\documentclass[english]{amsart}

\usepackage{babel}
\usepackage{amstext}
\usepackage{amsmath}
\usepackage{amsfonts}
\usepackage{latexsym}
\usepackage{amssymb}
\usepackage{ifthen}
\usepackage{xypic}
\xyoption{all}
\pagestyle{plain}
\usepackage{hyperref}

\newcommand{\KH}{{\mathbb H}}


\theoremstyle{plain}

\newtheorem{thm}{Theorem}[section]

\newtheorem{pro}[thm]{Proposition}

\newtheorem{cor}[thm]{Corollary}
\newtheorem{con}[thm]{Conjecture}

\theoremstyle{definition}

\newtheorem{dfn}[thm]{Definition}

\newtheorem{rem}[thm]{Remark}

\newtheorem{exa}[thm]{Example}

\theoremstyle{remark}

\newtheorem{que}[thm]{Question}

\newcommand{\Z}{\mathbb{Z}}

\newcommand{\C}{\mathbb{C}}
\newcommand{\R}{\mathbb{R}}
\newcommand{\Q}{\mathbb{Q}}
\newcommand{\PS}{\mathbb{P}}

\newcommand{\OO}{\mathcal{O}}


\DeclareMathOperator{\inte}{int}

\DeclareMathOperator{\im}{Im}

\DeclareMathOperator{\Supp}{Supp}



\DeclareMathOperator{\GL}{GL}
\DeclareMathOperator{\Hom}{Hom}

\DeclareMathOperator{\Aut}{Aut}
\DeclareMathOperator{\Herm}{Herm}
\DeclareMathOperator{\PHerm}{PHerm}
\DeclareMathOperator{\End}{End}
\DeclareMathOperator{\Bir}{Bir}


\DeclareMathOperator{\Pic}{Pic}

\DeclareMathOperator{\NEb}{\overline{\mathrm{NE}}}

\DeclareMathOperator{\Nef}{Nef}
\DeclareMathOperator{\Amp}{Amp}

\DeclareMathOperator{\Eff}{\mathrm{Eff}}
\DeclareMathOperator{\Mov}{Mov}
\DeclareMathOperator{\B}{Big}



\newcommand{\id}{{\rm id}}

\newcommand\sE{{\mathcal E}}

\newcommand\sF{{\mathcal F}}

\newcommand\sI{{\mathcal I}}

\newcommand\sO{{\mathcal O}}

\newcommand\bQ{{\mathbb Q}}
\newcommand\bN{{\mathbb N}}

\newcommand\bP{{\mathbb P}}

\title {The Morrison-Kawamata Cone Conjecture and Abundance on Ricci flat manifolds} 

\author{Vladimir Lazi\'c}
\address{Mathematisches Institut, Universit\"at Bonn, Endenicher Allee 60, 53115 Bonn, Germany}
\email{lazic@math.uni-bonn.de}

\author{Keiji Oguiso}
\address{Graduate School of Mathematical Sciences, University of Tokyo, Komaba, Meguro, Tokyo, 153-8914, Japan and Korea Institute for Advanced Study, Hoegiro 87, Seoul, 130-722, Korea}
\email{oguiso@ms.u-tokyo.ac.jp}

\author{Thomas Peternell}
\address{Mathematisches Institut, Universit\"at Bayreuth, 95440 Bayreuth, Germany}
\email{thomas.peternell@uni-bayreuth.de}

\thanks{All authors were partially supported by the DFG-Forschergruppe 790 ``Classification of Algebraic Surfaces and Compact Complex Manifolds". Lazi\'c was supported by the DFG-Emmy-Noether-Nachwuchsgruppe ``Gute Strukturen in der h\"oherdimensionalen birationalen Geometrie". Oguiso was supported by JSPS Grant-in-Aid (S) No 25220701, JSPS Grant-in-Aid (S) No 22224001, JSPS Grant-in-Aid (B) No 22340009, and by KIAS Scholar Program.}

\begin{document}

\begin{abstract}
The aim of this survey paper is threefold: (a) to discuss the status of the Morrison-Kawamata cone conjecture, (b) to report on recent developments towards the Abundance Conjecture, and (c) to discuss the nef line bundle version of the Abundance Conjecture on $K$-trivial varieties.
\end{abstract}

\maketitle
\setcounter{tocdepth}{2}

\tableofcontents 

\section{Introduction} 

Since the fundamental work of S.\ Mori starting in the late 1970's, the study of the closed cone of ample divisor -- often called the nef cone --  and its dual, the closed cone of curves of a projective manifold $X$, is one of the cornerstones of higher-dimensional complex algebraic geometry, in particular in connection with the canonical bundle $K_X$. By now traditionally, the cone of curves is denoted by $\NEb(X)$. The basic {\it Cone Theorem} says that the $K_X$-negative part of $\NEb(X)$ is locally rational polyhedral. Furthermore, ``extremal'' rational points on the boundary of the negative carry significant geometric information: 
they lead to morphisms $\varphi\colon X \to Y$ of the variety.  The Minimal Model Program then continues to study $Y$ instead of $X$, and eventually produces a fibre space with $K$-negative (Fano) fibres or a variety with nef canonical bundle. In that case the {\it Abundance Conjecture} predicts that some multiple of the canonical bundle is spanned by global sections. 
 
The $K_X$-trivial part of $\NEb(X)$ gets more complicated; we restrict our attention here to the case when the whole bundle $K_X$ is trivial, i.e.\ we consider Ricci-flat projective manifolds. In this case the Cone Theorem has a hypothetical counterpart, the {\it Morrison-Kawamata cone conjecture}. This conjecture basically says that there is a rational polyhedral cone which is a fundamental domain for the action of the automorphism group on the (not necessarily closed) cone of nef {\it effective} divisors. This conjecture is -- contrary to the $K_X$ negative case -- wide open, even in dimension three. Again, it is interesting to look at the boundary of the nef cone; one might even conjecture that the nef cone itself is locally rational polyhedral, so that the boundary contains many rational points. These rational points correspond to nef line bundles $L$ and a line bundle version of the Abundance Conjecture would say that a multiple of $L = K_X + L$ is spanned by global sections. 

The aim of this paper is therefore threefold:

\begin{itemize}
\item to discuss the status of the Morrison-Kawamata cone conjecture,
\item to report on recent developments towards the Abundance Conjecture,
\item to discuss the nef line bundle version of the Abundance Conjecture on $K$-trivial varieties.
\end{itemize} 

\section{Notation}\label{sec:notation}

Unless otherwise stated, we work in the category of projective varieties defined over $\C$. Let $X$ be a complex normal projective variety. As usual, let $N^1(X)$ be the N\'eron-Severi group generated by the classes of divisors, using numerical equivalence; let $N^1(X)_\Q = N^1(X) \otimes \Q$ and $N^1(X)_\R = N^1(X) \otimes \R$. Inside $N^1(X)_\R$ we have 
\begin{itemize} 
\item the  nef cone $\Nef(X)$, which is just the closure of the ample cone; 
\item the effective nef cone $\Nef^e(X) := \Nef(X) \cap \Eff(X)$  (not necessarily closed);
\item the movable cone $\overline{\Mov}(X)$; 
\item the effective cone $\Eff(X)$; and
\item the effective movable cone $\overline{\Mov}^e(X) := \overline{\Mov}(X) \cap \Eff(X)$.
\end{itemize}

Recall that a divisor $D$ is {\it movable} if for some positive number $m$ the linear system $\vert mD \vert$ has no components of codimension 1. A divisor $D$ whose class is in the closure of $\Eff(X)$ is called {\it pseudoeffective}.

\vskip .2cm 

The closed cone of curves will be denoted by $\NEb(X) \subseteq H_2(X,\R)$. For further notations in the context of the minimal model program we refer e.g.\ to \cite{KM98}. 

\vskip .2cm 

Suppose now that a finite-dimensional real vector space $V$ has a distinguished $\Q$-structure, i.e.\ there is a $\Q$-vector space $V_\Q$ such that $V = V_\Q \otimes_\Q \R$. Let $C \subseteq V$ be a convex cone. Then by definition, $C^+$ is the smallest convex cone in $V$ containing all the $\Q$-rational points of $\overline C$. In particular we can speak of $\Nef^+(X)$, $\Amp^+(X)$ and $\Mov^+(X)$. 

\vskip .2cm

The group of automorphisms of a normal projective variety will as usual denoted by $\Aut(X),$ whereas $\Bir(X)$ is the group of birational automorphisms. 

\vskip .2cm 

Given a nef divisor $L$, the {\it numerical dimension} $\nu(L) $ is given by 
$$ \nu(L) = {\rm max} \{k \ \vert  \ L^k \not\equiv  0 \}.$$

Furthermore, a locally free sheaf $\mathcal E$ is {\it nef}, if the line bundle $\sO_{\PS(\sE)}(1) $ is nef. 
 
\section{The Morrison-Kawamata Cone Conjecture} 
\subsection{Formulation and basics} 
 
First we fix some notation.
 
\begin{dfn} 
A projective or compact K\"ahler manifold is said to be Ricci-flat if $K_X \equiv 0$; equivalently, $c_1(X) = 0$ in $H^2(X,\R)$. 
\end{dfn} 
 
Due to Yau's solution of the Calabi conjecture when $c_1(X) = 0$, there exists indeed a K\"ahler metric on a compact K\"ahler manifold $X$ with $K_X \equiv 0$ whose Ricci curvature vanishes.  
 
We recall the following important Beauville-Bogomolov decomposition theorem: 
 
\begin{thm} \label{thm:Beauville} 
Any Ricci-flat compact K\"ahler (resp.\ projective) manifold $M$  has a finite \'etale Galois cover $\pi \colon \widetilde{M} \to M$ such that $\widetilde{M}$ is the product of Calabi-Yau manifolds, hyperk\"ahler manifolds (resp.\ projective hyperk\"ahler manifolds) and a complex torus (resp.\ a projective complex torus, i.e.\ an abelian variety).
\end{thm}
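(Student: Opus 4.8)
The plan is to deduce the decomposition from Riemannian geometry via Yau's theorem. First, since $K_M \equiv 0$, Yau's solution of the Calabi conjecture provides a Ricci-flat K\"ahler metric $g$ on $M$; pulling it back along the universal cover $p \colon \widehat M \to M$ yields a complete, simply connected, Ricci-flat K\"ahler manifold $(\widehat M, \widehat g)$.

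Next I would apply the de Rham decomposition theorem to $(\widehat M, \widehat g)$. It splits as an isometric and holomorphic product
$$\widehat M \;\cong\; \C^{k} \times N_1 \times \cdots \times N_r,$$
where each $N_i$ is simply connected with irreducible holonomy representation; by the Cheeger--Gromoll splitting theorem (or by compactness of the restricted holonomy group) the factors $N_i$ are compact K\"ahler and still Ricci-flat. Since a Ricci-flat K\"ahler metric has holonomy contained in $\mathrm{SU}$, Berger's classification of Riemannian holonomy groups forces the holonomy of each $N_i$ to be exactly $\mathrm{SU}(m_i)$, so that $N_i$ is a Calabi--Yau manifold, or exactly $\mathrm{Sp}(m_i/2)$, so that $N_i$ is a compact hyperk\"ahler manifold; the flat factor $\C^k$ is the universal cover of a complex torus.

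It remains to descend to a finite \'etale \emph{Galois} cover of $M$. The crucial point is that the isometry group of each irreducible factor $N_i$ is finite: by Bochner's theorem a Killing vector field on a compact Ricci-flat manifold is parallel, and an irreducible holonomy representation has no nonzero parallel vector field, so $\mathrm{Isom}(N_i)$ is discrete, hence finite. Now $\pi_1(M)$ acts on $\widehat M = \C^k \times \prod_i N_i$ by isometries, properly discontinuously and cocompactly; composing with the projections, its image in the finite group $\prod_i \mathrm{Isom}(N_i)$ is finite, while its image in $\mathrm{Isom}(\C^k)$ is, by Bieberbach's theorem, virtually a group of translations. Hence a suitable normal subgroup $\Gamma \trianglelefteq \pi_1(M)$ of finite index acts trivially on the $N_i$ and by translations on $\C^k$, and $\widetilde M := \widehat M/\Gamma \cong (\C^k/\Gamma) \times N_1 \times \cdots \times N_r \to M$ is the desired finite \'etale Galois cover. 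Finally, if $M$ is projective then so is the finite \'etale cover $\widetilde M$, and restricting an ample class to each factor of the product shows that the torus is an abelian variety and that the hyperk\"ahler factors are projective.

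The genuinely hard inputs---Yau's theorem, Berger's classification, and the Bochner technique---are quoted as black boxes here; among the remaining steps the one needing the most care is the descent, namely arranging that a finite-index normal subgroup of $\pi_1(M)$ preserves the de Rham decomposition and acts by translations on the flat factor.
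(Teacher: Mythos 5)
The paper states this as the classical Beauville--Bogomolov decomposition theorem and gives no proof, so there is nothing internal to compare against; your argument is, in substance, exactly Beauville's original proof (Yau, then the de~Rham splitting of the universal cover, Cheeger--Gromoll for compactness of the non-flat factors, Berger's list to force holonomy $\mathrm{SU}(m_i)$ or $\mathrm{Sp}(m_i/2)$, Bochner to kill Killing fields on the irreducible factors, and Bieberbach on the flat factor), and it is correct in outline. Three small points deserve care. First, your parenthetical alternative for compactness of the $N_i$ (``compactness of the restricted holonomy group'') is not a valid substitute for Cheeger--Gromoll: a compact holonomy group does not make the manifold compact. Second, the de~Rham decomposition is a priori only Riemannian; to get a \emph{holomorphic} product you must use that $J$ is parallel, hence compatible with the holonomy-invariant splitting of the tangent space, and to see that an $\mathrm{SU}(m_i)$-factor is Calabi--Yau in the paper's sense ($H^q(N_i,\OO_{N_i})=0$ for $0<q<m_i$) you need the Bochner principle identifying holomorphic forms with holonomy invariants. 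Third, before projecting $\pi_1(M)$ to $\prod_i \mathrm{Isom}(N_i)$ you should pass to a finite-index subgroup that does not permute mutually isometric irreducible factors (the de~Rham theorem only says isometries preserve the decomposition up to such permutations); after that your descent via Bieberbach, and the final projectivity argument by restricting an ample class, go through as written.
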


According to this theorem, the most important classes of projective Ricci flat manifolds are abelian varieties, projective hyperk\"ahler manifolds and Calabi-Yau manifolds. To be precise, a Calabi-Yau manifold $X$ of dimension $n$ in this context is a (simply connected) projective manifold with trivial canonical class such that $H^q(X,\OO_X) = 0$ for $1 \leq  q \leq n-1$.

\vskip .2cm 

The Morrison-Kawamata Cone Conjecture, shortly referered to as Cone Conjecture in the following, is stated as follows. 
 
\begin{con} \label{conj1}
Let $X$ be a projective Ricci flat manifold. 
\begin{enumerate}
\item There exists a rational polyhedral cone $\Pi$ which is a fundamental domain for the action of $\Aut(X)$ on $\Nef^e(X) = \Nef(X)\cap\Eff(X)$, in the sense that
$$\Nef^e(X)=\bigcup_{g\in\Aut(X)}g^*\Pi,$$ 
and $\inte\Pi\cap \inte g^*\Pi=\emptyset$ unless $g^*=\id$.
\item There exists a rational polyhedral cone $\Pi'$ which is a fundamental domain for the action of $\Bir(X)$ on $\overline{\Mov}^e(X) = \overline{\Mov}(X)\cap\Eff(X)$.
\end{enumerate}
\end{con}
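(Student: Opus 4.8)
The plan is to reduce the conjecture, via the Beauville--Bogomolov decomposition of Theorem~\ref{thm:Beauville}, to its three building blocks — abelian varieties, projective hyperk\"ahler manifolds and Calabi--Yau manifolds — and then to treat each block by combining arithmeticity of the relevant symmetry group with a general fundamental-domain lemma for arithmetic groups acting on Lorentzian-type cones.

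\emph{Step 1 (d\'evissage).} First I would show that both parts of the conjecture behave well under the operations occurring in Theorem~\ref{thm:Beauville}. If $\pi\colon\widetilde X\to X$ is a finite \'etale Galois cover with group $G$, pullback identifies $N^1(X)_\R$ with the invariants $N^1(\widetilde X)_\R^G$ and, after one checks the corresponding statement for cones, identifies $\Nef^e(X)$ and $\overline{\Mov}^e(X)$ with the $G$-invariant parts of the analogous cones upstairs; since $\Aut(X)$ and $\Bir(X)$ are commensurable with the normalisers of $G$ inside $\Aut(\widetilde X)$ and $\Bir(\widetilde X)$, the conjecture for $\widetilde X$ yields it for $X$. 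For a product $X_1\times X_2$ of manifolds with no common building block, the nef, effective and movable cones are products of the cones of the factors and the (birational) automorphism group is assembled from those of the factors, so the conjecture is inherited. This d\'evissage, though partly formal, needs some care; granting it, we are reduced to the three irreducible cases.

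\emph{Step 2 (the abelian and hyperk\"ahler cases).} For an abelian variety $X$ one has $\Bir(X)=\Aut(X)$, the cones $\Nef(X)$, $\overline{\Mov}(X)$, $\overline{\Eff}(X)$ coincide, and $\Aut(X)$ acts on $N^1(X)$ through an arithmetic group; here the conjecture is classical and follows from the fundamental-domain lemma recalled below. For a projective hyperk\"ahler manifold $X$ the Beauville--Bogomolov form makes $N^1(X)$ a lattice of signature $(1,\rho-1)$, Markman's Hodge-theoretic Torelli theorem shows that $\Bir(X)$ (resp.\ $\Aut(X)$) acts through an arithmetic group, and the interior of $\overline{\Mov}(X)$ carries a locally finite wall-and-chamber decomposition cut out by reflections in a monodromy-invariant set of classes — the MBM classes of Amerik--Verbitsky, made explicit by Bayer--Hassett--Tschinkel in the $K3^{[n]}$- and generalised-Kummer-types — the closure of one chamber being the nef cone, with $\Bir(X)$ permuting the chambers and $\Aut(X)$ stabilising the nef one; the case $\dim X=2$ is Sterk's theorem for $K3$ surfaces. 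In both situations the argument is completed by the general lemma (Looijenga; cf.\ the versions of Kawamata, Totaro and Amerik--Verbitsky): an arithmetic group acting on the positive cone $\mathcal C$ of a Lorentzian lattice admits a rational polyhedral fundamental domain on the subcone $\mathcal C^+$ spanned by the rational points of $\overline{\mathcal C}$ — effectivity being precisely what allows one to pass to $\mathcal C^+$ and discard the irrational boundary — applied, in the hyperk\"ahler case, chamber by chamber.

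\emph{Step 3 (the Calabi--Yau case, the obstacle).} Here the strategy stops, and this is where the whole difficulty lies. For a Calabi--Yau manifold of dimension $\ge 3$ there is at present no Torelli theorem, no reason known for $\Aut(X)$ or $\Bir(X)$ to be arithmetic in $\GL(N^1(X))$, no quadratic form controlling $\Nef(X)$, and no structural description of the walls of $\overline{\Mov}(X)$; even local rational polyhedrality of $\Nef^e(X)$ is part of the conjecture. The robust general inputs are only the chamber structure on $\Mov^+(X)$ produced by the minimal model program together with the Kawamata--Morrison equivalence of part~(2) with the finiteness, up to isomorphism, of minimal models and of fibre-space structures. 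So the genuine problem — indeed the full content of the conjecture in this case — is to obtain enough arithmetic control on $\Aut(X)$ acting on $N^1(X)$, and a locally finite wall structure for $\overline{\Mov}^e(X)$, to run an argument parallel to Step~2; beyond small Picard number and special constructions, no mechanism for this is currently known.
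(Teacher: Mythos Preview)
The statement you are addressing is a \emph{conjecture}, not a theorem: the paper does not prove it, but surveys what is known. So there is no ``paper's own proof'' to compare against, and your proposal is better read as a summary of the expected architecture of a proof together with an honest identification of the main obstruction. In that sense Step~3 is accurate: the Calabi--Yau case is genuinely open, and your description of why is correct.

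That said, even as a strategy there are gaps and inaccuracies in Steps~1 and~2 worth flagging. First, the d\'evissage in Step~1 is not established and is not ``partly formal'': the compatibility of $\Nef^e$ and $\overline{\Mov}^e$ with finite \'etale Galois covers and with products is delicate (effectivity and the movable condition do not obviously descend or split), and the commensurability claims for $\Aut$ and $\Bir$ under such covers require argument; the paper does not claim this reduction, and the literature typically treats the three building blocks separately rather than deducing the general Ricci-flat case from them. Second, the abelian case is not ``classical'': it was proved by Prendergast-Smith (Theorem~\ref{thm:Smith}) only in 2012, and crucially uses that $\Amp(A)$ is a product of cones of positive hermitian matrices --- a \emph{self-dual homogeneous} cone, not a Lorentzian one --- together with the Vinberg--Ash--Looijenga theorem (Theorem~\ref{thm:Ash}); your ``Lorentzian fundamental-domain lemma'' does not cover this case. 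Third, for hyperk\"ahler manifolds the paper records only the \emph{movable} Cone Conjecture in general (Markman, Theorem~\ref{thm:SterkMarkman}(2), for $\Mov^+$ rather than $\overline{\Mov}^e$, cf.\ Remark~\ref{rem:TorelliHK}); the nef Cone Conjecture is known only for the $K3^{[n]}$- and generalised-Kummer-types (Theorem~\ref{thm:deformHilb}) and, by Amerik--Verbitsky, for $b_2\neq5$ --- not in full generality as your Step~2 suggests.
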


There is also the following weaker form.
 
\begin{con}
Let $X$ be a projective Ricci-flat manifold. \label{conj2} 
\begin{enumerate}
\item There exists a (not necessarily closed) cone $\Pi$ which is a weak fundamental domain for the action of $\Aut(X)$ on $\Nef^e(X)$, in the sense that 
$$\Nef^e(X) =\bigcup_{g\in\Aut(X)}g^*\Pi,$$
$\inte\Pi\cap \inte g^*\Pi=\emptyset$ unless $g^*=\id$, and for every $g\in\Aut(X)$, the intersection $\Pi\cap g^*\Pi$ is contained in a rational hyperplane.
\item There exists a polyhedral cone $\Pi'$ which is a weak fundamental domain for the action of $\Bir(X)$ on $\overline{\Mov}^e(X)$.
\end{enumerate}
\end{con}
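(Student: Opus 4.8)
The plan is to deduce Conjecture~\ref{conj2} directly from Conjecture~\ref{conj1}. The latter is explicitly the strong form of the statement, and a rational polyhedral fundamental domain is in particular a polyhedral one, so I expect there is nothing to prove beyond unwinding the definitions: assuming Conjecture~\ref{conj1} holds for $X$, I would show that the very same cones serve as weak fundamental domains in the sense of Conjecture~\ref{conj2}.

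For part~(1), let $\Pi$ be the rational polyhedral cone furnished by Conjecture~\ref{conj1}(1). The equality $\Nef^e(X)=\bigcup_{g\in\Aut(X)}g^*\Pi$ and the condition $\inte\Pi\cap\inte g^*\Pi=\emptyset$ for $g^*\neq\id$ are inherited verbatim, so the only additional requirement of Conjecture~\ref{conj2}(1) left to check is that $\Pi\cap g^*\Pi$ lies in a rational hyperplane for every $g\in\Aut(X)$. Since $\Aut(X)$ acts by $\Q$-linear automorphisms of $N^1(X)_\Q$, the cone $g^*\Pi$ is again rational polyhedral, and hence so is $\Pi\cap g^*\Pi$. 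If $g^*\neq\id$ then, since $\Nef^e(X)$ is full-dimensional (it contains the ample cone) and therefore so is $\Pi$, the relation $\inte\Pi\cap\inte g^*\Pi=\emptyset$ forces $\Pi\cap g^*\Pi$ to have empty interior in $N^1(X)_\R$; a rational polyhedral cone with empty interior spans a proper rational subspace of $N^1(X)_\R$, which we enlarge to a rational hyperplane. For $g^*=\id$ the condition is vacuous. Thus $\Pi$ is a weak fundamental domain for $\Aut(X)$ on $\Nef^e(X)$.

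Part~(2) is entirely analogous: the rational polyhedral cone $\Pi'$ of Conjecture~\ref{conj1}(2) is in particular polyhedral, the group $\Bir(X)$ acts $\Q$-linearly on $N^1(X)_\Q$ preserving $\overline{\Mov}^e(X)$, and since $\overline{\Mov}^e(X)$ is again full-dimensional the same emptiness-of-interior argument shows that $\Pi'\cap g^*\Pi'$ lies in a rational hyperplane for each $g\in\Bir(X)$. I do not expect a genuine obstacle, since the implication is soft; the one point that deserves a line of justification is the rationality of the separating hyperplane, which comes down to the elementary fact that two rational polyhedral cones with disjoint interiors can be separated by a rational hyperplane — equivalently, a rational Farkas / linear-programming-duality argument, or simply the observation that the linear span of a rational polyhedral cone is defined over $\Q$. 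Everything else is a transcription of the hypotheses of Conjecture~\ref{conj1}.
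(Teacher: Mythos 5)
The decisive problem is not with any individual step but with the overall logical status of what you have written: the statement you were asked about is Conjecture~\ref{conj2}, and the paper offers no proof of it --- it is presented precisely as ``the following weaker form'' of the Morrison--Kawamata Cone Conjecture~\ref{conj1}, which the paper explicitly describes as wide open even in dimension three (it is known only in special cases: abelian varieties, K3 surfaces, certain hyperk\"ahler manifolds, and some Calabi--Yau examples with small Picard number). Your argument takes Conjecture~\ref{conj1} as a hypothesis and deduces Conjecture~\ref{conj2} from it. That implication is correct and is exactly why the authors call \ref{conj2} the weaker form, but it is not a proof of the statement: you have reduced an open conjecture to a strictly stronger open conjecture. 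The entire point of formulating the weak version separately is the hope that it might be attacked \emph{without} first establishing the strong version (e.g.\ by allowing non-polyhedral $\Pi$), so a derivation that presupposes the strong version has no unconditional content.

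Within the conditional argument itself the reasoning is essentially sound: a rational polyhedral cone is polyhedral, $g^*$ acts by an integral linear automorphism of $N^1(X)$, so $\Pi\cap g^*\Pi$ is again rational polyhedral, and if $g^*\neq\id$ the disjointness of interiors (together with the full-dimensionality of $\Pi$, which follows since the full-dimensional cone $\Nef^e(X)$ is covered by the countably many translates $g^*\Pi$) forces $\Pi\cap g^*\Pi$ to span a proper rational subspace, hence to lie in a rational hyperplane. One small inaccuracy: for $g^*=\id$ the condition is not ``vacuous'' --- $\Pi\cap\Pi=\Pi$ is full-dimensional and lies in no hyperplane --- so the rational-hyperplane clause in Conjecture~\ref{conj2}(1) must be read as applying only to $g$ with $g^*\neq\id$; this is a defect of the phrasing rather than of your argument, but it should be flagged rather than waved away. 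If you want to say something with actual content about this conjecture, the route the paper takes is to verify it (in fact the strong form) in the known cases via Theorem~\ref{thm:Ash} and the Torelli-type statements, not to relate the two forms to each other.
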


Sometimes versions of the Morrison-Kawamata Cone Conjecture with slightly different cones will be considered, such as $\Nef^+(X)$ instead of $\Nef^{e}(X)$. 

\subsection{Abelian varieties} 

Let $A$ be an abelian variety. As an abelian variety has no rational curves, we have $\Bir(A) = \Aut(A)$ and $\overline{\Mov}^e(A) = \Nef^e(A)$. Therefore the Cone Conjectures 3.2 (1) and (2) coincide for an abelian variety. The Cone Conjecture for an abelian variety is actually highly non-trivial, as $\Nef (A)$ looks quite circular. 

We have the following very satisfactory answer for the cone conjecture for abelian varieties:

\begin{thm} (Prendergast-Smith \cite{PS12}) \label{thm:Smith} 
The Cone Conjecture is true for any abelian variety.  
\end{thm}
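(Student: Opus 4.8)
The plan is to realize the nef cone of an abelian variety $A$ as a "homogeneous self-dual cone"–type object and then invoke the theory of arithmetic groups acting on such cones. First I would reduce to the case where $A$ is simple, or more precisely decompose $A$ up to isogeny as a product of powers of simple abelian varieties, $A \sim \prod A_i^{n_i}$, and observe that the Néron–Severi group, the nef cone and the action of $\Aut(A)$ (equivalently, the automorphisms of $\NS(A)$ preserving the cone and compatible with the polarization data) all decompose accordingly; so it suffices to treat each factor $A_i^{n_i}$. For such a factor the key classical input is the identification of $\NS(A_i^{n_i})_\R$ with a space of Hermitian matrices over the division algebra $D_i = \End^0(A_i)$ (with its Rosati involution), under which $\Nef(A_i^{n_i})$ corresponds to the cone of positive semidefinite Hermitian matrices $\PHerm$, a closed convex cone whose interior (the ample cone) is the cone of positive definite ones. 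This puts us exactly in the setting of the five types of symmetric cones classified by the Albert classification of endomorphism algebras.

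Next I would bring in the group action. The automorphism group $\Aut(A)$ acts on $\NS(A)_\R$ through an arithmetic subgroup $\Gamma$ of the real reductive group $G$ preserving $\PHerm$; concretely $\Gamma$ is commensurable with $\GL_n(\OO_{D_i})$ (unitary group of the relevant Hermitian form, depending on the Albert type), acting by $M \mapsto g^* M g$. The cone $\PHerm$ is a homogeneous self-dual cone, and the crucial structural fact — essentially due to Ash–Mumford–Rapoport–Tai and, in the form needed here, to Looijenga — is that for an arithmetic group acting on the cone of positive semidefinite forms, the \emph{rational closure} $C^+$ (the convex hull of the rational points of $\overline C$, which is the union of the open cone and its rational boundary faces, exactly the locus realized by $\Nef^e$) admits a rational polyhedral fundamental domain. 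So the strategy is: (i) show $\Nef^e(A)$ is precisely the rational closure $\Nef^+(A)$ of the ample cone inside this Hermitian-matrix picture — here one uses that a nef class on an abelian variety is effective if and only if it is a limit of ample classes along rational directions, which follows from semiampleness/abundance results available on abelian varieties; and (ii) quote the existence of a rational polyhedral fundamental domain for $\Gamma$ acting on $C^+$.

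The main obstacle I expect is step (i) together with the need to verify that the arithmetic group occurring is genuinely the full relevant unitary group (or at least finite index in it), so that the general fundamental-domain theorem applies verbatim. Matching $\Aut(A)$ with the arithmetic group requires care because not every lattice automorphism of $\NS(A)$ preserving the cone need lift to an automorphism of $A$; one must check that enough of them do, using the fact that for products of a fixed simple abelian variety the Rosati-positive units of the endomorphism algebra are realized geometrically, and handling the exceptional Albert types (in particular type III and the quaternionic cases) where the associated symmetric cone is the cone of positive \emph{quaternionic} Hermitian matrices and the arithmeticity statement is slightly more delicate. Once these identifications are in place, the two assertions of Conjecture \ref{conj1} follow simultaneously, since for an abelian variety $\Bir(A)=\Aut(A)$ and $\overline{\Mov}^e(A)=\Nef^e(A)$ as already noted, so a single fundamental domain $\Pi=\Pi'$ does the job.
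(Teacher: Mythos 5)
Your proposal follows essentially the same route as the paper: decompose $A$ up to isogeny via Poincar\'e reducibility and Albert's classification, identify $N^1(A)_\R$ with Hermitian matrices over the (real) endomorphism algebras so that the ample cone becomes the self-dual homogeneous cone of positive definite ones, observe that $\Aut(A)=\End(A)^\times$ acts as an arithmetic subgroup, and invoke the Vinberg--Ash--Looijenga rational polyhedral fundamental domain theorem on $C^+$ (your worry about matching $\Aut(A)$ with a full unitary group is resolved exactly as you suspect, since the theorem applies to any arithmetic subgroup and the unit group of the order $\End(A)$ is one). The argument is correct and matches the paper's proof in all essentials.
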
 

The most crucial part of the proof is the following structure Theorem \ref{thm:Ash} of the homogeneous self-dual cone, which we now explain. 

Let $C$ be a strict open convex cone in a finite dimensional real vector space $V$. $C$ is called {\it homogeneous} if the linear subgroup 
$$\Aut(C) := \{g \in \GL(V, \R)\mid g(C) = C \}$$
acts transitively on $C$. 

Let $V^*$ be the dual vector space of $V$. The dual cone $C^* \subseteq V^*$ is the interior of 
$$\{\ell \in V^*\mid \ell(v) \ge 0 \text{ for all } v \in V\}.$$
The cone $C$ is called  {\it self-dual} if $C^* = C$ under the identification $V = V^*$ with respect to {\it some} non-degenerate linear form $b(\cdot\,,\cdot)$ on $V$. 

\begin{exa} \label{exa:selfdual1} 
(1)  Let $K = \R$, $\C$ or $\KH$, where $\KH$ is the division ring of quaternions. Let $V:= V_g$ be the real vector space consisting of $g \times g$ hermitian matrices with entries in $K$ and let $C_g$ be the cone consisting of positive definite hermitian matrices in $V_g$. 
Then $C:= C_g$ is a self-dual homogeneous cone with respect to the bilinear form $b(A, B) := {\rm tr}(AB^*)$. Here $B^* = (\overline{b_{ji}})$ when $B = (b_{ij})$, i.e.\ the conjugate in $K$ of the transpose of $B$. 

\vskip .2cm \noindent 
(2) Let $V := \R^{1, n}$ be the $(n+1)$-dimensional real vector space equipped with the bilinear form defined by 
$$b((x_i)_{i=0}^{n}, (y_i)_{i=0}^{n}) = x_0y_0 -(x_1y_1 + \dots +x_ny_n).$$ 
Then the cone  
$$C := C_n := \Big\{(x_i)_{i=0}^{n}\mid x_0 > \sqrt{x_1^2 + \dots +x_n^2}\Big\}$$
is a self-dual homogeneous cone with respect to this bilinear form.
\end{exa}

Note that the vector spaces $V$ in these examples has a $\Q$-structure. Thus we may consider the cone $C^+$ introduced in Section \ref{sec:notation}. 

The following group-theoretic theorem due to Vinberg, Ash and Looijenga (see references in \cite{PS12}) is the core in the proof of Theorem \ref{thm:Smith}.

\begin{thm} \label{thm:Ash} 
Let $C$ be a self-dual homogeneous cone. Then the automorphism group $\Aut(C)$ is the group $G(\R)$ of real points of a reductive algebraic group $G$. If the identity component of $G$ is defined over $\Q$ (which holds in the examples above), then for any arithmetic subgroup $\Gamma$ of $G$ there is a rational polyhedral fundamental domain $\Delta \subseteq C^+$ for the action of $\Gamma$ on $C^+$. 
\end{thm}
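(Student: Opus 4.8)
The plan is to prove the two assertions in turn, in each case reducing to a thoroughly understood model by means of the structure theory of self-dual homogeneous cones.

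For the first assertion — that $\Aut(C)$ is the real locus of a reductive algebraic group $G$ — I would use the Koecher--Vinberg correspondence between self-dual homogeneous cones and Euclidean (formally real) Jordan algebras. Fix a base point $e\in C$ and use the form $b(\cdot,\cdot)$ to identify $V$ with $V^{*}$; self-duality then equips $V$ with the structure of a Euclidean Jordan algebra with unit $e$ whose open cone of squares is exactly $C$. The identity component of $\Aut(C)$ coincides with that of the structure group $\mathrm{Str}(V)=\{g\in\GL(V)\mid \Delta(gx)\in\R_{>0}\,\Delta(x)\ \text{for all }x\}$, where $\Delta$ is the Jordan norm, a polynomial on $V$; since preserving a polynomial up to a scalar is a Zariski-closed condition, $\mathrm{Str}(V)$ — and hence $\Aut(C)$, which is the full linear stabilizer of $C$ — is the group of real points of an algebraic group $G$. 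Reductivity then comes from the invariant geometry: the characteristic function $\varphi(x)=\int_{C^{*}}e^{-b(x,\xi)}\,d\xi$ is, on a self-dual cone, a negative rational power of $\Delta$ up to a constant, it is $\Aut(C)$-semi-invariant and logarithmically convex, so $\nabla^{2}\log\varphi$ is an $\Aut(C)$-invariant Riemannian metric turning $C$ into a Riemannian symmetric space; its isometry group is reductive, and one checks $\Aut(C)^{\circ}\cong\R_{>0}\times G_{\mathrm{ss}}$ with $G_{\mathrm{ss}}$ semisimple.

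For the second assertion I would first normalize the $\Q$-structure on $V$ so that $e$ and the Jordan multiplication are rational (so that $G^{\circ}$ is defined over $\Q$), fix an arithmetic subgroup $\Gamma\subseteq G$, and invoke Vinberg's classification: every self-dual homogeneous cone is a finite product of irreducible ones, namely the cones of positive-definite hermitian $g\times g$ matrices over $\R$, $\C$ and $\KH$, the $27$-dimensional exceptional cone over the octonions, and the Lorentzian cones of Example~\ref{exa:selfdual1}(2); since the statement and its hypotheses pass to products it suffices to treat each irreducible case. The model case where $C$ is the cone of positive-definite real symmetric matrices and $\Gamma=\GL_{g}(\Z)$ is classical Minkowski reduction theory: the Minkowski-reduced forms are cut out inside the rational closure $C^{+}$ (positive-semidefinite forms with rational kernel) by finitely many linear inequalities with integer coefficients, so they constitute a rational polyhedral cone that is a fundamental domain. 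For the remaining cases I would appeal to the polyhedral reduction theory of Ash (as developed in Ash--Mumford--Rapoport--Tai) and Looijenga: one produces a $\Gamma$-invariant, locally finite decomposition of $C^{+}$ into rational polyhedral cones with only finitely many $\Gamma$-orbits — via a Voronoi-type construction anchored at ``perfect'' rational points, or by $\Gamma$-saturating the polyhedral hull of a suitably chosen finite integral configuration — and then glues a finite set of orbit representatives into a single rational polyhedral domain $\Delta$, subdividing as needed so that $\inte\Delta$ meets no nontrivial $\Gamma$-translate of $\Delta$.

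The main obstacle is the second assertion, and within it the construction and the \emph{local finiteness} of a $\Gamma$-invariant rational polyhedral decomposition of $C^{+}$ with finitely many orbits: this is the genuine content of Ash's reduction theory and is not a formal consequence of the symmetric-space structure. One must in particular control the accumulation of cells along the rational boundary $\partial C^{+}\setminus C$ and ensure that the orbit representatives can be organized into one honest rational polyhedral cone rather than merely a locally finite union of such. By contrast the first assertion is structural once the Jordan-algebra dictionary and the characteristic function are in place; the only minor care there is to identify $\Aut(C)$ with all of $G(\R)$ rather than a proper finite-index subgroup, which follows since $G$ has finitely many real connected components and $\Aut(C)$ is the full linear stabilizer of $C$.
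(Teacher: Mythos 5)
You should first be aware that the paper does not prove Theorem \ref{thm:Ash} at all: it is quoted as a known result of Vinberg, Ash and Looijenga, with references deferred to \cite{PS12}. So the only meaningful question is whether your sketch would stand on its own. The first half does: the Koecher--Vinberg dictionary, the identification of $\Aut(C)^{\circ}$ with the identity component of the structure group via the Jordan norm and the characteristic function, and the symmetric-space argument for reductivity are the standard and correct route to the first assertion, and you correctly flag the one delicate point (that $\Aut(C)$ is all of $G(\R)$ and not a finite-index subgroup).

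The second half, however, has two genuine problems. First, the reduction to irreducible factors via Vinberg's classification does not respect the hypotheses: an arithmetic subgroup $\Gamma$ of a $\Q$-form of $G$ need not be commensurable with a product of arithmetic subgroups of the irreducible real factors. This is not a corner case but the typical situation in the intended application: for an abelian variety, $\End_\Q(A_i)$ can be a division algebra over a number field, and restriction of scalars produces a $\Q$-simple group whose real points act on a \emph{product} of irreducible cones, with $\Gamma$ (a Hilbert-modular-type group) mixing the factors irreducibly. Any proof must therefore work directly with Siegel sets for the $\Q$-group $G$, as Ash's does, rather than factor by factor. Second, for everything beyond Minkowski reduction for $\GL_g(\Z)$ you ``appeal to the polyhedral reduction theory of Ash \dots and Looijenga,'' i.e.\ you cite the statement being proved. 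The real content --- that a Siegel set is contained in finitely many $\Gamma$-translates of a rational polyhedral cone spanned by points of $\overline{C}\cap V_\Q$, and conversely that such a cone is covered by finitely many Siegel sets, followed by the exact-fundamental-domain refinement --- is exactly the part your sketch does not supply, and you candidly say so. As written, the proposal is a correct map of the literature for assertion one and an honest placeholder, not a proof, for assertion two.
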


By the Poincar\'e complete reducibility theorem, the abelian variety $A$ under discussion is isogenous to the product of  {\it simple} abelian varieties $A_i$, say $A$ is isogenous to 
$$A_1^{n_1} \times \ldots \times A_k^{n_k},$$ 
where $A_i$ are simple abelian varieties such that $A_i$ and $A_j$ are not isogenous. Then 
$$\End_\Q(A) := \End(A) \otimes_\Z\Q \simeq M_{n_1}\big(\End_\Q(A_1)\big) \times \dots \times M_{n_k}\big(\End_\Q(A_k)\big).$$
Here $\End_\Q(A_i)$ are finite dimensional division algebras over $\Q$ (as $A_i$ are simple), and $M_{n_i}\big(\End_\Q(A_i)\big)$ is the ring of $r \times r$ matrices with entries in $\End_\Q(A_i)$. Hence
$$\End_\R(A) := \End_\Q(A) \otimes_\Q\R \simeq \prod_i M_{r_i}(\R) \times \prod_j M_{s_j}(\C) \times \prod_k M_{t_k}(\KH)$$
by Albert's classification of finite dimensional $\Q$-division algebras with positive involution. Here the positive involution on the left hand side, the Rosati involution, corresponds to the conjugate transpose on the right hand side under the above isomorphism. 

The essential idea of the proof of Theorem \ref{thm:Smith} is to describe $\Aut(A)$ (resp.\ $N^1(A)$ and $\Amp(A)$) using the action on (resp.\ the subspaces of) the space $\prod_i M_{r_i}(\R) \times \prod_j M_{s_j}(\C) \times \prod_k M_{t_k}(\KH)$ to reduce the proof to Theorem \ref{thm:Ash}. This can be done as follows.  

Note that $\Aut(A) = \End(A)^\times$. Then, by definition, $\Aut(A)$ is an arithmetic subgroup of the algebraic group $\End_\R(A)^\times$ which is defined over $\Q$. 

Recall the following natural group homomorphism:
$$\Phi \colon \Pic(A) \to \Hom(A, \Pic^0(A)),\quad D \mapsto (x \mapsto t_x^*(D) \otimes D^{-1}),$$
where $t_x$ is the translation by $x \in A$. Choose any ample line bundle $L$ on $A$. It is a classical fact that $\Phi(L)$ is an isogeny from $A$ to ${\rm Pic}^0(A)$ and $\Phi(D) = 0$ in 
$$\Hom(A, \Pic^0(A)) \otimes_\Z \Q = \End_\Q(A)$$ 
exactly when $D \in \Pic^0(A)$. As $\Phi(L)$ is an isogeny, the inverse map $\Phi(L)^{-1}$ is also well-defined in $\End_\Q(A)$. Then we have a natural group homomorphism
$$\Pic(A) \to \End_\Q(A) \subseteq \End_\R(A),\quad D \mapsto \Phi(L)^{-1}\Phi(D),$$
which descends to the {\it injective} group homomorphism
$$\rho \colon N^1(A)  \to \End_\R(A) \simeq \prod_i M_{r_i}(\R) \times \prod_j M_{s_j}(\C) \times \prod_k M_{t_k}(\KH).$$
As one can easily guess (and prove by looking at the Rosati involution), we have the following expected descriptions of $N^1(A)$ and $\Amp(A)$ in $\prod_i M_{r_i}(\R) \times \prod_j M_{s_j}(\C) \times \prod_k M_{t_k}(\KH)$:
\begin{align*}
\rho\big(N^1(A)\big) &= \prod_i \Herm_{r_i}(\R) \times \prod_j \Herm_{s_j}(\C) \times \prod_k \Herm_{t_k}(\KH),\\
\rho\big(\Amp(A)\big) &= \prod_i \PHerm_{r_i}(\R) \times \prod_j \PHerm_{s_j}(\C) \times \prod_k \PHerm_{t_k}(\KH).
\end{align*}
Here ${\rm Herm}_{r}(K)$ is the space of $r \times r$ hermitian matrices and ${\rm PHerm}_{r_i}(K)$ is the set of positive $r \times r$ hermitian matrices. 

Now we can apply Theorem \ref{thm:Ash} to conclude. 

\subsection{Hyperk\"ahler manifolds}

Let $S$ be a projective K3 surface. Then $S$ is nothing but a $2$-dimensional projective hyperk\"ahler manifold. Even though a K3 surface $S$ often admits (infinitely many) smooth rational curves, one has 
$$\Aut(S) = \Bir(S),\quad \overline{\Mov}^e(S) = \overline{\Mov}^+(S) = \Nef^e(S) = \Nef^+(S),$$
as $S$ is a minimal surface such that any rational nef divisor is semiample. So, for projective K3 surfaces the two cone conjectures again coincide and can be formulated in terms of $\Nef^+(S)$. 

We have also the following fairly satisfactory answer due to Sterk for projective K3 surfaces and to Markman for projective hyperk\"ahler manifolds:

\begin{thm} \cite{St85,Mk11} \label{thm:SterkMarkman} 
\begin{enumerate}
\item The Cone Conjecture holds for any projective K3 surface $S$.
\item The movable Cone Conjecture for $\Mov^+(X)$ holds for any projective hyperk\"ahler manifold $X$. 
\end{enumerate}
\end{thm}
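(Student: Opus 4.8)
The plan is to deduce both assertions from one scheme, in which Example~\ref{exa:selfdual1}(2) and Theorem~\ref{thm:Ash} are the crucial inputs. Write $N:=\NS(S)$ with the intersection form, resp.\ $N:=\NS(X)$ with the Beauville--Bogomolov--Fujiki form $q$; in both cases $(N,q)$ is a lattice of signature $(1,\rho-1)$, where $\rho$ is the Picard number (Hodge index theorem, resp.\ the signature of the Beauville--Bogomolov--Fujiki form), and we let $\mathcal{C}\subseteq N_\R$ be the connected component of $\{v\mid q(v)>0\}$ containing the ample classes. As a cone, $\mathcal{C}$ is linearly isomorphic to the self-dual homogeneous cone $C_{\rho-1}$ of Example~\ref{exa:selfdual1}(2); the group $\mathrm{O}^+(N)$ of lattice isometries preserving $\mathcal{C}$ is an arithmetic subgroup of $\Aut(\mathcal{C})$, so Theorem~\ref{thm:Ash} produces a rational polyhedral fundamental domain inside $\mathcal{C}^+$ for the action of any arithmetic subgroup of $\Aut(\mathcal{C})$. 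First one identifies the cones in question. For a projective K3 surface one has $\Nef^e(S)=\Nef^+(S)$, since every rational nef class on $S$ is semiample, hence effective, while an irrational nef class lying on $\partial\mathcal{C}$ is never effective; for a projective hyperk\"ahler manifold we work directly with $\Mov^+(X)$ as in the statement. In both cases the relevant cone lies in $\mathcal{C}^+$.

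Next I would bring in a reflection group $W\subseteq\mathrm{O}^+(N)$ acting on $\mathcal{C}$. For a K3 surface this is the Weyl group $W_S$ generated by the reflections $s_\delta$ in the classes $\delta$ of smooth rational curves; the basic geometric facts are that $\Nef^+(S)$ is a fundamental domain for $W_S$ on $\mathcal{C}^+$ and that $W_S$ acts simply transitively on the chambers cut out in $\mathcal{C}$ by the hyperplanes $\delta^\perp$, $\delta^2=-2$ --- a set that is stable under $\mathrm{O}^+(N)$, being purely lattice-theoretic. For a projective hyperk\"ahler manifold the analogue is Markman's description of the movable cone: the group $W_{\mathrm{exc}}$ generated by the reflections in the classes of prime exceptional divisors, which have negative $q$-square and define integral isometries (in fact monodromy operators), acts simply transitively on the corresponding chambers, and $\Mov^+(X)$ is a fundamental domain for it on $\mathcal{C}^+$.

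The crux, and the step I expect to be the main obstacle, is to identify the geometric group with an arithmetic one. For $S$, the strong Torelli theorem shows that $\Aut(S)\to\mathrm{O}(N)$ has finite kernel and that its image $\Aut(S)^*$ has finite index in the stabilizer $\mathrm{O}^+(N)_{\Nef(S)}$; combined with the simple transitivity of $W_S$ on chambers this gives the semidirect product $\mathrm{O}^+(N)=W_S\rtimes\mathrm{O}^+(N)_{\Nef(S)}$, so that $\Gamma:=W_S\rtimes\Aut(S)^*$ is a finite-index, hence arithmetic, subgroup of $\mathrm{O}^+(N)$. For a hyperk\"ahler $X$ one runs the same argument with $\mathrm{O}^+(N)$ replaced by the arithmetic Hodge monodromy group $\mathrm{Mon}^2_{\mathrm{Hdg}}(X)$: Verbitsky's global Torelli theorem together with Markman's analysis of $\mathrm{Mon}^2(X)$ and of prime exceptional classes shows that $\Bir(X)\to\mathrm{O}(N)$ has finite kernel, that $\Bir(X)^*$ has finite index in the stabilizer of $\Mov(X)$ in $\mathrm{Mon}^2_{\mathrm{Hdg}}(X)$, and that $\Gamma:=W_{\mathrm{exc}}\rtimes\Bir(X)^*$ is again arithmetic. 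For K3 surfaces this Torelli-type step is classical, but in the hyperk\"ahler case it rests on the full strength of Verbitsky's theorem and of Markman's monodromy and prime-exceptional-divisor theory; this is where the real work of~(2) lies.

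Finally, I would conclude by applying Theorem~\ref{thm:Ash} to the arithmetic group $\Gamma$ acting on $\mathcal{C}^+$, which yields a rational polyhedral fundamental domain $\Delta\subseteq\mathcal{C}^+$. Because the reflection subgroup $W\subseteq\Gamma$ has $\Nef^+(S)$, resp.\ $\Mov^+(X)$, as a fundamental domain on $\mathcal{C}^+$, we may take $\Delta$ inside that cone; equivalently, $\mathcal{C}^+/W$ is modelled by $\Nef^+(S)$, resp.\ $\Mov^+(X)$, and quotienting further by $\Aut(S)^*$, resp.\ $\Bir(X)^*$, identifies $\Delta$ --- modulo the usual care about boundary faces --- with a rational polyhedral fundamental domain for that group acting on $\Nef^+(S)$, resp.\ $\Mov^+(X)$. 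Since for a projective K3 surface $\Aut(S)=\Bir(S)$ and $\overline{\Mov}^e(S)=\Nef^e(S)=\Nef^+(S)$, this gives Conjecture~\ref{conj1} for $S$ in all its formulations, and it gives Conjecture~\ref{conj1}(2) for $\Mov^+(X)$. The cases $\rho=1,2$, where $\mathcal{C}$ is a ray or a two-dimensional cone, are immediate.
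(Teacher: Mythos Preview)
Your proposal is correct and follows essentially the same route as the paper: reduce to Theorem~\ref{thm:Ash} applied to the positive cone of signature $(1,\rho-1)$, using the Torelli-type input (Theorem~\ref{thm:TorelliK3} for K3 surfaces, Theorem~\ref{thm:TorelliHK} for hyperk\"ahler manifolds) to exhibit an arithmetic group $\Gamma$ that splits as a semi-direct product of the reflection group $W$ and (a finite-index subgroup of) $\Aut(S)^*$ resp.\ $\Bir(X)^*$, and then push the Ash fundamental domain into $\Nef^+(S)$ resp.\ $\Mov^+(X)$ via the $W$-chamber structure. The only cosmetic difference is that the paper phrases the semi-direct product as $\Gamma=\Gamma^0\ltimes W$ with $\Gamma^0$ a finite-index \emph{normal} subgroup of $\Aut(S)^*$ (resp.\ $\Bir(X)^*$), rather than taking $\Aut(S)^*$ itself; this is a technical convenience and does not change the argument.
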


The ideas of the proofs of both statements are essentially the same and are somewhat similar to the abelian case. 

In the following,  the $\Z$-structure of the N\'eron-Severi group $N^1(X)$ of the hyperk\"ahler manifold $X$ is important. We denote 
$$\Bir(X)^* = \im\big(\Bir(X) \to O^+(N^1(X))\big)$$
and 
$$\Aut(X)^* = \im\big(\Aut(X) \to O^+(N^1(X))\big).$$
Here $O^+(N^1(X))$ is the orthogonal group of $N^1(X)$ preserving the positive cone, with respect to the Beauville-Bogomolov-Fujiki form. As $X$ is a hyperk\"ahler manifold, $f^* \in O^+(N^1(X))$ for $f \in \Bir(X)$. In the case of a K3 surface,  the Beauville-Bogomolov-Fujiki form is nothing but the intersection form.

To make the essential part clear, let us explain first how to proceed in the case of projective K3 surfaces, using the following version of the global Torelli theorem.

\begin{thm} \label{thm:TorelliK3} 
Let $S$ be a projective K3 surface. Then there is a finite index normal subgroup $\Gamma^0$ of $\Aut(S)^*$ and a finite index subgroup $\Gamma$ of $O(N^1(S))$ with a semi-direct product decomposition
$$\Gamma = \Gamma^0 \ltimes W(N^1(S)).$$
Here $W(N^1(S))$ is the reflection group generated by the effective $(-2)$-curves on $S$. Moreover, $\Amp^+(S)$ is a fundamental domain of the action of $W(N^1(S))$ on the $\Q$-rational hull $P(S)^+$ of the positive cone $P(S) \subseteq N^1(X)_\R$. 
\end{thm}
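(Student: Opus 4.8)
The plan is to deduce the statement from the global Torelli theorem for K3 surfaces (in the Piatetski-Shapiro--Shafarevich form as refined by Burns--Rapoport and Looijenga--Peters), together with standard lattice theory of the Picard lattice $N^1(S)$. First I would recall that, because $S$ is a projective K3 surface, the Picard lattice $N^1(S)$ is an even lattice of signature $(1,\rho-1)$, so the positive cone $P(S)$ makes sense and has two components; fix the one $P(S)^+$ meeting the ample cone. The key structural fact is that the Weyl group $W(N^1(S))$ generated by reflections $s_\delta$ in classes $\delta$ with $\delta^2=-2$ acts on $P(S)^+$, and its chambers are the ample cones of the various birational models of $S$ that are isomorphic to $S$ (here, since $S$ is a minimal surface, every marked form of $S$ is again $S$ and every nef class is semiample, so these models are all $S$ itself). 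I would verify that a fundamental domain for $W(N^1(S))$ on $P(S)^+$ is precisely the closure of $\Amp(S)$ intersected with $P(S)^+$, i.e. $\Amp^+(S)$; this is the classical picture that the walls of the ample chamber are the hyperplanes $\delta^\perp$ for the finitely many (up to the action) effective $(-2)$-classes, and that $W$ acts simply transitively on the chambers.

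Next I would bring in the automorphisms. By the global Torelli theorem, an isometry $g$ of $N^1(S)$ that extends to a Hodge isometry of $H^2(S,\Z)$ and preserves the ample cone is induced by an automorphism of $S$; conversely $\Aut(S)$ maps to $O(N^1(S))$ with finite kernel, giving $\Aut(S)^*\subseteq O(N^1(S))$. The point is to show that, after passing to a finite-index subgroup, the full orthogonal group decomposes as a semidirect product. Concretely: let $O^+(N^1(S))$ be the index-two subgroup preserving $P(S)^+$. Since $W(N^1(S))$ is normal in $O^+(N^1(S))$ and acts simply transitively on the set of chambers, any $\gamma\in O^+(N^1(S))$ can be written uniquely as $w\cdot\gamma_0$ with $w\in W$ and $\gamma_0$ stabilizing the ample chamber $\Amp^+(S)$; the stabilizer of that chamber consists of isometries preserving the ample cone, hence (Torelli, up to the finite-index issue coming from whether a given isometry of $N^1(S)$ lifts to $H^2(S,\Z)$) is commensurable with $\Aut(S)^*$. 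Taking $\Gamma$ to be a suitable finite-index subgroup of $O(N^1(S))$ on which the lift obstruction vanishes and $\Gamma^0=\Gamma\cap(\text{chamber stabilizer})$ yields $\Gamma=\Gamma^0\ltimes W(N^1(S))$, with $\Gamma^0$ of finite index in $\Aut(S)^*$ and normal in $\Gamma$ because $W$ is.

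The main obstacle is the bookkeeping of finite indices, i.e. matching the abstract lattice-theoretic group $O^+(N^1(S))$ and its chamber stabilizer with the geometrically defined $\Aut(S)^*$. The subtlety is that not every isometry of $N^1(S)$ preserving $\Amp^+(S)$ need extend to a Hodge isometry of $H^2(S,\Z)$ (the gluing with the transcendental lattice $T(S)$ and the behaviour on the discriminant group $A_{N^1(S)}\cong A_{T(S)}$ must be compatible), so one only gets $\Aut(S)^*$ up to finite index; dually, the kernel of $\Aut(S)\to O(N^1(S))$ is finite but need not be trivial. I would handle this by invoking Nikulin's theory of discriminant forms to see that the relevant conditions cut out a finite-index subgroup of $O(N^1(S))$, and choose $\Gamma$ inside it; this is exactly the standard argument (as in \cite{St85}), so I would cite it rather than reproduce it. Once $\Gamma$ is chosen so that its chamber stabilizer sits inside (a finite-index subgroup of) $\Aut(S)^*$, the semidirect product decomposition and the statement about $\Amp^+(S)$ being a fundamental domain for $W(N^1(S))$ on $P(S)^+$ follow formally from the simple transitivity of $W$ on chambers.
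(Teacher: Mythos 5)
The paper does not actually prove this statement: it is recalled as a known ``version of the global Torelli theorem'' and used as a black box in the deduction of Sterk's Theorem~\ref{thm:SterkMarkman}(1), with the proof residing in \cite{St85} and the Piatetski-Shapiro--Shafarevich/Burns--Rapoport Torelli theorem together with Nikulin's lattice theory. Your reconstruction is exactly that standard argument and is essentially correct: the decomposition $O^+(N^1(S))=W(N^1(S))\rtimes\mathrm{Stab}(\Amp^+(S))$ from simple transitivity of $W$ on the chambers, the identification of the chamber stabilizer with $\Aut(S)^*$ up to finite index via Torelli plus the discriminant-form/extension criterion, and the chamber description of $\Amp^+(S)$ as a fundamental domain. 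Two small points to tighten. First, the theorem asks that $\Gamma^0$ be normal in $\Aut(S)^*$, and your justification ``normal in $\Gamma$ because $W$ is'' is not right: a complement to a normal subgroup need not itself be normal, and in any case normality in $\Gamma$ is not what is claimed; the fix is simply to replace $\Gamma^0$ by the intersection of its finitely many $\Aut(S)^*$-conjugates (still of finite index) and adjust $\Gamma$ accordingly. Second, the fundamental-domain claim is on the rational hull $P(S)^+$, not just on the open positive cone, so one needs the standard lemma (in \cite{St85}) that the $W$-translates of $\Amp^+(S)$ cover the rational boundary points as well; you gloss over this, but it is part of the cited package rather than a conceptual gap.
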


Let $O^+(N^1(S))$ be the orthogonal group of $N^1(S)$ preserving the positive cone. As $\Gamma$ is an arithmetic subgroup of $O(N^1(S)_\R)$ by definition, we can find a rational polyhedral fundamental domain $\Delta$ of the action of $\Gamma$ on $P^+(S)$ by Theorem \ref{thm:Ash}. As $W(N^1(S))$ is generated by the reflections with respect to hyperplanes, we can choose $\Delta$ so that $\Delta \subseteq \Amp^+(S)$, using the semi-direct product structure of Theorem \ref{thm:TorelliK3} and the definition of the fundamental domain. Then, again by the semi-direct product structure, $\Gamma$ is nothing but the fundamental domain of the action $\Gamma^0$ on $\Amp^+(S)$. As $\Gamma^0$ is a normal finite index subgroup of $\Aut(S)^*$, Theorem \ref{thm:SterkMarkman}(1) follows. 

\begin{rem} 
If $S$ is a very general K3 surface, then $S$ is non-projective. So the ample cone is empty but the K\"ahler cone is still rich. For this reason, one might expect a version of the Cone Conjecture for the K\"ahler cone in $H^{1,1}(S)$ for a non-projective K3 surface. However, this expectation is not met. In fact, if $S$ is very general, then $\rho(S) = 0$ and $\Aut(S) = \{\id_S\}$. Moreover, $S$ has no smooth rational curves and the K\"ahler cone of $S$ coincides with the positive cone in $H^{1,1}(S)$, which is completely circular. This is also the unique fundamental domain as $\Aut(S) = \{\id_S\}$. So the version of cone conjecture for the K\"ahler cone does not hold for a very general K3 surface. 
\end{rem}

For a projective hyperk\"ahler manifold $X$, we have a similar Torelli type theorem, not for $\Aut(X)$ but for $\Bir(X)$. For the statement, we say that an effective divisor $E$ on $X$ is exceptional if the matrix $\|q(E_i, E_j)\|_{i,j}$ is negative definite, where $\Supp E = \sum_i E_i$ is the decomposition into  irreducible components of $E$ and $q$ is the Beauville-Bogomolov-Fujiki form. Note that an exceptional divisor on a K3 surface (in this sense) is nothing but an effective sum of $(-2)$-curves. 

Recall that the positive cone on $X$ is defined to be the connected component of the cone of classes of divisors $D$  with $q(D,D) > 0$ containing the ample cone. 

The following formulation is due to Markman after an important work of Huybrechts and Verbitsky (see \cite{Mk11} for details):

\begin{thm} \cite{Mk11} \label{thm:TorelliHK} 
There is a finite index normal subgroup $\Gamma^0$ of $\Bir(X)^*$ and a finite index subgroup $\Gamma$ of $O(N^1(X))$ such that 
$$\Gamma = \Gamma^0 \ltimes W(N^1(X)).$$
Here $W(N^1(X))$ is the group generated by the reflections with respect to all the exceptional effective divisors $E$ on $X$. Moreover, $\overline{\Mov}^+(X)$ is a fundamental domain of the action of $W(N^1(X))$ on the $\Q$-rational hull $P(X)^+$ of the positive cone $P(X) \subseteq N^1(X)_\R$.  
\end{thm}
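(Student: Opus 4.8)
The plan is to mimic the K3 argument sketched after Theorem~\ref{thm:TorelliK3}, replacing the global Torelli theorem for K3 surfaces by the global Torelli theorem for hyperk\"ahler manifolds (Huybrechts, Verbitsky) and the reflection group generated by $(-2)$-curves by the reflection group generated by \emph{prime exceptional divisors}. Write $P(X)$ for the positive cone, let $\mathrm{Mon}^2(X)\subseteq O(H^2(X,\Z),q)$ be the monodromy group and $\mathrm{Mon}^2_{\mathrm{Hdg}}(X)$ the subgroup of those monodromy operators that are Hodge isometries; restriction to $N^1(X)$ gives a homomorphism $\mathrm{Mon}^2_{\mathrm{Hdg}}(X)\to O(N^1(X))$ whose image I call $\Gamma$. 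Note that $N^1(X)$ has signature $(1,\rho-1)$ for $q$, so $P(X)^+$ is a cone over a rational hull of hyperbolic space, and $\Gamma$ is a discrete group acting on it.

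First I would extract two consequences of the Torelli machinery. \emph{Arithmeticity:} $\mathrm{Mon}^2(X)$ has finite index in $O(H^2(X,\Z),q)$ (computed explicitly by Markman for the known deformation types), and, via Nikulin's theory of the action on discriminant groups, $\Gamma$ is a finite index -- hence arithmetic -- subgroup of $O(N^1(X))$. \emph{Torelli for $\Bir$:} a birational self-map of $X$ induces an element of $\mathrm{Mon}^2_{\mathrm{Hdg}}(X)$ preserving $\overline{\Mov}^+(X)$, and conversely every element of $\Gamma$ preserving $\overline{\Mov}^+(X)$ is induced by a birational self-map of $X$; thus, up to finite index, the stabilizer of $\overline{\Mov}^+(X)$ in $\Gamma$ is exactly $\Bir(X)^*$.

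Next I would install the wall-and-chamber structure on $P(X)$. For a prime exceptional divisor $E$ (so $q(E,E)<0$) the reflection $R_E(x)=x-\frac{2q(x,E)}{q(E,E)}E$ is, by Markman's divisibility results on prime exceptional classes, an integral isometry lying in $\mathrm{Mon}^2_{\mathrm{Hdg}}(X)$; let $W=W(N^1(X))$ be the group they generate. Using Boucksom's divisorial Zariski decomposition together with the description of the movable cone by prime exceptional divisors, one shows that the hyperplanes $E^\perp$, as $E$ runs over the $W$-orbit of prime exceptional classes, are locally finite in $P(X)$, decompose it into chambers, and that the closure of the chamber containing $\Amp(X)$ is (after passing to $\Q$-rational hulls) exactly $\overline{\Mov}^+(X)$; moreover this hyperplane arrangement is $\Gamma$-invariant, since a Hodge--monodromy isometry carries $\overline{\Mov}^+(X)$ onto the movable cone of a hyperk\"ahler manifold $X'$ birational to $X$, which shares the lattice $N^1(X)$ and its Hodge structure. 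A standard reflection-group argument in the spirit of Vinberg then yields that $W$ acts on $P(X)^+$ with $\overline{\Mov}^+(X)$ as a strict fundamental domain -- this is the ``moreover'' of the statement -- and that $W\trianglelefteq\Gamma$.

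Finally I would assemble the semidirect product. Since $W$ acts simply transitively on the chambers and $\overline{\Mov}^+(X)$ is one of them, each $g\in\Gamma$ factors uniquely as $g=w\cdot g_0$ with $w\in W$ and $g_0\in\mathrm{Stab}_\Gamma(\overline{\Mov}^+(X))$, uniqueness coming from the fact that the identity is the only element of $W$ fixing a chamber; hence $\Gamma=\mathrm{Stab}_\Gamma(\overline{\Mov}^+(X))\ltimes W$. By the Torelli input this stabilizer coincides, up to finite index, with $\Bir(X)^*$; replacing it if necessary by a finite index normal subgroup $\Gamma^0$ of $\Bir(X)^*$ (and shrinking $\Gamma$ to the finite index subgroup $\Gamma^0\ltimes W$, which preserves all of the above) gives the asserted decomposition $\Gamma=\Gamma^0\ltimes W(N^1(X))$. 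I expect the main obstacle to be the chamber step: that the $R_E$ are integral monodromy operators, that the wall arrangement is locally finite inside $P(X)$, and -- above all -- that $\overline{\Mov}^+(X)$ is \emph{precisely} one chamber with no hidden walls. This last point rests on the full strength of Boucksom's divisorial Zariski decomposition and of the birational geometry of hyperk\"ahler manifolds, and is exactly what forces the detour through $\mathrm{Mon}^2(X)$ rather than a direct argument inside $N^1(X)$.
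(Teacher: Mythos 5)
Your outline is correct and follows the same route as the source on which the paper relies for this statement: the paper itself gives no proof of Theorem~\ref{thm:TorelliHK}, quoting it from Markman \cite{Mk11}, and your steps --- arithmeticity of the image of the Hodge monodromy group in $O(N^1(X))$ via Nikulin-type lattice arguments, Markman's theorem that reflections in prime exceptional classes are \emph{integral} monodromy operators (exactly the ``miracle'' the paper highlights, since $q(E,E)\neq -2$ in general), the identification of $\overline{\Mov}^+(X)$ with the closure of the fundamental exceptional chamber, and the Vinberg-type simply-transitive chamber argument yielding $\Gamma=\Gamma^0\ltimes W(N^1(X))$ --- reproduce precisely the architecture of Markman's argument, in parallel with the K3 case sketched after Theorem~\ref{thm:TorelliK3}. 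The remaining caveats are technical points you already flag (local finiteness of the exceptional wall arrangement inside $P(X)$, which rests on boundedness/divisibility constraints for prime exceptional classes, and the finite-index and normality bookkeeping in the last step), all of which are carried out in \cite{Mk11}.
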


The miracle here is the inclusion $W(N^1(X)) \subseteq O(N^1(X))$ even though $q(E,E) \neq -2$ in general. By using Theorem \ref{thm:TorelliHK}, one proves Theorem \ref{thm:SterkMarkman}(2) in the exactly same manner as Theorem \ref{thm:SterkMarkman}(1). 

\begin{rem} \label{rem:TorelliHK} 
To reduce the second assertion in the last theorem for $\overline{\Mov}^e(M)$ from the corresponding assertion for $\overline{\Mov}^+(M)$, we would need a strong abundance type result for any nef rational divisor $D$ with $q(D, D) = 0$.    
\end{rem}

There seems no such group theoretic interpretation of the action of $\Aut(M)$. However, recently Markman-Yoshioka \cite{MY15} and Amerik-Verbitsky \cite{AV15} proved the following very interesting result on the Cone Conjecture for the nef cone as a special case of a more general result which reduces the problem essentially to a degree bound for special ``exceptional'' divisors (see \cite{MY15} for this general statement):

\begin{thm} \cite{MY15,AV15} \label{thm:deformHilb} 
Let $X$ be a projective hyperk\"ahler manifold. If $X$ is deformation equivalent to the Hilbert scheme of points on a K3 surface or a generalised Kummer variety. Then the Cone Conjecture \ref{conj1}(1) for $\Nef^+(X)$ holds. 
\end{thm}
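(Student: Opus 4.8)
The plan is to run the argument behind Theorem \ref{thm:SterkMarkman}(1) one level deeper. There, $\Amp^+(S)$ was cut out of $P(S)^+$ by the reflection group of $(-2)$-curves, and one concluded by applying Theorem \ref{thm:Ash} to a finite-index overgroup; here one must instead use the wall-and-chamber structure that the set of \emph{all} wall divisors imposes on the positive cone, and replace $\Bir(X)$ by $\Aut(X)$. The starting point is that, via the Beauville--Bogomolov--Fujiki form (which has signature $(1,\rho(X)-1)$), the positive cone $P(X)\subseteq N^1(X)_\R$ is precisely a cone of the type $C_n$ of Example \ref{exa:selfdual1}(2): it is self-dual and homogeneous and carries the natural $\Q$-structure coming from $N^1(X)$. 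The monodromy group $\mathrm{Mon}^2_{\mathrm{Hdg}}(X)$ is an arithmetic, finite-index subgroup of $O^+(N^1(X))\subseteq\mathrm{Aut}(P(X))$, so by Theorem \ref{thm:Ash} it admits a rational polyhedral fundamental domain $\Delta_0\subseteq P(X)^+$.

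Next, following Markman and the global Torelli theorem of Huybrechts--Verbitsky, one has a geometric description of the ample cone: there is a set $\mathcal W\subseteq N^1(X)$ of primitive wall classes (the MBM classes), invariant under $\mathrm{Mon}^2_{\mathrm{Hdg}}(X)$, such that $\Amp(X)$ is a connected component of $P(X)\setminus\bigcup_{v\in\mathcal W}v^{\perp}$; moreover $\Aut(X)^*$ is exactly the stabiliser of this chamber inside $\mathrm{Mon}^2_{\mathrm{Hdg}}(X)$, while the kernel of $\Aut(X)\to\Aut(X)^*$ is finite. The actual content of \cite{MY15,AV15} is then a Looijenga-type principle: \emph{if} $\mathcal W$ consists of finitely many $\mathrm{Mon}^2_{\mathrm{Hdg}}(X)$-orbits, then the arrangement $\{v^\perp:v\in\mathcal W\}$ is locally finite on $P(X)$ modulo the group, so $\Delta_0$ meets only finitely many of the hyperplanes $v^\perp$ and is cut by them into finitely many rational polyhedral pieces; for each such piece $\delta$ contained in the closure of a chamber lying in the $\mathrm{Mon}^2_{\mathrm{Hdg}}(X)$-orbit of $\Amp(X)$, choosing $g$ with $g(\delta)\subseteq\overline{\Amp(X)}$ and taking the union of the finitely many $g(\delta)$ produces a rational polyhedral fundamental domain $\Pi$ for $\Aut(X)^*$, hence for $\Aut(X)$, on $\Amp^+(X)=\Nef^+(X)$ (the defining conditions match those of Conjecture \ref{conj1}(1), which are phrased in terms of $g^*$). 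This is the asserted statement.

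Everything is thereby reduced to a single finiteness statement — that $\mathcal W$ has finitely many $\mathrm{Mon}^2_{\mathrm{Hdg}}(X)$-orbits — and this is the crux, and the only place where the hypothesis on the deformation type is used. By Eichler's criterion for the relevant lattices, finiteness of orbits is equivalent to an a priori bound on the Beauville--Bogomolov square $|q(v,v)|$ of a primitive wall class $v$, together with a bound on its divisibility in $N^1(X)$. For $X$ deformation equivalent to $\mathrm{Hilb}^n$ of a K3 surface, Markman's lattice-theoretic classification of monodromy-reflective classes supplies precisely such bounds, expressed in terms of $n$; for $X$ deformation equivalent to a generalised Kummer variety one argues identically using the corresponding Mukai-type lattice. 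No such uniform bound on wall classes is known for an arbitrary projective hyperk\"ahler manifold, and this is exactly why the theorem — in contrast to Theorem \ref{thm:SterkMarkman}(2), where the relevant reflection group lies inside $O(N^1(X))$ automatically (the ``miracle'' noted after Theorem \ref{thm:TorelliHK}) — must be restricted to these two deformation types.
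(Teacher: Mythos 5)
The paper gives no proof of this statement: it is a survey item cited to \cite{MY15,AV15}, accompanied only by the remark that the result follows from a general criterion ``which reduces the problem essentially to a degree bound for special `exceptional' divisors.'' Your outline is exactly that reduction, and it matches the strategy of the cited papers: arithmeticity of the Hodge monodromy group plus Theorem \ref{thm:Ash} on the positive cone, Markman's chamber description of $\Amp(X)$ with $\Aut(X)^*$ as the stabiliser of a chamber, and the identification of the crux as finiteness of monodromy orbits of wall (MBM) classes, which via Eichler's criterion amounts to a bound on $|q(v,v)|$ and the divisibility --- precisely the ``degree bound'' the paper alludes to, and precisely where the restriction to the two deformation types enters. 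The one step you should tighten is the final assembly: the union of the finitely many transported pieces $g(\delta)$ has no reason to be convex, so it is not itself a rational polyhedral cone as required by Conjecture \ref{conj1}(1); the correct way to pass from ``finitely many orbits of faces'' to an honest rational polyhedral fundamental domain is Looijenga's construction (or a Dirichlet-type domain $\{x : q(x,\xi)\le q(x,g\xi)\ \forall g\}$ for a suitable rational $\xi$ in the ample cone), which is the route taken in \cite{MY15}.
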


Very recently Amerik-Verbitsky \cite{AV14} generalised this theorem to any projective hyperk\"ahler manifold $X$ with $b_2(X) \neq 5$. 

\subsection{Calabi-Yau Manifolds} 

In this subsection we discuss the Cone Conjectures \ref{conj1} and \ref{conj2} for Calabi-Yau manifolds. As mentioned before, a \emph{Calabi-Yau manifold} is a simply connected projective manifold $X$ with trivial canonical bundle such that $H^q(X,\OO_X) = 0$ for $1 \leq q \leq \dim X -1$. A \emph{weak Calabi-Yau manifold} is a projective manifold $X$ with trivial canonical bundle such that $H^1(X,\OO_X) = 0$. 

A main source of examples is the following theorem due to Koll\'ar, see \cite{Bor91}.

\begin{thm}  \label{thm:hypersurface} 
Let $Z$ be a Fano manifold of dimension at least $4$, and let $X \subseteq |{-}K_Z |$ be a smooth divisor. Then the inclusion $j\colon X \to Z $ induces the bijection
$$ j_*\colon \NEb(X) \to \NEb(Z). $$
In particular, $\rho(X) = \rho(Z)$. 
\end{thm}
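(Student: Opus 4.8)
The plan is to deduce everything from the Lefschetz hyperplane theorem together with standard facts about cones of curves on Fano manifolds. First I would recall that, since $Z$ is a Fano manifold of dimension $\dim Z = n \geq 5$ and $X \in |{-}K_Z|$ is a smooth anticanonical divisor, adjunction gives $K_X = (K_Z + X)|_X = \OO_X$, so $X$ is a (weak) Calabi--Yau manifold; by the Lefschetz hyperplane theorem $X$ is moreover simply connected and $H^q(X,\OO_X)=0$ for $1\leq q\leq n-2 = \dim X -1$, so $X$ is genuinely a Calabi--Yau manifold in the sense of the excerpt. The key input is that the Lefschetz hyperplane theorem applies to $X \subseteq Z$ because $X$ is ample: it yields that $H_k(X,\Z) \to H_k(Z,\Z)$ is an isomorphism for $k < \dim X = n-1$ and surjective for $k = n-1$. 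In particular, since $\dim X = n-1 \geq 4 > 2$, the map on second homology $H_2(X,\Z) \to H_2(Z,\Z)$ is an isomorphism, and dually $H^2(X,\Z) \to H^2(Z,\Z)$ (hence on Picard groups, $\Pic(Z)\to\Pic(X)$) is an isomorphism. This already gives $\rho(X)=\rho(Z)$ and identifies $N^1(X)_\R \cong N^1(Z)_\R$, $N_1(X)_\R \cong N_1(Z)_\R$ compatibly with the intersection pairing via $j$.

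Next I would show that under this identification the cones of curves match, i.e. $j_*\colon \NEb(X)\to\NEb(Z)$ is a bijection. The inclusion $j_*(\NEb(X))\subseteq\NEb(Z)$ is automatic, since the image of an effective curve class on $X$ is an effective curve class on $Z$. For the reverse inclusion one uses the Cone Theorem on the Fano manifold $Z$: because $-K_Z$ is ample, $\NEb(Z)$ is a \emph{rational polyhedral} cone spanned by finitely many $K_Z$-negative extremal rays, each generated by the class of a rational curve. It therefore suffices to check that each such generating class lies in $j_*(\NEb(X))$. Here I would invoke a deformation/bend-and-break argument: given a rational curve $C\subseteq Z$ whose class spans an extremal ray, one wants to move $C$ (or a rational curve of the same numerical type) so that it lies inside the anticanonical divisor $X$; since $X$ is a general member of a base-point-free linear system this is plausible by standard arguments bounding the dimension of the family of deformations of $C$ against the codimension-one condition of lying in $X$, using that $-K_Z\cdot C \geq 1$ supplies enough deformations. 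Once every extremal generator of $\NEb(Z)$ is represented by a curve contained in $X$, convexity gives $\NEb(Z)\subseteq j_*(\NEb(X))$, and equality follows.

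The main obstacle is precisely this last step: showing that the extremal rational curves of $Z$ can be represented by curves lying on the anticanonical member $X$. One has to be careful about which rational curves on $Z$ actually deform into $X$ — a priori the family of deformations of $C$ might not sweep out $Z$, or the general deformation might not be containable in a general anticanonical divisor. The clean way around this is to appeal to the results on families of rational curves on Fano manifolds (Mori's bend-and-break plus the fact that on a Fano manifold the minimal rational curves through a general point form a family of the expected dimension $-K_Z\cdot C + \dim Z - 3 \geq \dim Z - 2$), which forces such a family, cut by the ample divisor $X$, to contain curves lying in $X$ with the same numerical class. An alternative, more robust route — and the one Kollár's original argument takes — is to avoid curves altogether on the $X$-side of the reverse inclusion and instead argue with nef divisors: show that a divisor class on $X$ is nef if and only if the corresponding class on $Z$ is nef (one direction is clear; the other again uses that effective curves on $Z$ deform into $X$), so that $\Nef(X)=\Nef(Z)$ under the Lefschetz identification, and then dualize to get $\NEb(X)=\NEb(Z)$. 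Either way, the heart of the matter is the interplay between the Lefschetz isomorphism on homology and the geometry of deformations of extremal rational curves, and that is where the real work lies; the rest is formal.
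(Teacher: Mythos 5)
The paper itself contains no proof of this statement; it is quoted from Koll\'ar's appendix to \cite{Bor91}, so your argument has to be judged on its own terms. Your first step is essentially fine: the Lefschetz hyperplane theorem applied to the ample divisor $X\subseteq Z$ gives the identification of $N_1(X)$ with $N_1(Z)$ and hence $\rho(X)=\rho(Z)$, and the inclusion $j_*(\NEb(X))\subseteq\NEb(Z)$ is indeed automatic. One small correction: the hypothesis is $\dim Z\geq 4$, not $\geq 5$; what you need is $2<\dim X=\dim Z-1$, and this is exactly where the dimension assumption enters (for $\dim Z=3$ the map $H_2(X,\Z)\to H_2(Z,\Z)$ is only surjective, and $\rho$ really can jump, e.g.\ for quartic surfaces in $\PS^3$). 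The reduction of the reverse inclusion to finitely many extremal rays via the Cone Theorem on the Fano $Z$ is also the right move.

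The gap is in the step you yourself identify as the heart of the matter, and the mechanism you propose for it does not work. First, you quietly assume that $X$ is a \emph{general} member of a base-point-free system; the theorem is for an arbitrary smooth member of $|{-}K_Z|$. Second, and more seriously, the rational curves generating an extremal ray $R$ need not form a dominating family: if the contraction of $R$ is birational (in particular small), these curves sweep out only the exceptional locus, so there is no family of minimal curves ``through a general point'' of $Z$ to cut against $X$, and the dimension estimate $-K_Z\cdot C+\dim Z-3$ for such families is simply unavailable. Third, even for a dominating family, an expected-dimension count (the condition $C_t\subseteq X$ is $X\cdot C_t+1$ equations on the parameter space) only bounds the dimension of the incidence locus from below \emph{if it is non-empty}; it does not produce a curve inside $X$. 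Your ``alternative route'' via nef divisors is, as you concede, the same statement dualized and inherits the same gap. The argument that actually closes this step runs through the contraction $f_R\colon Z\to Z_R$ of each extremal ray and shows that $f_R|_X$ cannot be finite: any fiber of $f_R$ of dimension at least $2$ meets the ample divisor $X$ in a curve, which is automatically contracted and hence has class in $R$ --- note that this handles precisely the small and divisorial contractions that your deformation argument misses --- while the remaining case, in which every nontrivial fiber is one-dimensional, is excluded using the length and fiber-dimension estimates for extremal contractions together with the structure of $X$ (trivial canonical bundle, simple connectedness). That case analysis is the real content of Koll\'ar's proof and is absent from your proposal.
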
 

When $\Aut (X)$ is finite, Conjecture \ref{conj1}(1) says that $\Nef (X) \cap \Eff (X)$ is a rational polyhedral cone. In particular,  $\Nef (X) \cap \Eff (X)$ is closed and therefore  $\Nef (X) \cap \Eff (X) = \Nef(X).$ Thus, if $L$ is a nef divisor, then $L$ is effective (and potentially $L$ is semiample, see Section \ref{sec:abundance}). 

If $X$ is a smooth anticanonical section of dimension at least three in a Fano manifold $Z$, then by Theorem \ref{thm:hypersurface} the cone  $\NEb (X) $ is indeed a rational  polyhedral cone, since $\NEb(Z) $ is rational polyhedral.

Related to this, the following theorem of Kawamata \cite{Kaw97,KKL12} is highly relevant: 

\begin{thm} 
Let $X$ be a normal projective $\Q$-factorial variety with terminal singularities and with trivial canonical class. Then the cones $\Nef (X)$ and $\overline{\Mov} (X)$ are locally rationally polyhedral in the big cone $\B (X)$.
\end{thm}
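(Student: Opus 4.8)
The plan is to reduce the statement to Kawamata's theorem on the rational polyhedrality of the nef and movable cones near the big cone via the machinery of finitely generated adjoint rings, following \cite{Kaw97} as reformulated in \cite{KKL12}. The key observation is that since $K_X \equiv 0$, for any $\Q$-divisor $D$ we have $K_X + D \equiv D$, so questions about the nef and movable cones of $X$ are exactly questions about adjoint cones; thus the finite generation results for adjoint rings can be brought to bear directly. First I would fix a rational polyhedral subcone $\mathcal{C} \subseteq \B(X)$ (spanned by finitely many big $\Q$-divisors) and consider the $\Q$-divisors in $\mathcal{C}$. Because every class in $\B(X)$ is big, after scaling each such $D$ one can, using $K_X \equiv 0$, invoke the relevant finite generation statement (a consequence of the existence of minimal models for klt pairs of the form $(X, \Delta)$ with $\Delta$ big, i.e. \cite{BCHM}) to conclude that the relevant section ring, or the associated sheaf of divisorial rings over $\mathcal{C}$, is finitely generated.

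The main steps, in order, are as follows. \textbf{Step 1:} Reduce to a neighbourhood of a fixed big class: it suffices to show that each point of $\Nef(X) \cap \B(X)$ (resp. $\overline{\Mov}(X) \cap \B(X)$) has a rational polyhedral neighbourhood inside the respective cone, and by an elementary cone argument this can be checked on a finitely generated rational polyhedral subcone $\mathcal{C}$ of $\B(X)$. \textbf{Step 2:} Apply finite generation: the divisorial ring $R(X;\mathcal{C})$ associated to $\mathcal{C}$ is finitely generated, since each generator is (after scaling) of the form $K_X + \Delta$ with $\Delta$ big and klt. \textbf{Step 3:} Invoke the structure theory of finitely generated divisorial rings (as in \cite{KKL12}): finite generation of $R(X;\mathcal{C})$ yields a finite rational polyhedral chamber decomposition of $\mathcal{C}$ into cones on which the Iitaka/Zariski-decomposition data is linear, and on each chamber the "movable part'' and "nef part'' maps are piecewise linear with rational coefficients. \textbf{Step 4:} Identify $\Nef(X) \cap \mathcal{C}$ and $\overline{\Mov}(X) \cap \mathcal{C}$ as the union of those closed chambers on which the relevant birational contraction is an isomorphism in codimension one (for $\overline{\Mov}$) resp. a morphism contracting nothing (for $\Nef$); being a finite union of rational polyhedral cones, each is rational polyhedral, which gives the local rational polyhedrality in $\B(X)$.

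The hard part is \textbf{Step 2 together with Step 3}: one must know that adjoint rings with big boundary are finitely generated and that this finite generation propagates to the whole polyhedral subcone $\mathcal{C}$ uniformly, not just along a single ray. This is precisely the content of the finite generation theorem underlying \cite{Kaw97} and made explicit in \cite{KKL12}, which itself rests on \cite{BCHM}; I would cite these rather than reprove them. A secondary subtlety is matching conventions: one needs that a class is in $\Nef(X) \cap \B(X)$ iff it is semiample "up to the chamber structure'' — more precisely, that on the nef chamber the birational map is the identity and the divisor is genuinely nef, which is where one uses that big nef $\Q$-divisors on a variety with $K_X \equiv 0$ are semiample (base-point-free theorem applied to $K_X + D \equiv D$). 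Everything else — the cone-theoretic reductions in Steps 1 and 4 — is formal once the chamber decomposition is in hand.
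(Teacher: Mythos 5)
The paper states this theorem without proof, simply attributing it to Kawamata \cite{Kaw97} and to \cite{KKL12}, and your sketch is essentially the argument contained in those references: reduce to a rational polyhedral subcone of the big cone, use $K_X\sim_{\Q}0$ to view its generators (after scaling, so that the boundaries become klt) as adjoint divisors, invoke finite generation from \cite{BCHM}, and read off $\Nef(X)$ and $\overline{\Mov}(X)$ from the resulting finite Mori chamber decomposition. So the proposal is correct and takes essentially the same route as the paper's cited sources.
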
 

In view of the Cone conjecture, it is important to study $\Aut (X)$ and $\Bir (X)$ for weak Calabi-Yau manifolds. Clearly, both of these groups are discrete. The following fact is very useful:

\begin{pro} \label {pro:basic} 
Let $X$ be a Calabi-Yau manifold and let $G$ be a subgroup of $\Bir (X)$. Assume that there is an ample line bundle $\mathcal L$ on $X$ such that $f^*\mathcal L \simeq \mathcal L$ for all $f \in G$. Then $G$ is finite and it is a subgroup of $\Aut (X)$. 
\end{pro}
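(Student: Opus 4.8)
The plan is to exploit the rigidity of birational maps between Calabi--Yau manifolds together with a standard finiteness argument for automorphisms fixing a polarization. First I would reduce the problem of birational maps to honest automorphisms. Since $K_X$ is trivial, any $f \in \Bir(X)$ is an isomorphism in codimension one; more precisely, for a Calabi--Yau manifold one knows that birational maps are isomorphisms outside subsets of codimension at least two on both sides, so there is a canonical identification $f^* \colon N^1(X) \to N^1(X)$, and the hypothesis $f^*\mathcal L \simeq \mathcal L$ makes sense. The key point is that if $f^*\mathcal L \simeq \mathcal L$ with $\mathcal L$ ample, then $f$ cannot contract any divisor, nor can $f^{-1}$, because $\mathcal L$ is ample and $f$ pulls back ample to ample; a map which is an isomorphism in codimension one and contracts no divisor in either direction is an isomorphism. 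Hence $G \subseteq \Aut(X)$.

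Next I would prove finiteness of $G$ as a subgroup of $\Aut(X)$. Fix the very ample line bundle $\mathcal L^{\otimes m}$ for suitable $m$, giving a closed embedding $\iota\colon X \hookrightarrow \PS^N$. Every $g \in G$ satisfies $g^*\mathcal L^{\otimes m}\simeq\mathcal L^{\otimes m}$, so $g$ acts on $H^0(X,\mathcal L^{\otimes m})$ and is therefore induced by an element of $\PGL(N+1,\C)$ preserving $\iota(X)$; this exhibits $G$ as a subgroup of the linear algebraic group $\Aut(X,\mathcal L^{\otimes m})\subseteq\PGL_{N+1}$, which is of finite type. It remains to see this group is in fact finite, i.e.\ has trivial identity component. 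Here I would invoke the absence of nonzero holomorphic vector fields on $X$: since $X$ is Calabi--Yau we have $H^0(X,T_X) \cong H^0(X,\Omega_X^{n-1}\otimes\omega_X^{\vee})\cong H^0(X,\Omega_X^{n-1})$, and by Hodge symmetry and the Calabi--Yau condition $H^{n-1,0}(X)=H^0(X,\Omega_X^{n-1})=0$ (the only nonzero $H^{p,0}$ are $p=0,n$). Thus the Lie algebra of $\Aut(X)$ vanishes, so $\Aut(X)$ is discrete, hence $\Aut(X,\mathcal L^{\otimes m})$ is a discrete subgroup of an algebraic group and therefore finite; in particular $G$ is finite.

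I would organize the write-up in two short steps: (i) $f^*\mathcal L\simeq\mathcal L$ forces $f$ to be an automorphism, using that a birational self-map of a Calabi--Yau manifold is an isomorphism in codimension one and that pulling back an ample class along such a map and getting back an ample class rules out divisorial contractions; (ii) the subgroup of $\Aut(X)$ fixing an ample class is finite, via the embedding into $\PGL$ and the vanishing $H^0(X,T_X)=0$. The main obstacle is step (i): one has to argue carefully that no divisor is contracted. The cleanest route is: if $E$ is a prime divisor contracted by $f$, then for a general curve $C$ in $f(E)$ pulled back and moved, one compares $\mathcal L\cdot(\text{strict transform})$ against $f^*\mathcal L\cdot(\cdots)$ and derives a contradiction with $f^*\mathcal L\equiv\mathcal L$ and ampleness; equivalently, one notes $f_*\mathcal L$ is a movable class whose positivity is incompatible with an exceptional divisor in the way $\mathcal L$ is already nef and big with $\nu(\mathcal L)=\dim X$. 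Once step (i) is in place, step (ii) is the standard Matsusaka--Mumford type finiteness argument and is essentially formal.
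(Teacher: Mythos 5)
The paper states this proposition without proof, so your argument has to stand on its own. Your step (ii) is correct and standard: once $G\subseteq\Aut(X)$ and $G$ preserves the polarization, the embedding by $|\mathcal L^{\otimes m}|$ realises $G$ inside the algebraic group $\Aut(X,\mathcal L^{\otimes m})\subseteq\mathrm{PGL}_{N+1}(\C)$, whose Lie algebra is $H^0(X,T_X)\cong H^0(X,\Omega_X^{n-1})=H^{n-1,0}(X)=0$ for a Calabi--Yau manifold, so this group is finite and a fortiori so is $G$.

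The gap is in step (i). Your key assertion --- that a birational self-map which is an isomorphism in codimension one and contracts no divisor in either direction must be an isomorphism --- is false: any flop of a Calabi--Yau threefold is an isomorphism in codimension one and contracts no divisor, yet is not biregular. Moreover, for a birational self-map of a Calabi--Yau manifold the absence of contracted divisors is automatic (source and target are terminal minimal models), so the hypothesis $f^*\mathcal L\simeq\mathcal L$ does no work in the argument as you have set it up; your ``cleanest route'' at the end again only excludes a contracted prime divisor $E$, which cannot occur in any case. What actually has to be ruled out is a nontrivial \emph{small} modification, and this is precisely where ampleness enters. The standard repair: since $f$ is an isomorphism in codimension one it induces an isomorphism of graded rings $\bigoplus_m H^0(X,\mathcal L^{\otimes m})\to\bigoplus_m H^0(X,(f^*\mathcal L)^{\otimes m})=\bigoplus_m H^0(X,\mathcal L^{\otimes m})$, and since $\mathcal L$ is ample, $X\simeq\Proj\bigoplus_m H^0(X,\mathcal L^{\otimes m})$, so $f$ is induced by an automorphism of this ring and is therefore biregular. (Equivalently, on a normalised graph $p,q\colon\Gamma\to X$ the negativity lemma gives $p^*\mathcal L=q^*\mathcal L$, and then a curve $C$ contracted by $p$ but not by $q$ would yield $0=p^*\mathcal L\cdot C=q^*\mathcal L\cdot C=\mathcal L\cdot q_*C>0$; hence $p$ and $q$ contract the same curves, $f$ and $f^{-1}$ are morphisms, and $f\in\Aut(X)$.) With this substitution for step (i), your proof is complete.
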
 

If $\rho(X) = 1$ (this case is of course not interesting from the point of view of the Cone conjecture), then it is classically known that $\Aut (X) = \Bir(X)$, $\Aut (X)$ is finite by Proposition \ref{pro:basic}, and non-trivial finite groups really occur, see \cite{Og14}. It is of its own interest to study what kind of finite groups appear as the automorphism groups of Calabi-Yau manifolds of $\rho(X) = 1$. In this direction, the complete classification of the automorphism groups of smooth quintic Calabi-Yau threefolds -- the most basic Calabi-Yau manifolds of $\rho(X) = 1$ -- was carried out by \cite{OY15}. However, from a cone-theoretic point of view, the first interesting case is when $\rho(X) = 2$, which was analysed in \cite{Og14}:

\begin{thm} \label{thm:Og-aut}
Let $X$ be a weak Calabi-Yau manifold of dimension $n$ with $\rho(X) = 2$. Then $\Aut (X)$ is finite provided either $n$ is odd, or $n$ is even and there is no real number $c\in\R$ and no real quadratic form $q_X$ on $N^1(X)_\R$ such that $x^n = c q_X(x)^{n/2}$ for all $x \in N^1(X)_\R$. 
\end{thm}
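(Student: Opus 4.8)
The plan is to study the action of $\Aut(X)$ on $N^1(X)_\R \cong \R^2$ and to show that, under the stated hypotheses, this action factors through a finite group. First I would reduce to studying $\Aut(X)^* := \im(\Aut(X) \to \GL(N^1(X)_\R))$ together with its kernel. The kernel consists of automorphisms acting trivially on $N^1(X)$, hence fixing the class of an ample line bundle $\mathcal L$; by Proposition \ref{pro:basic} this kernel is finite. So it suffices to prove that $\Aut(X)^*$ is finite.

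Next I would analyze $\Aut(X)^*$ as a subgroup of $\GL(N^1(X)_\R) = \GL(2,\R)$. Two structures on $N^1(X)_\R$ are preserved by $\Aut(X)^*$: the $\Z$-lattice $N^1(X)$, so $\Aut(X)^* \subseteq \GL(2,\Z)$ after a choice of basis, and the nef cone $\Nef(X)$, which is a strictly convex two-dimensional cone (strict convexity because $X$ is projective). An infinite subgroup of $\GL(2,\Z)$ preserving a strictly convex cone $\Nef(X)$ must contain an element of infinite order whose eigenvalues are real, distinct, and irrational (a hyperbolic element); the two eigenrays are then the two boundary rays of $\Nef(X)$, and the eigenvalues $\lambda, \lambda^{-1}$ are conjugate quadratic irrationalities. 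In particular the two boundary rays of $\Nef(X)$ are irrational. The key point is that this forces a hidden algebraic structure: the degree form $x \mapsto x^n$ on $N^1(X)_\R$, which is an $\Aut(X)$-invariant degree-$n$ polynomial (since automorphisms preserve intersection numbers), must vanish on both boundary rays and be invariant under the infinite-order hyperbolic element $g$. I would then argue that an $\R$-polynomial of degree $n$ in two variables, invariant under a hyperbolic $g \in \GL(2,\R)$ with irrational eigenvalue $\lambda$, must (after diagonalizing $g$ in suitable real coordinates $u,v$ so that $g\cdot(u,v) = (\lambda u, \lambda^{-1} v)$) be a scalar multiple of $(uv)^{n/2}$ — comparing monomials $u^a v^{n-a}$, invariance gives $\lambda^{a-(n-a)} = 1$, which for $\lambda$ not a root of unity forces $a = n/2$; this already requires $n$ even, settling the case "$n$ odd" immediately. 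When $n$ is even, this shows $x^n = c\, q_X(x)^{n/2}$ for the quadratic form $q_X = uv$ and some constant $c$, contradicting the hypothesis. Hence under either hypothesis $\Aut(X)^*$ cannot be infinite.

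The last step is to upgrade "not infinite via a hyperbolic element" to genuine finiteness. Here I would use that a subgroup of $\GL(2,\Z)$ preserving a strictly convex cone and containing no hyperbolic element of infinite order is finite: the only infinite-order elements of $\GL(2,\Z)$ are either hyperbolic (excluded) or have a unique eigenray (parabolic/unipotent up to sign and finite index), but a group containing a parabolic element and preserving a strictly convex cone would have to fix one boundary ray and move the other ray's orbit infinitely, which is incompatible with preserving a closed strictly convex cone with only two boundary rays — so no infinite-order element survives, and a torsion subgroup of $\GL(2,\Z)$ is finite (indeed uniformly bounded). Assembling: $\Aut(X)$ is an extension of the finite group $\Aut(X)^*$ by a finite kernel, hence finite.

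The main obstacle I anticipate is the case analysis for infinite-order elements of $\GL(2,\Z)$ acting on the cone: one must rule out the parabolic/unipotent possibility cleanly and handle the sign subtleties (elements may reverse orientation, and an element may square to a hyperbolic one). The polynomial-invariance argument that produces the quadratic form $q_X$ — the heart of why the dichotomy in the statement appears — is itself routine once $g$ is diagonalized, but making the diagonalization rigorous over $\R$ (as opposed to over $\bar\Q$) and tracking that $q_X$ is indeed defined over $\R$ on $N^1(X)_\R$, not merely on a field extension, needs a little care.
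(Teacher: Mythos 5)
Your proposal is correct and follows essentially the same route as the paper's (sketched) argument: reduce to the image $\mathcal A(X)=\Aut(X)^*$ in $\GL(N^1(X))$ modulo a finite kernel, rule out infinite-order elements by diagonalizing a would-be hyperbolic element and using invariance of the top self-intersection form $x\mapsto x^n$ to force $n$ even and $x^n=c\,q_X(x)^{n/2}$, and conclude by Burnside/Schur that a torsion subgroup of $\GL(2,\Z)$ is finite. The paper phrases the key step as ``every element of $\mathcal A(X)$ has order $2$'' (an element either fixes or swaps the two boundary rays of $\Nef(X)$, and your eigenvalue computation shows a ray-fixing element must be the identity), but this is the same analysis in slightly different packaging.
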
 

There are indeed plenty of interesting examples of Calabi-Yau threefolds with $\rho(X) = 2$. One might speculate that such a quadratic form $q_X$ can never exist on a Calabi-Yau manifold: otherwise, $q_X$ would resemble the Beauville-Bogomolov form on a hyperk\"ahler manifold. Another hint  that $\Aut (X) $ must be finite if $\rho(X) = 2$ is provided by the following observation. If $\Aut (X)$ is infinite, then 
$$ c_{i_1}(X)  \cdot \ldots \cdot c_{i_r}(X) = 0$$
for all positive integers $i_j$ such that $i_1 + \dots + i_r = n - 1$. In particular, $c_{n-1}(X) = 0$. One might wonder whether a simply connected projective manifold $X$ of dimension $n$ with $c_{n-1}(X) = 0$ must be symplectic. 

In order to discuss Theorem \ref{thm:Og-aut}, we consider the natural action 
$$ r\colon \Bir (X) \to \GL(N^1(X))$$
on $N^1(X)$. Let $\mathcal B(X)$, respectively $\mathcal A(X)$ be the image of $\Bir (X)$, respectively $\Aut (X)$, via $r$. The main part of the proof of Theorem \ref{thm:Og-aut} is to show that every element in $\mathcal A(X)$ has order $2$. Since $r$ has finite kernel, Theorem \ref{thm:Og-aut} then follows from Burnside's theorem. 

By \cite{LP13}, we have:

\begin{thm}  \label{thm:LP1}
Let $X$ be a Calabi-Yau manifold with $\rho(X) = 2$. Then either $|\mathcal A(X)|\leq2$, or $|\mathcal A(X)|$ is infinite; and either $|\mathcal B(X)|\leq2$, or $|\mathcal B(X)|$ is infinite.
\end{thm}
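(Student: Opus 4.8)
The plan is to exploit that $\rho(X)=2$ forces the cones on which $\mathcal A(X)$ and $\mathcal B(X)$ act to be plane wedges with exactly two extremal rays, and to combine this with the integrality of the action on $N^1(X)$. Concretely, $\mathcal A(X)$ and $\mathcal B(X)$ are subgroups of $\GL(N^1(X))$ with $N^1(X)\cong\Z^2$, so every element $g$ satisfies $\det g=\pm 1$, has two real eigenvalues whose product is $\pm 1$, and has $\mathrm{tr}(g)\in\Z$. Moreover $\mathcal A(X)$ preserves $\Nef(X)$, and $\mathcal B(X)$ preserves $\overline{\Mov}(X)$ --- here one uses that a birational self-map of the minimal model $X$ is an isomorphism in codimension one, hence carries movable divisors to movable divisors. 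Both cones are strongly convex and two-dimensional: $\Nef(X)$ is the dual of the cone of curves $\NEb(X)$, which spans $H_2(X,\R)$; $\overline{\Mov}(X)$ lies inside the pseudoeffective cone, which is strongly convex; and both contain the nonempty ample cone in their interiors. Hence, in suitable coordinates on $N^1(X)_\R\cong\R^2$, each of these cones is a closed wedge bounded by two distinct extremal rays $\ell_1,\ell_2$, and $\ell_1,\ell_2$ form a basis of $N^1(X)_\R$.

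Now let $(G,C)$ be $(\mathcal A(X),\Nef(X))$ or $(\mathcal B(X),\overline{\Mov}(X))$. Since $G$ preserves $C$, it permutes the two extremal rays of $C$, which gives a homomorphism $\sigma\colon G\to\Z/2\Z$. The heart of the argument is to show that $\ker\sigma$ is trivial or infinite. Let $g\in\ker\sigma$, so that $g(\ell_i)=\ell_i$ for $i=1,2$; choosing $0\neq v_i\in\ell_i$ we obtain $g(v_i)=\lambda_i v_i$ with $\lambda_i>0$, the positivity because $g$ preserves the ray $\ell_i$ and not merely the line $\R v_i$. Thus $g$ is diagonalizable over $\R$ with positive eigenvalues, so $\det g=\lambda_1\lambda_2>0$ and hence $\lambda_1\lambda_2=1$, while $\lambda_1+\lambda_2=\mathrm{tr}(g)$ is an integer, necessarily $\geq 2\sqrt{\lambda_1\lambda_2}=2$. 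If $\mathrm{tr}(g)=2$, then $\lambda_1=\lambda_2=1$ and $g=\id$. If $\mathrm{tr}(g)\geq 3$, then the characteristic polynomial $t^2-\mathrm{tr}(g)\,t+1$ has no rational root (a rational root would be $\pm 1$, and neither is a root), so its two roots $\lambda_1,\lambda_2$ are irrational; relabelling if necessary, $\lambda_1>1$, and then $g^k$ has eigenvalue $\lambda_1^k\to\infty$, so the powers $g^k$ are pairwise distinct and $g$ has infinite order. This proves the claim.

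Granting the claim, the theorem follows at once: if $G$ is finite then $\ker\sigma$ is a finite subgroup of $G$, hence $\ker\sigma=\{\id\}$, and since $G/\ker\sigma$ embeds in $\Z/2\Z$ we conclude $|G|\leq 2$; equivalently, $|G|\leq 2$ or $|G|$ is infinite. Applying this in turn to $(\mathcal A(X),\Nef(X))$ and to $(\mathcal B(X),\overline{\Mov}(X))$ yields both assertions of the theorem. I do not anticipate a serious obstacle: the argument is elementary once the geometry is set up, and the only input using the hypotheses beyond $\rho(X)=2$ is that $\mathcal B(X)$ acts on the strongly convex two-dimensional cone $\overline{\Mov}(X)$, which relies on $X$ being a minimal model so that its birational self-maps are isomorphisms in codimension one. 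As a byproduct, the same computation shows that every element of $\mathcal A(X)$ has order $1$, $2$, or $\infty$, which is precisely what is needed to deduce Theorem~\ref{thm:Og-aut} via Burnside's theorem.
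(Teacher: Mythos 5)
Your argument is correct and is essentially the approach the paper indicates (and which is carried out in the cited reference \cite{LP13}): since $\rho(X)=2$, the relevant cone is a salient two-dimensional wedge, the group permutes its two boundary rays, and an element of $\GL_2(\Z)$ fixing both rays with positive eigenvalues is either the identity or has an eigenvalue greater than $1$, hence infinite order. The eigenvalue/trace computation and the index-two reduction via the permutation homomorphism match the paper's sketch, so no further comment is needed.
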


The consequences for the Cone Conjectures can be summarized as follows. 

\begin{thm} \label{thm:LP2}
Let $X$ be a Calabi-Yau manifold with $\rho(X) = 2$. Then
\begin{enumerate}
\item if the group $\Bir(X)$ is finite, then the weak Cone Conjecture holds on $X$;
\item if the group $\Bir(X)$ is infinite, then the Cone Conjecture holds on $X$.
\end{enumerate}
\end{thm}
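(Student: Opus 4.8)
The plan is to deduce Theorem \ref{thm:LP2} from the dichotomy in Theorem \ref{thm:LP1} by treating the two cases separately, using in both cases the fact that $r\colon \Bir(X)\to\GL(N^1(X))$ has finite kernel (Proposition \ref{pro:basic}) so that finiteness of $\Bir(X)$ is equivalent to finiteness of $\mathcal B(X)$, and similarly for $\Aut(X)$ and $\mathcal A(X)$. Since $\rho(X)=2$, the cones $\Nef(X)$, $\overline{\Mov}(X)$ live in the two-dimensional space $N^1(X)_\R$, so a convex cone there is determined by its two boundary rays; this is what makes the polyhedrality statements tractable. Throughout I will freely use that $X$ has trivial canonical bundle and is a minimal model, so that any rational point of $\Nef^e(X)$ is automatically "good" in the sense relevant to the $\Effb$-intersections; the distinction between $\Nef^e$ and $\Nef^+$ in dimension two with $\rho=2$ is only about whether the two extremal rays are rational, hence about which version of the conjecture one gets.

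\emph{Case (1): $\Bir(X)$ finite.} Then $\mathcal B(X)$ and $\mathcal A(X)$ are finite. The group acting is finite, so a (weak) fundamental domain is just a suitable sub-cone $\Pi$ of $\Nef^e(X)$ whose translates under $\Aut(X)^*$ cover $\Nef^e(X)$, and likewise $\Pi'\subseteq\overline{\Mov}^e(X)$ for $\Bir(X)$; the interiors being disjoint and the overlaps lying in rational hyperplanes (i.e.\ rational lines, in the $\rho=2$ setting) is what we must check. Concretely I would argue: in $N^1(X)_\R\cong\R^2$ the cone $\Nef(X)$ is a closed convex cone with two boundary rays $\R_{\ge0}v_1$, $\R_{\ge0}v_2$. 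If both $v_i$ are rational we are in the rational-polyhedral situation and $\Nef^e(X)=\Nef(X)$ is itself rational polyhedral, so $\Pi$ may be taken to be $\Nef(X)$ (the $\Aut$-action being trivial on it up to finite index, or one subdivides by the finitely many elements of $\mathcal A(X)$); this yields Conjecture \ref{conj1}(1). If at least one $v_i$ is irrational, then $\Nef^e(X)$ need not be closed, but one still produces a \emph{not necessarily closed} polyhedral $\Pi$ whose boundary lies on rational lines — this is exactly the content of the weak Cone Conjecture \ref{conj2}(1). The same reasoning applied to $\overline{\Mov}(X)$ (again a two-ray cone) and the finite group $\mathcal B(X)$ gives the weak movable Cone Conjecture \ref{conj2}(2). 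So in the finite case one gets the weak conjecture on the nose.

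\emph{Case (2): $\Bir(X)$ infinite.} By Theorem \ref{thm:LP1} this forces $\mathcal B(X)$ infinite, and (since an infinite subgroup of $\GL_2(\Z)$ sitting inside the image $r(\Bir(X))$, which preserves the pseudoeffective cone and the movable cone) $\mathcal B(X)$ must be an infinite group preserving a two-ray cone; such a group is generated, up to finite index, by an element of infinite order with two distinct real eigen-rays, and its action forces the two boundary rays of $\overline{\Mov}(X)$ (equivalently of $\Eff(X)$) to be irrational — otherwise the group would be finite. The point is that an infinite group action on a strongly convex $2$-cone by rational-polyhedral-preserving automorphisms \emph{cannot} fix the bounding rays, so it moves them densely, and the cone becomes "round" in the relevant sense; the orbit structure of a single hyperbolic element of $\GL_2(\Z)$ then directly produces a genuinely \emph{rational polyhedral} fundamental domain $\Pi'$ inside $\overline{\Mov}^e(X)$ (pick the sub-cone between a suitable rational interior ray and its image under the generator), and likewise $\Pi$ inside $\Nef^e(X)$ once one knows $\Aut(X)$ is infinite or, if it is finite, one subdivides the movable-cone domain by the finitely many chambers — here one invokes Theorem \ref{thm:Og-aut}/Theorem \ref{thm:LP1} again to control $\mathcal A(X)$. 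This gives the full Cone Conjecture \ref{conj1}.

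\emph{Main obstacle.} The delicate point is \emph{not} the group theory — that is handed to us by Theorems \ref{thm:LP1} and \ref{thm:Og-aut} and by Burnside — but rather checking that the fundamental domain one writes down actually sits inside the \emph{effective} nef (resp.\ movable) cone, i.e.\ controlling $\Nef^e$ versus $\Nef$ (and $\overline{\Mov}^e$ versus $\overline{\Mov}$) on the irrational boundary rays. In the infinite case this is where one needs that every interior rational nef class on a Calabi-Yau with $\rho=2$ is effective (a consequence of base-point-free/abundance-type input in this very special situation, or of the structure of $\NEb(X)$ via Theorem \ref{thm:hypersurface}-type reasoning), so that the rational interior ray one uses to cut out $\Pi'$ genuinely lies in $\overline{\Mov}^e(X)$. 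In the finite case the obstruction is the opposite: the irrational boundary ray may fail to be effective, which is precisely why one only obtains the \emph{weak} conjecture there, and one must verify that the overlaps $\Pi\cap g^*\Pi$ are cut out by rational lines — automatic in dimension two, since any line through two rational directions is rational, but worth stating. I expect the write-up to spend most of its effort on this effectivity bookkeeping and on the explicit description of the $\GL_2(\Z)$-orbit of a hyperbolic generator.
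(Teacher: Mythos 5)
Your overall strategy -- splitting along the dichotomy of Theorem \ref{thm:LP1}, and in the infinite case using a hyperbolic element of $\GL_2(\Z)$ whose irrational eigen-rays bound the cone to cut out a rational polyhedral fundamental domain between a rational interior ray and its image under the generator -- is exactly what the paper sketches (and what \cite{LP13} carries out), and you correctly locate the real difficulty in the effectivity bookkeeping. There is, however, one genuine gap: in case (2) the sub-case where $\Aut(X)$ is finite but $\Bir(X)$ is infinite. Your suggestion to ``subdivide the movable-cone domain by the finitely many chambers'' does not prove Conjecture \ref{conj1}(1): a finite group admits a rational polyhedral fundamental domain on $\Nef^e(X)$ only if $\Nef^e(X)$ is itself rational polyhedral, so something must be proved about $\Nef(X)$ directly. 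The argument is: $\Nef(X)$ cannot share a boundary ray with $\overline{\Mov}(X)=\Effb(X)$, for otherwise a suitable power $g$ of the hyperbolic element fixes that ray while $g^*\Nef(X)\neq\Nef(X)$ (as $g\notin\Aut(X)$), producing two distinct chambers with disjoint interiors, a common bounding ray, and both opening into the same half-plane -- impossible in dimension two. Hence both boundary rays of $\Nef(X)$ lie in the big cone $\B(X)$, are rational by Kawamata's local rational polyhedrality theorem quoted in the paper, and their rational points are big, hence effective; so $\Nef^e(X)=\Nef(X)$ is rational polyhedral and one takes $\Pi=\Nef(X)$ (or half of it if an involution swaps the rays).

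On the effectivity points you defer to ``abundance-type input'': the statement you actually need -- that an irrational extremal ray of $\Effb(X)$ carries no effective class -- has an elementary proof, since an effective $\R$-divisor on an extremal ray is a positive combination of prime divisors whose integral classes must then all lie on that ray, forcing the ray to be rational; no Zariski decomposition or nonvanishing is needed. Conversely, in case (1) you over-claim: rationality of both boundary rays of $\Nef(X)$ does \emph{not} give $\Nef^e(X)=\Nef(X)$, because effectivity of a rational nef class on a Calabi-Yau manifold is precisely the open nonvanishing problem -- this is why only the weak conjecture is asserted when $\Bir(X)$ is finite, so your conclusion is right but your reason for the ``full conjecture in the doubly-rational sub-case'' is not. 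Finally, the rationality of the wall $\Pi\cap g^*\Pi$ in the finite case comes from the fact that the $+1$-eigenspace of a nontrivial integral involution of the rank-two lattice $N^1(X)$ is a rational line, not from ``any line through two rational directions is rational''.
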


The assumption that $\rho(X) = 2$ is heavily used: the cone $\NEb(X)$ has exactly two boundary rays, say $\ell_1$ and $\ell_2$. So if $f \in \Aut (X)$, then either the $\ell_j$ are invariant under $f^*$, or $f^*$ maps $\ell_1$ to $\ell_2$ and vice versa. When $f$ is a non-regular birational automorphism, it is necessary to consider the movable cone. 

\begin{exa}  
We describe an example from \cite[Proposition 1.4]{Og14}, exhibiting a Calabi-Yau threefold $X$ with $\rho(X) = 2$ and $\Bir (X)$ infinite. The manifold $X$ is a complete intersection of three general hypersurfaces in $\PS^3 \times \PS^3$ of types $(1,1), (1,1)$ and $(2,2)$. The two projections of $\PS^3 \times \PS^3$ yield morphisms
$$ p_i\colon X \to \PS^3, \quad i=1,2.$$
These morphisms have degree $2$, and explicit calculations show that their Stein factorisations are small contractions, i.e.\ they contract finitely many (smooth rational) curves. Then the line bundles 
$$L_i = p_i^*\OO_{\PS^3}(1) $$ 
are big and nef, but not ample, hence they define boundary rays of $\NEb(X)$. In order to describe $\Bir (X)$, let 
$$\tau_i\colon X \dasharrow  X$$ 
be the covering involutions induced by the maps $p_i$. Then $\Bir (X)$ is generated by $\Aut (X)$ and by the birational automorphisms $\tau_1, \tau_2$. Furthermore, $\tau_1\tau_2$ is of infinite order, hence $\Bir (X)$ is infinite. It can also be shown that both boundary rays of the movable cone $\overline{\Mov} (X)$ are irrational.
\end{exa} 

If $\rho(X) = 3$, the only known result, proved in \cite{LOP13}, concerns dimension three: 

\begin{thm} \label{thm:LOP}
Let $X$ be a weak Calabi-Yau threefold with $\rho(X) = 3$. Then either $\Aut (X)$ is finite, or $\Aut (X)$ is almost abelian of rank $1$, i.e.\ $\Aut (X) \simeq \Z$ up to finite kernel and cokernel. 
\end{thm}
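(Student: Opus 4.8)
The plan is to analyze the natural action $r\colon \Aut(X)\to \GL(N^1(X))$ on the three-dimensional space $N^1(X)_\R$, and to study the image $\mathcal{A}(X)$ together with the invariant structures it must preserve. First I would recall that, since $X$ is a weak Calabi-Yau threefold, there is a canonical cubic form on $N^1(X)_\R$, namely $x\mapsto x^3$ (the triple self-intersection), and any $f\in\Aut(X)$ acts on $N^1(X)$ preserving this cubic form as well as the integral lattice structure and the nef cone $\Nef(X)$. So $\mathcal{A}(X)$ is a subgroup of the group $\Gamma$ of lattice automorphisms preserving a fixed integral cubic form, and the key question becomes: how large can such a group be, and what is the structure of its infinite part?

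Next I would set up the rank-$3$ geometry: the effective/nef cone $\NEb(X)$ (or dually $\Nef(X)$) is a three-dimensional cone, and its boundary is a curve in $\PS(N^1(X)_\R)\cong\PS^2$, which (away from finitely many extremal rational contractions) is cut out by the vanishing of sections of the cubic. The Hessian, null-locus, and the intersection of the cubic surface $\{x^3=0\}$ with the boundary of the positive/nef cone provide a canonical conic (or low-degree) structure that $\mathcal{A}(X)$ must preserve. The crucial dichotomy is whether the cubic form is "degenerate" in a suitable sense — e.g.\ whether it factors, or whether its associated Hessian conic is nondegenerate. In the nondegenerate case, $\mathcal{A}(X)$ preserves a nondegenerate conic in $\PS^2$ and hence, up to finite index, lies in (a subgroup of) $\mathrm{PGL}_2(\R)$ acting on that conic; the integrality forces $\mathcal{A}(X)$ to be a discrete subgroup of $\mathrm{PSL}_2(\R)$ that also preserves the arc of the conic bounding $\Nef(X)$, so it cannot contain a free group or a non-elementary Fuchsian piece — it must be elementary, i.e.\ finite or virtually cyclic (virtually $\Z$), and in the latter case the generator is hyperbolic with two irrational fixed boundary rays. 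The degenerate/reducible cases I would treat separately, showing that the cubic then forces the action to be reducible on $N^1(X)_\R$, which after passing to finite index reduces to a rank $\le 2$ situation where Theorem \ref{thm:Og-aut} (or Theorem \ref{thm:LP1}) applies and again yields finiteness.

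Concretely the steps are: (1) show $r$ has finite kernel, so it suffices to control $\mathcal{A}(X)$; (2) classify the possible cubic forms on a $3$-dimensional space up to the relevant equivalence, separating "general position" from reducible/degenerate cases; (3) in general position, extract the canonical conic from the Hessian and embed (a finite-index subgroup of) $\mathcal{A}(X)$ into $\mathrm{Aut}$ of that conic, then use discreteness plus invariance of the nef cone to conclude $\mathcal{A}(X)$ is virtually trivial or virtually $\Z$; (4) in the remaining cases, reduce the rank and quote earlier results; (5) conclude that $\Aut(X)$ is finite, or almost abelian of rank $1$, i.e.\ $\Aut(X)\simeq\Z$ up to finite kernel and cokernel — and note that when it is infinite cyclic the generator's eigenvalues on $N^1(X)$ are a Salem or quadratic unit, forcing the two boundary rays of $\Nef(X)$ to be irrational.

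The main obstacle I expect is step (2)–(3): controlling a subgroup of $\GL_3(\Z)$ that preserves a cubic form \emph{and} a convex cone, without having a positive-definite or Lorentzian quadratic invariant handed to us as in the abelian or hyperk\"ahler settings. The fact that $\Nef(X)$ is a sharp convex cone is what rules out unipotent or "parabolic-only" infinite groups and forces the infinite part to be hyperbolic and hence (by discreteness and the rank-$3$ constraint) cyclic up to finite index; making this precise — essentially a Tits-alternative-style argument adapted to automorphisms of a rank-$3$ convex cone with no rational boundary structure — is the technical heart, and is presumably where the hypotheses $\dim X = 3$ and $\rho(X) = 3$ are genuinely used.
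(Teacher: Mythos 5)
This survey does not actually prove Theorem~\ref{thm:LOP}; it only states it with a reference to \cite{LOP13}, so your proposal has to be measured against the argument there and against the hints in the surrounding text. Your setup is the correct one: replace $\Aut(X)$ by its image $\mathcal A(X)\subseteq\GL(N^1(X))$ (finite kernel), which preserves the lattice, the nef cone and the cubic intersection form $x\mapsto x^3$ on a $3$-dimensional space, and classify such subgroups of $\GL_3(\Z)$. But the core of your argument does not work as described. First, the Hessian of a ternary cubic is again a cubic, not a conic, so there is no canonical conic attached to a ``nondegenerate'' cubic; moreover the stabilizer in $\GL_3(\R)$ of a smooth plane cubic is \emph{finite}, so the nondegenerate case is the trivial one and your intended embedding into $\mathrm{PGL}_2(\R)$ has nothing to act on. The infinite groups arise precisely in the degenerate/reducible cases, which you propose to handle by ``reducing the rank and quoting Theorem~\ref{thm:Og-aut} or~\ref{thm:LP1}.'' That reduction is not legitimate: those are geometric statements about Calabi--Yau \emph{manifolds} with $\rho=2$, not about an invariant rank-$2$ sublattice of $N^1(X)$ for a threefold with $\rho(X)=3$ --- there is no variety to apply them to. One must instead analyse directly the stabilizers, in the lattice, of degenerate ternary cubics together with the cone (this is where Pell-type quadratic units and the virtually-$\Z$ conclusion actually come from), and in several of these cases the second invariant which the survey itself flags --- the linear form $c_2(X)\cdot({-})$, cf.\ Wilson's theorem stated right after Theorem~\ref{thm:LOP} --- is what makes the analysis close; your proposal never uses it.

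Second, your key mechanism --- ``the fact that $\Nef(X)$ is a sharp convex cone is what rules out non-elementary infinite groups'' --- is false as a general principle. For a projective K3 surface with $\rho=3$ whose Picard lattice represents no $(-2)$, the nef cone is the full round positive cone and $\Aut(X)^*$ is a finite-index subgroup of $O(\NS(X))$, hence a non-elementary discrete group preserving a sharp convex cone and a lattice. So discreteness, integrality and cone-invariance alone cannot yield virtual cyclicity; what does the work in the Calabi--Yau case is that the invariant form is \emph{cubic} rather than Lorentzian quadratic. Concretely, Birkhoff--Perron--Frobenius gives every infinite-order $g\in\mathcal A(X)$ a leading eigenvalue $\lambda(g)>1$ with eigenvector on $\partial\Nef(X)$, and invariance of the cubic form constrains the spectrum (forcing eigenvalues of the shape $\lambda,1,\lambda^{-1}$ in the relevant cases); one then shows that two such elements must share eigenvectors lying on the boundary of the cone and in the null locus of the cubic, hence commute up to finite index, which is what bounds the rank by $1$. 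This eigenvalue analysis is the actual technical heart of \cite{LOP13}, and it is exactly the step your proposal leaves unsupplied.
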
 

Currently, no example of $X$ is known with $\Aut (X)$ infinite in the context of Theorem \ref{thm:LOP}. However, Borcea \cite{Bor91a} exhibited an example of a Calabi-Yau threefold with $\rho(X) = 4$ with infinite automorphism group. For examples with large Picard number, see \cite{GM93,OT15}. We remark that if $\pi_1(X) $ is infinite, then $\Aut (X)$ is finite, see \cite{LOP13} for references. 

\vskip .2cm 

Wilson \cite{Wi94} related the Chern class $c_2(X)$ to $\Aut (X)$: 

\begin{thm} 
Let $X$ be a weak Calabi-Yau threefold such that $c_2(X) $ lies in the interior of $\NEb(X),$ i.e.\ $D \cdot c_2(X) > 0 $ for all nef $\R$-divisors $D \neq 0$. Then $\Aut (X)$ is finite. 
\end{thm}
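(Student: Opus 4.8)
The plan is to study the natural action of $\Aut(X)$ on $N^1(X)$ and prove that both the image $\mathcal{A}(X)\subseteq\GL(N^1(X))$ and the kernel of $\Aut(X)\to\GL(N^1(X))$ are finite; the theorem follows at once.

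The first and main step is the finiteness of $\mathcal{A}(X)$, and this is where the hypothesis on $c_2(X)$ is used. For $g\in\Aut(X)$ the differential gives a canonical isomorphism $g^*T_X\cong T_X$, so functoriality of Chern classes yields $g^*c_2(X)=c_2(X)$; hence the linear functional $\ell\colon N^1(X)_\R\to\R$, $\ell(D)=D\cdot c_2(X)$, is $\mathcal{A}(X)$-invariant, and $\mathcal{A}(X)$ also preserves the lattice $N^1(X)$ and the cone $\Nef(X)$. By assumption $\ell>0$ on $\Nef(X)\setminus\{0\}$, and $\Nef(X)$ is a closed cone, so the slice $K:=\{x\in\Nef(X):\ell(x)\le 1\}$ is compact; it has non-empty interior because it meets the ample cone. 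Therefore $\mathcal{A}(X)$ is a subgroup of $\GL(N^1(X)_\R)$ stabilising a compact convex body with non-empty interior, hence is contained in a compact subgroup (a standard fact, via the Minkowski gauge of $K$ centred at its barycenter, which is fixed by $\mathcal{A}(X)$). A relatively compact subgroup of $\GL(N^1(X))\cong\GL_{\rho(X)}(\Z)$ is finite, since the latter is discrete in $\GL_{\rho(X)}(\R)$; so $\mathcal{A}(X)$ is finite.

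For the kernel I would use that $X$ is a weak Calabi--Yau \emph{threefold}. Serre duality with $K_X\cong\OO_X$ gives $h^1(X,\OO_X)=h^2(X,\OO_X)$, so both vanish; combined with $\Omega_X^2\cong T_X$ (again because $K_X\cong\OO_X$) and Hodge symmetry this gives $h^0(X,T_X)=h^{2,0}(X)=h^2(X,\OO_X)=0$, so $\Aut(X)$ has trivial identity component and is discrete. Moreover $\Pic^0(X)=0$, so $\Pic(X)=\NS(X)$ is finitely generated with torsion subgroup $T$, and the automorphisms of $\Pic(X)$ inducing the identity on $\Pic(X)/T$ form a finite group; hence, up to finite index, the kernel of $\Aut(X)\to\GL(N^1(X))$ equals $G_0:=\{g\in\Aut(X):g^*L\cong L\text{ for all }L\in\Pic(X)\}$. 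Linearising an ample $L$, the group $G_0$ embeds as a subgroup of the polarised automorphism group $\Aut(X,L^{\otimes m})$, which is of finite type with Lie algebra contained in $H^0(X,T_X)=0$, hence finite. So the kernel is finite, and $\Aut(X)$ is finite.

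The argument is soft; its only real content is that the hypothesis ``$c_2(X)$ in the interior of $\NEb(X)$'' makes the slice $K$ compact, after which the integrality of the action on $N^1(X)$ forces finiteness. I expect the step needing the most care to be the finiteness of the kernel: one genuinely needs $\dim X=3$ together with $K_X\cong\OO_X$ in order to get $h^2(X,\OO_X)=0$ and hence $h^0(X,T_X)=0$, since $H^1(X,\OO_X)=0$ alone does not even make $\Aut(X)$ discrete.
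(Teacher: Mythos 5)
Your argument is correct. The paper states this result only as a citation of Wilson \cite{Wi94} and does not reproduce a proof, but your two-step argument --- finiteness of the image $\mathcal A(X)\subseteq\GL(N^1(X))$ because it preserves the integral structure and the compact slice $\{D\in\Nef(X)\mid D\cdot c_2(X)\le 1\}$ cut out by the $\Aut(X)$-invariant functional $c_2(X)$, followed by finiteness of the kernel via $h^0(X,T_X)=h^{2,0}(X)=h^2(X,\OO_X)=0$ and the polarised automorphism group --- is exactly the standard route and agrees in substance with Wilson's original proof; note also that your kernel step is essentially an instance of the paper's Proposition \ref{pro:basic}.
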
 

If $X$ is a Calabi-Yau threefold which is a smooth anticanonical section of a Fano $4$-fold, then it was checked \cite{OP98} that $c_2(X) > 0$, hence:

\begin{thm} 
Let $Z$ be a Fano $4$-fold and let $X \in | {-}K_Z| $ be a smooth divisor. Then $\Aut(X) $ is finite. 
\end{thm}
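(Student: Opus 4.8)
The plan is to combine the two preceding results: the theorem of Wilson, which reduces finiteness of $\Aut(X)$ to the positivity statement $c_2(X) \cdot D > 0$ for every nonzero nef $\R$-divisor $D$ on the Calabi--Yau threefold $X$, and the computation of \cite{OP98} that $c_2(X)$ lies in the interior of $\NEb(X)$ when $X \in |{-}K_Z|$ is a smooth anticanonical section of a Fano fourfold $Z$. So the proof is essentially a two-line deduction, but let me unpack which facts have to be assembled. First I would note that $X$ is indeed a Calabi--Yau (in particular weak Calabi--Yau) threefold: since $Z$ is Fano, adjunction gives $K_X = (K_Z + X)|_X = \OO_X$, and the Kodaira vanishing theorem applied on $Z$ to the exact sequence $0 \to \OO_Z(K_Z) \to \OO_Z \to \OO_X \to 0$ forces $H^q(X,\OO_X) = 0$ for $q = 1,2$, so the hypotheses of Wilson's theorem are met.

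Next I would invoke the positivity of $c_2$. The point of \cite{OP98} is that for a smooth anticanonical section $X$ of a Fano fourfold $Z$, one has $c_2(X) \cdot C > 0$ for every curve $C \subseteq X$, equivalently $c_2(X)$ lies in the \emph{interior} of the pseudoeffective-curve dual, i.e.\ $c_2(X) \cdot D > 0$ for all nef $\R$-divisors $D \neq 0$ on $X$. Concretely this is proved by relating $c_2(X)$ to $c_2(Z)$ and $-K_Z$ via the normal bundle sequence $0 \to T_X \to T_Z|_X \to \OO_X({-}K_Z) \to 0$, which yields $c_2(X) = c_2(Z)|_X + ({-}K_Z)\cdot({-}K_Z)|_X$ in $H^4(X)$ after a short Chern class bookkeeping; the first term is nef-times-effective-positive because $Z$ is Fano (semipositivity of $c_2$ of Fano manifolds, or one uses that $c_2(Z) \cdot ({-}K_Z)^2 > 0$ type estimates survive restriction), and the second term is strictly positive against any nonzero nef class since ${-}K_Z|_X$ is ample on $X$ and $X$ has dimension three. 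I would present this as a citation to \cite{OP98} rather than redo it, since it is the genuine input and the excerpt explicitly allows assuming it.

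Finally, with $c_2(X)$ in the interior of $\NEb(X)$, Wilson's theorem (stated just above) applies verbatim and gives that $\Aut(X)$ is finite, which is exactly the assertion. I would also remark, for completeness, that the same argument shows $\Bir(X) = \Aut(X)$ is finite once one knows that $\NEb(X)$ is rational polyhedral here — which follows from Theorem \ref{thm:hypersurface}, since $j_*\colon \NEb(X) \to \NEb(Z)$ is a bijection and $\NEb(Z)$ is rational polyhedral as $Z$ is Fano — so in fact one could bypass Wilson entirely and argue directly: a rational polyhedral $\Aut(X)$-invariant cone with only finitely many extremal rays forces $\Aut(X)^*$ to be finite in $\GL(N^1(X))$, and $r$ has finite kernel by Proposition \ref{pro:basic} (applied to a sum of the finitely many orbits of a fixed ample class, which is again ample and $G$-invariant). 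I expect the only real ``obstacle'' is the positivity computation $c_2(X) > 0$, which is why I would lean on \cite{OP98}; everything else is formal, and the cleanest exposition is simply: ``$X$ is Calabi--Yau by adjunction and Kodaira vanishing; $c_2(X) > 0$ by \cite{OP98}; apply the previous theorem.''
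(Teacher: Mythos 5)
Your proof is correct and is exactly the paper's argument: the theorem is the immediate combination of Wilson's criterion (the theorem stated just before) with the computation of \cite{OP98} that $c_2(X)$ lies in the interior of the dual of $\NEb(X)$ for a smooth anticanonical section of a Fano fourfold, with the Calabi--Yau hypotheses checked by adjunction and Kodaira vanishing as you do. One small slip in your parenthetical sketch of \cite{OP98}: since $c_1(X)=0$, the normal bundle sequence gives simply $c_2(X)=c_2(Z)|_X$ (the cross term is $c_1(X)\cdot({-}K_Z)|_X=0$, so there is no extra $({-}K_Z)^2|_X$ summand), which is why the positivity is a genuine computation about $c_2(Z)$ rather than a formal consequence of ampleness of ${-}K_Z|_X$ --- but as you explicitly defer to the citation for this input, your proof stands as written.
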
 

We describe another interesting series of examples. Let $P_{n+1} := (\bP^1)^{n+1}$ for some integer $n \geq 3$ and let $X_n$ be a general element of $|{-}K_{P_{n+1}}|$, i.e.\ a generic hypersurface of multidegree $(2, 2, \ldots, 2)$ in the $(n+1)$-dimensional Fano manifold $P_{n+1}$. Then $X_n$ is a Calabi-Yau manifold of dimension $n$ with $\rho(X_n) = n+1$. As already observed, $\Nef(X_n) \simeq \Nef(P_{n+1})$ and Conjecture 3.2 (1) is trivially holds. Moreover:

\begin{thm} \cite{CO15} \label{thm:CantatOguiso}
\begin{enumerate} 
\item  $\Aut(X_n)$ is trivial.
\item $\Bir(X_n)$ is the free product of the $n+1$ involutions $\iota_i$, $1 \leq i \leq n+1$, of $X_n$.
\item The movable Cone Conjecture \ref{conj1}(2) holds for $X_n$.
More precisely, the nef cone $\Nef(X_n)$ is a fundamental domain for the action of $\Bir(X_n)$ on $\overline{\Mov}^e (X_n)$.  
\end{enumerate} 
\end{thm}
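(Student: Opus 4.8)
The plan is to pass to the linear algebra on $N^1(X_n)\cong\R^{n+1}$ furnished by Kollár's Theorem~\ref{thm:hypersurface} and then to recognise $\Bir(X_n)$ as a reflection group acting on a Lorentzian form. Write $e_i=\mathrm{pr}_i^*\OO_{\bP^1}(1)|_{X_n}$ for $0\le i\le n$; by Theorem~\ref{thm:hypersurface} these generate $\Nef(X_n)$, which is the simplicial cone $\{\sum x_ie_i:x_i\ge 0\}$, and since each $e_i$ is effective we have $\Nef(X_n)\subseteq\Eff(X_n)$. As the equation defining $X_n$ has degree $2$ in each factor, the projection forgetting the $i$-th factor is a generically finite morphism $p_i\colon X_n\to Q_i:=(\bP^1)^n$ of degree $2$; its Stein factorisation $X_n\xrightarrow{\mu_i}Y_i\xrightarrow{\nu_i}Q_i$ has $\mu_i$ a crepant birational contraction, which on the terminal $\Q$-factorial variety $X_n$ must be small. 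Hence the covering involution $\iota_i\colon X_n\dashrightarrow X_n$ of $p_i$ is a pseudo-automorphism, and in particular acts on $N^1(X_n)$ preserving $\overline{\Mov}(X_n)$ and $\Eff(X_n)$.

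I would first determine $R_i:=r(\iota_i)\in\GL(N^1(X_n))$. Since $\mathrm{pr}_j$ factors through $p_i$ for $j\ne i$ one gets $R_i(e_j)=e_j$; and a short intersection-number computation on $X_n$ — using that $(e_{k_1}\cdots e_{k_n})_{X_n}$ equals $2$ when the $k$'s are pairwise distinct and $0$ otherwise — gives $R_i(e_i)=-e_i+2\sum_{j\ne i}e_j$. Thus $R_i$ is the reflection fixing the wall $\{x_i=0\}$ with $(-1)$-eigenvector $v_i=2e_i-\mathbf 1$, where $\mathbf 1:=\sum_k e_k$. One then checks that there is a unique (up to scale) symmetric bilinear form $q$ on $N^1(X_n)_\R$ making every $R_i$ an isometry, namely the one with Gram matrix $J-(n-1)I$ in the basis $(e_i)$ (off-diagonal entries $1$, diagonal entries $2-n$); its signature is $(1,n)$, and the restriction of $q$ to each plane $\langle v_i,v_j\rangle$ is degenerate. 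Consequently, with respect to $q$ (up to sign), the maps $R_0,\dots,R_n$ realise the standard geometric representation of the universal Coxeter group $W=\langle s_0,\dots,s_n\mid s_i^2\rangle\cong(\Z/2)^{*(n+1)}$, in which all pairs of walls are parallel; this representation is faithful by Tits's theorem, and by Vinberg's theory of linear reflection groups the union $U:=\bigcup_{w\in W}w\cdot\Nef(X_n)$ is a convex cone for which $\Nef(X_n)$ is a strict fundamental domain.

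Next I would match this picture with the birational geometry, proving $\overline{\Mov}^e(X_n)=U$ and $\Bir(X_n)=\langle\iota_0,\dots,\iota_n\rangle$. The inclusion $U\subseteq\overline{\Mov}^e(X_n)$ is immediate, since each element of $\langle\iota_i\rangle$ is a pseudo-automorphism preserving $\overline{\Mov}(X_n)$ and $\Eff(X_n)$ and $\Nef(X_n)\subseteq\overline{\Mov}(X_n)\cap\Eff(X_n)$. For the converse one uses that $\overline{\Mov}(X_n)$ is the closure of the union of the pulled-back nef cones of all small $\Q$-factorial modifications of $X_n$, that any two of these are joined by a chain of flops, and that the flopping contractions of $X_n$ (those corresponding to the facets of the simplicial cone $\Nef(X_n)$) are precisely the $n+1$ morphisms $\mu_i$, each of whose flop is $\iota_i$ and returns $X_n$ — the ``explicit calculation'' here being entirely analogous to the $\bP^3\times\bP^3$ example preceding this theorem. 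Hence every small $\Q$-factorial modification of $X_n$ is isomorphic to $X_n$ through a word in the $\iota_i$, so $\overline{\Mov}^e(X_n)=U$; and any $f\in\Bir(X_n)$ carries the chamber $\Nef(X_n)$ onto some chamber $g^*\Nef(X_n)$ with $g\in\langle\iota_i\rangle$, so $g^{-1}f$ fixes $\Nef(X_n)$, hence is biregular and lies in $\Aut(X_n)$. This gives (2), and — granted $\Aut(X_n)=\{1\}$ — also (3), with fundamental domain $\Nef(X_n)$. Assertion (1) is proved separately: $r$ has finite kernel (an automorphism acting trivially on $N^1$ fixes an ample class, hence lies in the finite group $\Aut(X_n,L)$), its image permutes the rays $\R_{\ge 0}e_i$, and an automorphism inducing a permutation $\sigma$ of these rays must be the restriction to $X_n$ of an element of $\mathrm{PGL}_2^{n+1}\rtimes S_{n+1}=\Aut\big((\bP^1)^{n+1}\big)$, which for a general $X_n$ forces that element — and hence the automorphism — to be the identity.

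I expect the main obstacle to be the inclusion $\overline{\Mov}^e(X_n)\subseteq U$, that is, ruling out small $\Q$-factorial modifications of $X_n$ that are not reached by the $\iota_i$: this is a global statement on a variety of Picard number $n+1$ and combines the MMP input (the movable cone is tiled by the nef cones of these modifications, joined by flops) with the explicit verification that the only flopping contractions are the $\mu_i$ and that flopping along them recovers $X_n$. A secondary point requiring care is the boundary behaviour: one must check that the accumulation rays of the chamber tiling — which lie on the light cone of the Lorentzian form $q$ — are not effective, so that $\overline{\Mov}^e(X_n)$ is exactly $U$ rather than its closure.
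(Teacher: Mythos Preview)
Your proposal is correct and follows essentially the same route as the paper's sketch: the key observation is that the covering involutions $\iota_i$ are precisely all the flops of $X_n$, and then Kawamata's decomposition theorem (Theorem~\ref{thm:Kawamata}) yields the structure of $\Bir(X_n)$ as generated by the $\iota_i$ and $\Aut(X_n)$. Your additional Coxeter-group/Vinberg analysis of the reflections $R_i$ on $N^1(X_n)$ with respect to the Lorentzian form supplies the details needed for the free-product assertion in (2) and the fundamental-domain statement in (3), which the paper's brief sketch leaves implicit in its citation of \cite{CO15}.
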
 

This seems to be the only series of examples in any dimension $\ge 3$ for which the movable Cone Conjecture \ref{conj1}(2) has been checked. 

To explain the proof, consider the involution $\iota_i$, defined as the covering involution of the projection $X_n \to (\bP^1)^n_{i}$, where $(\bP^1)^n_{i}$ is obtained from $(\bP^1)^{n+1}$ by deleting the $i$-th factor. One of the essential parts of the proof is to observe that the $n+1$ birational involutions $\iota_i \in \Bir(X_n)$ are at the same time all possible flops of $X_n$. Then one can apply the following fundamental theorem due to Kawamata:

\begin{thm} \cite{Ka08} \label{thm:Kawamata} 
Let $Y$ be a terminal minimal model. Then any $f \in \Bir(Y)$ is decomposed as 
$$f = \varphi \circ \gamma_{m-1} \circ \cdots \circ \gamma_0,$$ 
where $Y_0 = Y = Y_m$, $\gamma_i \colon Y_i \dashrightarrow Y_{i+1}$ are flops between minimal models $Y_i$ and $Y_{i+1}$, and $\varphi \in \Aut(X)$. 
\end{thm}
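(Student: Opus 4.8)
The plan is to reduce the statement to the principle that any birational map between two $\Q$-factorial terminal minimal models is a composition of flops, and then to absorb the residual ambiguity into the automorphism $\varphi$. So fix $f\in\Bir(Y)$. The first point is the well-known fact, coming from the negativity lemma together with nefness of $K_Y$, that $f$ is an isomorphism in codimension one: on a common resolution $p\colon W\to Y$, $q\colon W\to Y$ with $f=q\circ p^{-1}$ one gets $p^{*}K_Y=q^{*}K_Y$, so no prime divisor is $f$- or $f^{-1}$-exceptional. Consequently $f_{*}$ preserves $N^{1}(Y)$ and $\overline{\Mov}^e(Y)$, and it does not change the section ring of any divisor class.

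Next I would produce the flops by a minimal model program. Fix a general ample $\Q$-divisor $A$ on the target copy of $Y$, let $H=f^{-1}_{*}A$ be its proper transform on the source copy (a big movable $\Q$-divisor), and choose $0<\varepsilon\ll 1$ with $(Y,\varepsilon H)$ terminal; its boundary is big. Running a $(K_Y+\varepsilon H)$-MMP with scaling of an ample divisor — which terminates by \cite{KM98} in dimension three, and otherwise by the general termination and finiteness theorems of the MMP — yields a chain $Y=Y_0\dashrightarrow Y_1\dashrightarrow\cdots\dashrightarrow Y_m$ with $K_{Y_m}+\varepsilon H_m$ nef. Since $f_{*}(K_Y+\varepsilon H)=K_Y+\varepsilon A$ is ample, the target copy of $Y$ is the canonical model of $(Y,\varepsilon H)$, and since any two minimal models of a klt pair with big boundary are isomorphic in codimension one, $Y_m$ is isomorphic in codimension one to the target $Y$, hence also to $Y_0$. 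As an MMP can only contract prime divisors and never creates them, it contracts none: every step is a flip, and every $Y_i$ is $\Q$-factorial terminal. Finally, $K_{Y_m}+\varepsilon H_m$ is nef and big, hence semiample by the base-point-free theorem, so it defines a small birational morphism from $Y_m$ onto the canonical model; as that model is $\Q$-factorial, the morphism is an isomorphism and we may take $Y_m=Y$.

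It remains to see that each of the flips $\gamma_i\colon Y_i\dashrightarrow Y_{i+1}$ is a \emph{flop}, i.e.\ that the contracted ray $R_i$ satisfies $K_{Y_i}\cdot R_i=0$. If $K_Y\equiv 0$ — the case relevant to this survey — this is automatic, since then $K_{Y_i}\equiv 0$ for every $Y_i$, so every step of the MMP (which for $K_Y\equiv 0$ is the $\varepsilon H$-MMP) is a $K$-trivial flop. In general one argues inductively: $K_{Y_0}=K_Y$ is nef, and the delicate point is that the program can be run so that the extremal ray contracted at each step is $K_{Y_i}$-trivial (and $H_i$-negative); the flop of such a ray again produces a $\Q$-factorial terminal variety with nef canonical class, so $K_{Y_{i+1}}$ is nef and the induction continues. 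Granting this, the $\gamma_i$ are flops between minimal models, and $\varphi:=f\circ(\gamma_{m-1}\circ\cdots\circ\gamma_0)^{-1}$ is a small birational self-map of $Y$ carrying an ample class to an ample class, hence an automorphism; thus $f=\varphi\circ\gamma_{m-1}\circ\cdots\circ\gamma_0$. The main obstacle is precisely this last claim together with the termination used above: one needs that the relevant MMP terminates and that its steps can be chosen $K$-trivial — equivalently, that along the segment in $\overline{\Mov}^e(Y)$ joining the ample chamber of $Y$ to that of $f$ the Mori chamber decomposition is a locally finite union of nef cones of minimal models with adjacent chambers related by flops. In dimension three this is part of the classical theory of threefold minimal models; in general it rests on the finiteness of minimal models and termination of flips with scaling, while in the Ricci-flat setting the $K$-triviality of the steps comes for free, so only the termination input is really needed.
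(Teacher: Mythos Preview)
The paper does not prove this theorem; it is quoted from \cite{Ka08} as a black box and then applied in the discussion of Theorem \ref{thm:CantatOguiso}. So there is no proof in the paper to compare against, and one can only assess your sketch on its own merits and against Kawamata's original argument.

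Your outline is structurally the same as Kawamata's: pull back a general ample divisor to obtain a big movable $H$, run a $(K_Y+\varepsilon H)$-MMP with scaling, observe that no divisor is contracted so every step is small, and identify the output with the target using that a small birational morphism to a $\Q$-factorial variety is an isomorphism. The deduction that $\varphi$ is an automorphism is also correct. The one substantive gap is exactly the point you flag as ``the delicate point'': showing that each contracted ray is $K_{Y_i}$-trivial. You assert that ``the program can be run so that'' this holds, but give no mechanism; a priori a $(K_{Y_i}+\varepsilon H_i)$-negative extremal ray may well be $K_{Y_i}$-positive when $K_{Y_i}$ is merely nef, and nothing you have written excludes this. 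This is not a technicality---it is essentially the entire content of the theorem. Your final paragraph correctly names the ingredients that resolve it (finiteness of models and the chamber decomposition of $\overline{\Mov}(Y)$ into nef cones of marked minimal models, adjacent chambers meeting along $K$-trivial walls), but restating the conclusion in those terms is not the same as proving it. In the Ricci-flat case $K_Y\equiv 0$, which is the only case the survey actually uses, the $K$-triviality is automatic as you note, and then your sketch is complete modulo termination with scaling for klt pairs with big boundary (which is BCHM rather than \cite{KM98}). For the general statement you would need to supply Kawamata's actual argument for $K$-triviality, which you have not done.
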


It is natural to ask the following special but interesting case of the movable Cone Conjecture, as a possibly tractable generalisation of Theorem \ref{thm:CantatOguiso}:

\begin{que}  \label{que:CantatOguiso}
Let $Z$ be a smooth Fano manifold of dimension at least $4$. Assume that the linear system $|{-}K_Z|$ is free. Does the movable Cone Conjecture hold for a general $X \in |{-}K_Z|$?
\end{que} 

\section{Abundance Conjecture} \label{sec:abundance}

\subsection{Abundance for klt pairs}\label{subsection:4.1}

The Abundance Conjecture is one of the most important open problems in higher dimensional geometry of projective varieties in characteristic zero. Its importance stems from the fact that the full Minimal Model Program would imply that, birationally, all varieties are built of varieties whose curvature is either positive, flat or negative.

Recall that given a $\mathbb Q$-factorial projective klt pair $(X,\Delta)$, the Minimal Model Program (MMP) predicts that either $(X,\Delta)$ has a birational model which admits a Mori fibration, or $(X,\Delta)$ has a birational model $(X',\Delta')$ with klt singularities such that $K_{X'}+\Delta'$ is nef; the pair $(X',\Delta')$ is called a \emph{minimal model} of $(X,\Delta)$. The following \emph{abundance conjecture} then predicts that all minimal models are \emph{good model}. 

\begin{con} 
Let $(X,\Delta)$ be a klt pair. If $K_X + \Delta $ is nef, then $K_X + \Delta $ is semiample, i.e.\ some multiple $m(K_X+\Delta)$ is basepoint free. 
\end{con}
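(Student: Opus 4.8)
The plan is to decompose the conjecture into the \emph{Nonvanishing Conjecture}, asserting that $\kappa(X,K_X+\Delta)\ge 0$, together with the assertion that the numerical dimension $\nu(K_X+\Delta)$ equals the Kodaira dimension $\kappa(X,K_X+\Delta)$, and then to bootstrap semiampleness from these two inputs by induction on $\dim X$ via the canonical bundle formula; the conjecture being a theorem in dimension at most three (Miyaoka, Kawamata, Keel--Matsuki--McKernan), the inductive set-up is reasonable. Throughout I would assume the full Minimal Model Program in dimensions $<\dim X$, so that every klt pair of smaller dimension with pseudoeffective log canonical class admits a good minimal model. After passing to a $\Q$-factorialisation and a small birational modification, I may assume $(X,\Delta)$ is $\Q$-factorial with $K_X+\Delta$ nef, and it suffices to produce a semiample $\Q$-divisor that is $\Q$-linearly equivalent to $K_X+\Delta$. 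The two extreme cases are under control: if $\nu(K_X+\Delta)=\dim X$ then $K_X+\Delta$ is nef and big and the Kawamata--Shokurov Base-Point-Free Theorem gives semiampleness at once; if $\nu(K_X+\Delta)=0$ one expects $K_X+\Delta\equiv 0$ relatively over a suitable fibration and semiampleness to follow from the numerically trivial case (Nakayama, Gongyo), although this already conceals a nonvanishing-type statement. So the real content is the intermediate range $0<\nu(K_X+\Delta)<\dim X$, on top of Nonvanishing.

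\emph{Nonvanishing.} The first main task is to show that $h^0\big(X,\lfloor m(K_X+\Delta)\rfloor\big)>0$ for some $m>0$. I would attack this with the extension-theorem machinery developed for the existence of minimal models and finite generation: lifting pluricanonical sections from a log resolution of a component of $\Delta$ or of a minimal log canonical centre, controlled by multiplier ideals and by the restriction of adjoint linear systems, together with boundedness of the length of extremal rays. The nefness of $K_X+\Delta$ should be exploited so as to avoid running an MMP whose termination is not available. This is the step for which no purely MMP-theoretic argument is presently known to suffice, and where genuinely new input seems to be required.

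\emph{From a section to semiampleness.} Assuming Nonvanishing, pick $0\le D\sim_\Q K_X+\Delta$, and let $g\colon X'\to Z$ be a resolution of the Iitaka fibration of $K_X+\Delta$. The Fujino--Mori canonical bundle formula yields $K_{X'}+\Delta'\sim_\Q g^*(K_Z+B_Z+M_Z)$, with $B_Z$ the discriminant divisor, $M_Z$ the moduli part, $K_Z+B_Z+M_Z$ big on $Z$, and $(Z,B_Z)$ generalised klt with nef moduli $b$-divisor. Running the MMP on $Z$ for this generalised pair and inducting on dimension produces a good model; the pullback to $X'$ of a semiample divisor $\Q$-linearly equivalent to $K_Z+B_Z+M_Z$ is then $\Q$-linearly equivalent to $K_{X'}+\Delta'$, whence $K_{X'}+\Delta'$, and therefore $K_X+\Delta$, is semiample. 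For this to capture all of the positivity one needs $\kappa(X,K_X+\Delta)=\nu(K_X+\Delta)$; deducing this from Nonvanishing is a second substantial task, to be handled by restricting $K_X+\Delta$ to a very general fibre of $g$, applying the numerically trivial case there, and propagating the conclusion back up.

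\emph{Main obstacles.} I expect two genuinely hard points, both essentially orthogonal to the MMP formalism. The first is the Nonvanishing Conjecture itself: it is open already for merely pseudoeffective divisors once $\dim X\ge 4$, and the successful low-dimensional proofs rely on positivity of Hodge-theoretic origin (variations of Hodge structure, Simpson-type semipositivity) rather than on cone and base-point-free technology, so a conceptual advance is needed here. The second is the requirement in the previous step that the moduli part $M_Z$ be $b$-semiample --- essentially the $b$-semiampleness conjecture of Prokhorov--Shokurov, or effective adjunction --- which is what makes the inductive step close, and which is currently known only in low relative dimension or under extra hypotheses. Modulo these two inputs, together with the MMP in lower dimensions, the remaining ingredients are the now-standard Base-Point-Free Theorem and canonical-bundle-formula technology.
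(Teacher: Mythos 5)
This statement is the Abundance Conjecture for klt pairs; the paper states it as an open conjecture and gives no proof, only a survey of the known cases (dimension at most three, log general type via the basepoint-free theorem, numerical dimension zero via Nakayama) and of the partial new approach of \cite{LP16}. Your proposal is likewise not a proof, and you say so: you correctly identify the standard decomposition into Nonvanishing, the equality $\kappa(X,K_X+\Delta)=\nu(K_X+\Delta)$, and inductive descent along the Iitaka fibration via the canonical bundle formula, and you honestly flag that Nonvanishing and the $b$-semiampleness of the moduli part are open. Those two flags are exactly where the mathematics stops: conditional reductions of this shape do exist in the literature, but the two inputs you assume are the entire content of the conjecture in dimension at least four, so no gap has been closed.

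Beyond that, one step of your sketch is circular as written. You propose to deduce $\kappa(X,K_X+\Delta)=\nu(K_X+\Delta)$ by restricting to a very general fibre $F$ of the Iitaka fibration and ``applying the numerically trivial case there.'' But what you know on $F$ is $\kappa\bigl(F,(K_X+\Delta)|_F\bigr)=0$, whereas the numerically trivial abundance of Nakayama and Gongyo takes $\nu\bigl(F,(K_X+\Delta)|_F\bigr)=0$ as a \emph{hypothesis}; showing that the restriction is numerically trivial is precisely the statement $\kappa=\nu$ you are trying to prove. Note also that the paper's own partial progress takes a genuinely different route from yours: instead of the canonical bundle formula, \cite{LP16} studies global sections of $\Omega_X^{[q]}\otimes\OO_X(mK_X)$, uses the Campana--P\u{a}un pseudoeffectivity of quotients of the (log) cotangent bundle to produce decompositions $mK_X\sim N_m+F$ with $F$ pseudoeffective, runs an MMP with scaling for a pair $(X,\varepsilon N_k)$, and converts the resulting vanishings into a contradiction with $\chi(X,\OO_X)\neq0$ via the Demailly--Peternell--Schneider hard Lefschetz theorem. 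That route yields unconditional or metrically conditioned nonvanishing statements that your purely MMP-theoretic outline cannot reach; your route, if its two conjectural inputs were granted, would conversely give the full semiampleness statement.
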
 

In particular, if abundance holds on a minimal model $(X,\Delta)$, then there exists a morphism with connected fibres $f\colon X\to Z$ such that $K_X+\Delta\sim_\bQ f^*A$ for some ample $\bQ$-divisor $A$ on $Z$. Furthermore, by the main result of \cite{Amb05a}, there exists a $\bQ$-divisor $\Gamma\geq0$ on $Z$ such that the pair $(Z,\Gamma)$ is klt and $K_X+\Delta\sim_\bQ f^*(K_Z+\Gamma)$. Note that, in general, $\Gamma\neq0$ even when $\Delta=0$; this explains why it is very natural to consider the MMP for pairs and not only for varieties. In particular, the Abundance Conjecture allows an inductive approach to the birational geometry of algebraic varieties.

The ``classical'' case is when $\Delta = 0$ and $X$ has terminal singularities. In that case, the Abundance Conjecture was proved for threefolds by Miyaoka and Kawamata in \cite{Miy87,Miy88a,Miy88b,Kaw92}, and abundance for canonical fourfolds is known when $\kappa(X,K_X)>0$ by \cite{Kaw85}. The corresponding generalisation to threefold klt pairs was established in \cite{KMM94}. In arbitrary dimension, until very recently the only results were the basepoint free theorem, which proves abundance for klt pairs of log general type \cite{Sho85,Kaw85b}, and abundance for varieties with numerical dimension $0$, see \cite{Nak04}. 

\medskip

The problem often splits into two parts: 
\begin{itemize}
\item \emph{nonvanishing}: showing that $\kappa (X,K_X + \Delta) \geq 0$, and
\item \emph{semiampleness}: showing that if $\kappa (X,K_X +\Delta) \geq 0 $, then $K_X + \Delta$ is semiample. 
\end{itemize}
The proof of nonvanishing for threefolds by Miyaoka \cite{Miy87,Miy88a} is an ingenious application of his inequality of Chern classes to the Riemann-Rich formula, together with the use of semistability of vector bundles and the Yang-Mills theory developed by Donaldson; a clear overview of these ideas can be found in \cite{MP97}. On the other hand, the proof of semiampless for threefolds by Miyaoka and Kawamata \cite{Miy88b,Kaw92} is a careful analysis of a section $D\in H^0(X,mK_X)$ and uses deformation theory to improve how components of $D$ sit inside of $X$. Unfortunately, both of these proofs use surface and threefold geometry crucially, and cannot be generalised to higher dimensions in a straightforward manner.

\medskip

In \cite{LP16}, a new approach to abundance in higher dimensions is introduced. The main idea is that the growth of global sections of the sheaves $\Omega_X^{[q]}\otimes\OO_X(mK_X)$ should correspond to the growth of sections of $\OO_X(mK_X)$, where $\Omega_X^{[q]}=(\bigwedge^q \Omega^1_X)^{**}$ is the sheaf of reflexive $q$-differentials on a normal variety $X$. In the context of nonvanishing, the main technical result of \cite{LP16} is the following (a terminal variety being a shorthand for a normal variety with at most terminal singularities):

\begin{thm}\label{thm:nonvanishing}
Let $X$ be a terminal projective variety of dimension $n$ with $K_X$ pseudoeffective. Assume either 
\begin{enumerate}
\item[(i)] the existence of good models for klt pairs in dimensions at most $n-1$, or 
\item[(ii)] that $K_X$ is nef and $\nu(X,K_X)=1$. 
\end{enumerate}
Assume that there exists a positive integer $q$ such that
$$ h^0\big(X,\Omega^{[q]}_X \otimes \OO_X(mK_X)\big)>0$$
for infinitely many $m$ such that $mK_X$ is Cartier. Then $\kappa(X,K_X)\geq0$.
\end{thm}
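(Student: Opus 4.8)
The plan is to argue by contradiction: suppose $\kappa(X,K_X)=-\infty$. Then $K_X\not\equiv 0$, for otherwise $X$ has numerical dimension zero and $\kappa(X,K_X)=0$ by abundance in that case \cite{Nak04}; and since $K_X$ is pseudoeffective, $X$ is not uniruled by Boucksom--Demailly--P\u{a}un--Peternell. For each of the infinitely many $m$ with $mK_X$ Cartier and $h^0\big(X,\Omega^{[q]}_X\otimes\OO_X(mK_X)\big)>0$, a nonzero section $\eta_m$ is the same as a nonzero morphism $\OO_X(-mK_X)\to\Omega^{[q]}_X$, which is injective because $\Omega^{[q]}_X$ is torsion free; saturating its image yields a rank one reflexive subsheaf $\OO_X(L_m)\subseteq\Omega^{[q]}_X$ with $L_m+mK_X$ effective. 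By the Bogomolov--Sommese vanishing theorem for klt varieties, $\kappa(X,L_m)\le q\le n-1$, so $L_m$ is never big; and since $X$ is not uniruled, generic semipositivity of $\Omega^{[1]}_X$ (Miyaoka, Campana--P\u{a}un) controls the slopes of subsheaves of $\Omega^{[\bullet]}_X$ with respect to movable classes.

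The heart of the matter is to use that sections exist for infinitely many $m$, not just one. Passing to a subsequence, fix $q$. One shows that the subsheaves $\OO_X(L_m)$ cannot be in general position: when $q=1$, since $h^0\big(X,\OO_X(kK_X)\big)=0$ for all $k\ge 1$, the wedge of any $n$ of the twisted $1$-forms $\eta_m$ vanishes, so at a general point they all lie in a common $(n-1)$-dimensional subspace of $\Omega^{[1]}_X$, and their span is a saturated subsheaf $\mathcal W\subseteq\Omega^{[1]}_X$ of rank $r\le n-1$. A finiteness statement of de Franchis type — together with the Campana--P\u{a}un theorem that the foliation defined by $\mathcal W$ has pseudoeffective canonical class — then forces $\det\mathcal W$ to have positive Iitaka dimension for infinitely many $m$ at once, and Bogomolov's theorem produces a dominant rational map $f\colon X\dashto Y$ with $0<\dim Y<n$ for which $\mathcal W$ is generically $f^*\Omega^{[1]}_Y$ and the $\eta_m$ are horizontal. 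Restricting $\eta_m$ to a general fibre $F$, on which $K_X|_F=K_F$ by adjunction, yields $h^0\big(F,\OO_F(mK_F)\big)>0$, hence $\kappa(F,K_F)\ge 0$. For $q\ge 2$ the wedge trick does not by itself produce a subsheaf of $\Omega^{[1]}_X$, and one first lowers the degree of the forms, e.g.\ by restricting to general hyperplane sections and using the conormal filtration of $\Omega^{[q]}_X$.

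It remains to propagate non-negativity of the Kodaira dimension down the fibration. Resolving $f$, we may assume it is a morphism $f\colon X'\to Y$ with $X'$ smooth, $\kappa(X',K_{X'})=\kappa(X,K_X)$, and general fibre of non-negative Kodaira dimension. Under hypothesis (i), the existence of good models for klt pairs in dimensions at most $n-1$ supplies the needed Iitaka-type additivity — through the canonical bundle formula, which produces a klt pair $(Y,\Gamma)$ with $K_{X'}\sim_\bQ f^*(K_Y+\Gamma)$ even if $Y$ is rational — and gives $\kappa(X',K_{X'})\ge\kappa(F,K_F)\ge 0$, contradicting $\kappa(X,K_X)=-\infty$. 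Under hypothesis (ii), the case $\nu(X,K_X)=1$ is so restrictive that, using Kawamata's subadditivity over curves together with the theory of nef divisors of numerical dimension one (Nakayama), one concludes $\kappa(X,K_X)\ge 0$ directly; again a contradiction.

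The step I expect to be the main obstacle is the middle one: turning the twisted differential forms into an honest fibration and controlling it uniformly over the infinitely many $m$. This combines several non-trivial inputs — pseudoeffectivity of foliated canonical classes on non-uniruled varieties (Campana--P\u{a}un), Bogomolov--Sommese vanishing in the reflexive setting on klt spaces, de Franchis-type finiteness of fibrations, and, for $q\ge 2$, a reduction to $q=1$ — and the borderline configurations, where $\det\mathcal W$ has Iitaka dimension $0$ so that no fibration is produced, must be treated separately via the Beauville--Bogomolov-type structure they impose.
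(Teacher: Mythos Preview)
Your approach diverges from the paper's at the decisive middle step, and the divergence is a genuine gap rather than an alternative route.

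The paper does not try to manufacture a fibration from the twisted forms via Castelnuovo--de Franchis or Bogomolov-type foliation arguments. Instead (see the discussion after Theorem~\ref{thm:CP11} and, for the model case, the sketch of Proposition~\ref{prop:van}) it takes the \emph{single} subsheaf $\mathcal F\subseteq\Omega^{[q]}_X$ generated by the images of \emph{all} the inclusions $\OO_X(-mK_X)\hookrightarrow\Omega^{[q]}_X$, sets $r=\rk\mathcal F$, saturates $\det\mathcal F$ inside $\bigwedge^r\Omega^{[q]}_X$ to obtain a line bundle $\OO_X(-F)$, and applies Campana--P\u{a}un once to the torsion-free quotient. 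The output is a \emph{fixed} pseudoeffective divisor $F$ together with, for infinitely many $m$, effective Weil divisors $N_m$ satisfying $N_m\sim mK_X-F$. Under hypothesis~(i) one then runs an MMP with scaling for a carefully chosen pair $(X,\varepsilon N_k)$; this yields a birational contraction $X\dashrightarrow X'$ and a fibration $f\colon X'\to Y$ with $\dim Y<n$ such that $K_{X'}\sim_\Q f^*D$ for a \emph{big} $\Q$-divisor $D$ on $Y$, whence $\kappa(X,K_X)\ge0$ at once. Under hypothesis~(ii), the relation $N_m+F\sim mK_X$ with $\nu(K_X)=1$ is handled directly by Hodge-index and divisorial Zariski decomposition arguments. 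No foliation-theoretic fibration, no reduction to $q=1$, and no Iitaka additivity are invoked.

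In your outline the passage ``de Franchis finiteness together with Campana--P\u{a}un forces $\det\mathcal W$ to have positive Iitaka dimension, and Bogomolov's theorem produces a fibration'' is not justified and, as you yourself flag, is the real obstacle. Campana--P\u{a}un makes the \emph{quotient} determinant $K_X-\det\mathcal W$ pseudoeffective, which says nothing about $\kappa(X,\det\mathcal W)$; de Franchis finiteness bounds the number of fibrations but does not create one; and the Castelnuovo--de Franchis/Bogomolov mechanism relies on the $1$-forms being \emph{closed}, which fails for twisted forms. Even granting a fibration with $\mathcal W$ generically equal to $f^*\Omega^1_Y$, your restriction step does not deliver $h^0(F,\OO_F(mK_F))>0$: a horizontal $\eta_m$ restricts on a general fibre $F$ to a section of $\OO_F^{\dim Y}\otimes\OO_F(mK_F)$, with no reason to be nonzero. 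Finally, the proposed reduction from $q\ge2$ to $q=1$ via hyperplane sections destroys control over $K_X$. The paper's device---collect all the sections into one $\mathcal F$, extract a single pseudoeffective $F$, and then run the MMP---is precisely what bypasses all of these difficulties.
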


There are two main new inputs in the proof of this result. The first is using a kind of stability of the cotangent bundle \cite{CP11,CP15}:

\begin{thm}\label{thm:CP11}
Let $(X,\Delta)$ be a log smooth projective pair, where $\Delta$ is a reduced divisor. Let $\Omega_X^1(\log\Delta)^{\otimes m}\to\mathcal Q$ be a torsion free coherent quotient for some $m\geq1$. If $K_X+\Delta$ is pseudoeffective, then $c_1(\mathcal Q)$ is pseudoeffective.
\end{thm}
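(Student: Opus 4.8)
The plan is to argue by contradiction: I would convert a failure of pseudoeffectivity of $c_1(\mathcal{Q})$ into a foliation on $X$, logarithmic along $\Delta$, which has positive slope with respect to a suitable movable curve class, and then contradict the pseudoeffectivity of $K_X+\Delta$ by showing that such a foliation forces $X$ to be swept out by free rational curves disjoint from $\Delta$.

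\emph{Step 1: from a non-pseudoeffective determinant to a negative minimal slope.} Suppose $c_1(\mathcal{Q})$ is not pseudoeffective. By the duality between the pseudoeffective cone and the closed cone of movable curves (Boucksom--Demailly--P\u{a}un--Peternell), there is a movable curve class $\alpha$ on $X$ with $c_1(\mathcal{Q})\cdot\alpha<0$, hence $\mu_\alpha(\mathcal{Q})<0$. I would then invoke the Harder--Narasimhan formalism for torsion-free sheaves with respect to the movable class $\alpha$ (available either through movable slope stability directly, or by pulling back to a modification on which $\alpha$ becomes a complete intersection of very ample divisors and applying the theorem of Mehta--Ramanathan): the quotient $\mathcal{Q}$ of $\Omega_X^1(\log\Delta)^{\otimes m}$ witnesses $\mu_\alpha^{\min}\bigl(\Omega_X^1(\log\Delta)^{\otimes m}\bigr)<0$. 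Since the tensor product of $\alpha$-semistable sheaves is again $\alpha$-semistable in characteristic zero, one has $\mu_\alpha^{\min}(E^{\otimes m})=m\,\mu_\alpha^{\min}(E)$ for $E=\Omega_X^1(\log\Delta)$, so that $\mu_\alpha^{\min}\bigl(\Omega_X^1(\log\Delta)\bigr)<0$.

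\emph{Step 2: the maximal destabilizer is a foliation of positive slope.} Dualizing, the sheaf $T_X(-\log\Delta):=\Omega_X^1(\log\Delta)^\vee$ of vector fields tangent to $\Delta$ satisfies $\mu_\alpha^{\max}\bigl(T_X(-\log\Delta)\bigr)=-\mu_\alpha^{\min}\bigl(\Omega_X^1(\log\Delta)\bigr)>0$. Let $\mathcal{V}\subseteq T_X(-\log\Delta)$ be the maximal destabilizing subsheaf: it is saturated, $\alpha$-semistable, and $\mu_\alpha(\mathcal{V})>0$; moreover it is a \emph{proper} subsheaf, since $\mu_\alpha\bigl(T_X(-\log\Delta)\bigr)\le0$ by pseudoeffectivity of $K_X+\Delta$. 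I claim $\mathcal{V}$ is a foliation: the $\mathcal{O}_X$-linear map $\mathcal{V}\otimes\mathcal{V}\to T_X(-\log\Delta)/\mathcal{V}$ induced by the Lie bracket (logarithmic vector fields are bracket-closed) must vanish, because its source has minimal slope $2\mu_\alpha(\mathcal{V})$ while its target has maximal slope strictly below $\mu_\alpha(\mathcal{V})<2\mu_\alpha(\mathcal{V})$; hence $[\mathcal{V},\mathcal{V}]\subseteq\mathcal{V}$. Note also that the inclusion $\mathcal{V}\subseteq T_X(-\log\Delta)$ says precisely that $\Delta$ is invariant under the foliation $\mathcal{V}$.

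\emph{Step 3: algebraicity and the final contradiction.} Since $\mathcal{V}$ is a saturated foliation with $\mu_\alpha^{\min}(\mathcal{V})=\mu_\alpha(\mathcal{V})>0$, the Bogomolov--McQuillan algebraicity theorem, in the logarithmic and movable-class version — which is the technical core of \cite{CP11,CP15} — shows that $\mathcal{V}$ is algebraically integrable with rationally connected general leaf. Let $\pi\colon X\dashrightarrow Y$ be the resulting fibration, with $1\le\dim Y<\dim X$ and general fibre $F$ rationally connected. Because $\Delta$ is $\mathcal{V}$-invariant while on a general fibre $\mathcal{V}$ restricts to the trivial foliation whose only leaf is $F$ itself, no component of $\Delta$ can dominate $Y$; hence $\Delta\cap F=\emptyset$ for general $F$. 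Then $F$, and therefore $X$, is covered by a family of free rational curves $C$ (free in $X$ because the fibres $F$ move) with $C\cap\Delta=\emptyset$, so $(K_X+\Delta)\cdot C=K_X\cdot C<0$ while the curves $C$ sweep out $X$ — contradicting the pseudoeffectivity of $K_X+\Delta$.

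\emph{On the main obstacle.} Steps 1 and 2 are formal once the Harder--Narasimhan machinery with respect to movable classes and the characteristic-zero semistability of tensor products are in hand; the only delicate points there are the bookkeeping with the logarithmic poles and the basic properties of movable slope stability. The genuinely hard input is Step 3, namely the algebraicity of foliations of positive slope together with its extension to the logarithmic and movable-class framework.
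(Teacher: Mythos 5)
The paper does not prove this statement; it is quoted from \cite{CP11,CP15}, and your outline is essentially a faithful reconstruction of the Campana--P\u{a}un argument given there: BDPP duality to produce a movable class $\alpha$ with $\mu_\alpha(\mathcal Q)<0$, tensor-product semistability to reduce to $m=1$, the maximal destabilizing subsheaf of $T_X(-\log\Delta)$ as a $\Delta$-invariant foliation of positive slope via the Lie-bracket/slope argument, and the algebraicity--rational-connectedness theorem to manufacture a covering family of rational curves $C$ with $(K_X+\Delta)\cdot C<0$. You also correctly locate the hard core in Step 3. Two caveats on points your sketch compresses: (a) tensor-product semistability for \emph{movable} classes is not a direct consequence of Mehta--Ramanathan, since $\alpha$ is only a limit of classes $\mu_*(\tilde A_1\cdots\tilde A_{n-1})$ and pullback to the modification does not preserve semistability on the nose; one must first perturb $\alpha$ to a class of that exact form (possible because the negativity condition is open) and then deal with saturation on the modification, as in \cite{CP15} and Greb--Kebekus--Peternell. (b) The disjointness of the general leaf closure from $\Delta$ is more delicate than ``no component of $\Delta$ dominates $Y$,'' because the foliation is singular and $\pi$ is only a rational map, so the leaf \emph{closure} can meet $\Delta$ inside the indeterminacy or singular locus; the reference handles this with the log-leaf formalism, proving directly that the minimal free rational curves tangent to $\mathcal V$ satisfy $(K_X+\Delta)\cdot C<0$ rather than arguing that $F\cap\Delta=\emptyset$. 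Neither caveat changes the architecture, which is the right one.
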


This result generalises Miyaoka's generic semipositivity theorem \cite{Miy87a}. It is used to show that the assumptions of Theorem \ref{thm:nonvanishing} imply the existence of a pseudoeffective divisor $F$ and of Weil divisors $N_m\geq0$ for infinitely many $m$ with
$$N_m\sim mK_X-F.$$

The second input is running a Minimal Model Program with scaling for a carefully chosen pair $(X,\varepsilon N_k)$, to show the existence of a birational contraction $X\dashrightarrow X'$ and a fibration $f\colon X'\to Y$ to a lower-dimensional variety $Y$ and of a big $\Q$-divisor $D$ on $Y$ such that $K_{X'}\sim_\Q f^*D$. This produces the desired result. Similar techniques work in the context of nef line bundles on varieties of Calabi-Yau type, and we give more details in the following subsections.

\medskip

In order to apply Theorem \ref{thm:nonvanishing},  we need to consider singular metrics on $\sO_X(L)$. First we recall the definitions, see e.g.\ \cite{DPS01,Dem01}.

\begin{dfn} 
Let $X$ be a normal projective variety and let $D$ be a $\Q$-Cartier divisor on $X$. Then $D$, or $\OO_X(D)$, has a metric with \emph{analytic singularities} and semipositive curvature current, if there exists a positive integer $m$ such that $mD$ is Cartier and if there exists a resolution of singularities $\pi\colon Y \to X$ such that the line bundle $\pi^*\sO_X(mD)$ has a singular metric $h$ whose curvature current is semipositive and such that the local plurisubharmonic weights $\varphi$ of $h$ are of the form
$$ \varphi = \sum \lambda_j \log \vert g_j \vert + O(1),$$
where $\lambda_j$ are positive real numbers, $O(1)$ is a bounded term, and the divisors $D_j$ defined locally by $g_j$ form a simple normal crossing divisor on $Y$. We then have
$$\textstyle\mathcal I(h^{\otimes m})=\sO_Y\big(-\sum\lfloor m\lambda_j\rfloor D_j\big)\quad\text{for every positive integer }m,$$
where $\mathcal I(h^{\otimes m})$ is the multiplier ideal associated to $h^{\otimes m}$. If all $\lambda_j$ are rational, then $h$ has \emph{algebraic singularities}. Further, $\mathcal O_X(D)$  is \emph{hermitian semipositive} if $\pi^*\sO_X(mD)$ has a smooth hermitian metric $h$ whose curvature $\Theta_h(D)$ is semipositive. 
\end{dfn}

The following result is the Hard Lefschetz Theorem from \cite{DPS01}, and it is crucial for applications of Theorem \ref{thm:nonvanishing}.

\begin{thm}\label{thm:DPS}
Let $X$ be a compact K\"ahler manifold of dimension $n$ with a K\"ahler form $\omega$. Let $\mathcal L$ be a pseudoeffective line bundle on $X$ with a singular hermitian metric $h$ such that $\Theta_h(\mathcal L) \geq 0$. Then for every nonnegative integer $q$ the morphism
\[
\xymatrix{ 
H^0\big(X,\Omega^{n-q}_X\otimes\mathcal  L\otimes\mathcal I(h)\big) \ar[r]^{\omega^q\wedge\bullet} & H^q\big(X, \Omega^n_X\otimes \mathcal L\otimes\mathcal I(h)\big)
}
\]
is surjective.
\end{thm}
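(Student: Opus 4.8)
The plan is to follow the harmonic-theoretic argument of \cite{DPS01}: represent cohomology classes by $L^2$-harmonic forms adapted to $h$, and exploit the Bochner--Kodaira--Nakano identity, which on $(n,q)$-forms has a curvature term of the correct sign whenever $\Theta_h(\mathcal L)\geq 0$.

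\textbf{Step 1: regularisation and the $L^2$ set-up.} First I would replace the singular metric $h$ by a regularising family. By Demailly's approximation theorem for quasi-plurisubharmonic weights, there is a family of singular metrics $h_\varepsilon$ on $\mathcal L$ with analytic singularities such that $\Theta_{h_\varepsilon}(\mathcal L)\geq-\varepsilon\omega$, the weights of $h_\varepsilon$ decrease to the weight of $h$, and the multiplier ideals $\mathcal I(h_\varepsilon)$ converge to $\mathcal I(h)$. After a log resolution one may moreover assume the polar set of $h_\varepsilon$ is a simple normal crossing divisor $Z_\varepsilon$; on $X_\varepsilon:=X\setminus Z_\varepsilon$ the metric on $\mathcal L$ is then smooth with curvature $\geq-\varepsilon\omega$, and $X_\varepsilon$ carries a complete Kähler metric. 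Standard $L^2$-Hodge theory for $\mathcal L$-valued $(n,\bullet)$-forms on the complete Kähler manifold $X_\varepsilon$, with the weight coming from $h_\varepsilon$, computes $H^q\big(X,\Omega_X^n\otimes\mathcal L\otimes\mathcal I(h_\varepsilon)\big)$, so each class there is represented by an $L^2$-harmonic $\mathcal L$-valued $(n,q)$-form $u$.

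\textbf{Step 2: Bochner, and construction of the holomorphic primitive.} For such a harmonic $u$ one has $\bar\partial u=\bar\partial^{*}u=0$, and the Bochner--Kodaira--Nakano identity $\Delta''=\Delta'+[i\Theta(\mathcal L),\Lambda]$ gives
$$0=\|D'u\|^2+\|D'^{*}u\|^2+\big\langle[i\Theta(\mathcal L),\Lambda]u,u\big\rangle.$$
On $(n,q)$-forms the operator $[i\Theta(\mathcal L),\Lambda]$ is $\geq-\varepsilon q$ (since $i\Theta(\mathcal L)\geq-\varepsilon\omega$), so $\|D'u\|^2+\|D'^{*}u\|^2\leq\varepsilon q\|u\|^2$, which forces $D'u=D'^{*}u=0$ in the limit $\varepsilon\to0$. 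Because pointwise $\omega^q\wedge\colon\Lambda^{n-q,0}T_X^{*}\otimes\mathcal L\to\Lambda^{n,q}T_X^{*}\otimes\mathcal L$ is an isomorphism with inverse bounded in terms of $n$, $q$ and $\omega$ (hard Lefschetz on the exterior algebra), write $u=\omega^q\wedge v$ for a smooth $\mathcal L$-valued $(n-q,0)$-form $v$; concretely $v$ is a constant multiple of $\Lambda^q u$. Using the Kähler identities $[\Lambda,\bar\partial]=-iD'^{*}$ and $[\Lambda,D'^{*}]=0$ together with $\bar\partial u=0$ and $D'^{*}u=0$ one computes $\bar\partial v=\mathrm{const}\cdot\Lambda^q\bar\partial u=0$, so $v$ is holomorphic; and the pointwise comparison $|v|\asymp|u|$ shows $v$ is $L^2$ for the weight of $h_\varepsilon$, hence extends across $Z_\varepsilon$ to a section of $\Omega_X^{n-q}\otimes\mathcal L\otimes\mathcal I(h_\varepsilon)$.

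\textbf{Step 3: conclusion, and the main difficulty.} Since $\omega^q\wedge v=u$ on the nose, the class $[u]$ lies in the image of $\omega^q\wedge\bullet\colon H^0\big(X,\Omega_X^{n-q}\otimes\mathcal L\otimes\mathcal I(h_\varepsilon)\big)\to H^q\big(X,\Omega_X^n\otimes\mathcal L\otimes\mathcal I(h_\varepsilon)\big)$; thus the map is surjective at level $h_\varepsilon$. Letting $\varepsilon\to0$ and invoking the convergence $\mathcal I(h_\varepsilon)\to\mathcal I(h)$ together with finite-dimensionality of the cohomology groups upgrades this to surjectivity onto $H^q\big(X,\Omega_X^n\otimes\mathcal L\otimes\mathcal I(h)\big)$. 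The algebra of Step 2 is mechanical once $u$ is known to be fully harmonic; the real obstacle is Step 1 — making $L^2$-Hodge theory legitimate in the presence of the singular metric and, above all, controlling the multiplier ideals along the regularisation so that surjectivity survives the passage to the limit. This is precisely where Demailly's approximation machinery and the choice of complete Kähler metrics on the open parts $X_\varepsilon$ are indispensable; if $h$ is assumed to have analytic singularities from the outset, then on the simple normal crossing complement the curvature is honestly $\geq0$, no $\varepsilon$-loss occurs, and the argument is considerably shorter.
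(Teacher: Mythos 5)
This statement is quoted in the survey from \cite{DPS01} without proof, so there is no in-paper argument to compare against; what you have written is a reconstruction of the original Demailly--Peternell--Schneider proof, and you have the right skeleton: regularise $h$, use the pointwise Lefschetz isomorphism $\omega^q\wedge\colon\Lambda^{n-q,0}T_X^*\otimes\mathcal L\to\Lambda^{n,q}T_X^*\otimes\mathcal L$ to produce a candidate primitive $v=\mathrm{const}\cdot\Lambda^q u$ of a harmonic representative $u$, and control $\bar\partial v$ via Bochner--Kodaira--Nakano and the K\"ahler identity $[\Lambda,\bar\partial]=-iD'^*$.

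Two points need repair, and the second is a genuine gap. First, the approximation actually used in \cite{DPS01} is the \emph{equisingular} one, with $\mathcal I(h_\varepsilon)=\mathcal I(h)$ exactly for all $\varepsilon$ (their Theorem~2.3); these $h_\varepsilon$ are smooth outside the Lelong upper-level sets but do \emph{not} have analytic singularities, and conversely the analytic-singularity approximation you invoke does not preserve the multiplier ideal (equality $\mathcal I(h_\varepsilon)=\mathcal I(h)$ for small $\varepsilon$ would require the strong openness theorem, unavailable in 2001 and not what is used). Equisingularity is what keeps the target group $H^q(X,\Omega^n_X\otimes\mathcal L\otimes\mathcal I(h_\varepsilon))$ literally equal to $H^q(X,\Omega^n_X\otimes\mathcal L\otimes\mathcal I(h))$ at every stage. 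Second, and more seriously: at a \emph{fixed} $\varepsilon$ the Bochner identity only gives $\|D'^*u_\varepsilon\|^2\le q\varepsilon\|u_\varepsilon\|^2$, not $D'^*u_\varepsilon=0$, so $\bar\partial v_\varepsilon$ is merely $O(\sqrt\varepsilon)$ and $v_\varepsilon$ is \emph{not} holomorphic; your claim that ``the map is surjective at level $h_\varepsilon$'' is therefore unjustified, and the subsequent plan to ``let $\varepsilon\to0$ and upgrade surjectivity'' has nothing to pass to the limit (surjectivity of a family of maps is not a property that survives limits of sheaves in any obvious way). The correct conclusion of the argument is different: one uses the uniform bounds $\|v_\varepsilon\|_{h_\varepsilon}\le C\|u_\varepsilon\|_{h_\varepsilon}\le C\|u\|$ (minimality of the harmonic representative) to extract a weak limit $v$ of the \emph{approximate primitives} $v_\varepsilon$; the limit satisfies $\bar\partial v=0$, lies in $H^0(X,\Omega^{n-q}_X\otimes\mathcal L\otimes\mathcal I(h))$ because the weights decrease monotonically to that of $h$, and $\omega^q\wedge v$ is cohomologous to $u$. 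You should restructure Steps 2--3 so that the limit is taken on the sections $v_\varepsilon$ rather than on the surjectivity statement.
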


As an immediate application of Theorem \ref{thm:nonvanishing}, following \cite{DPS01} we show the following:

\begin{thm}\label{thm:B}
Let $X$ be a terminal projective variety of dimension $n$ with $K_X$ pseudoeffective and $\chi(X,\OO_X)\neq0$.
\begin{enumerate}
\item[(i)] Assume the existence of good models for klt pairs in dimensions at most $n-1$. If $K_X$ has a singular metric with algebraic singularities and semipositive curvature current, then $\kappa(X,K_X) \geq0$. Moreover, if $K_X$ is hermitian semipositive, then $K_X$ is semiample. 
\item[(ii)] Assume that $K_X$ is nef and $\nu(X,K_X)=1$. Then $\kappa(X,K_X) \geq0$. 
\end{enumerate} 
\end{thm}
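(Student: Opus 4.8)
The plan is to reduce Theorem~\ref{thm:B} to Theorem~\ref{thm:nonvanishing} by producing, under each of the hypotheses, a positive integer $q$ and infinitely many Cartier multiples $mK_X$ with $h^0\big(X,\Omega^{[q]}_X\otimes\OO_X(mK_X)\big)>0$. The engine for this is the Hard Lefschetz Theorem~\ref{thm:DPS} combined with the assumption $\chi(X,\OO_X)\neq0$. First I would pass to a resolution $\pi\colon Y\to X$ on which $\pi^*\OO_X(mK_X)$ carries the relevant singular metric $h$; since $X$ is terminal, $\pi_*\Omega^n_Y=\Omega^{[n]}_X$ and $R^i\pi_*\Omega^n_Y=0$ for $i>0$ (Grauert--Riemenschneider), so cohomology of $\Omega^n_Y\otimes\pi^*\OO_X(mK_X)$ upstairs computes cohomology of $\omega_X\otimes\OO_X(mK_X)\cong\OO_X\big((m+1)K_X\big)$ downstairs; similarly the reflexive pullback of $\Omega^{[q]}_X$ is comparable to $\Omega^q_Y$ up to terms supported on the exceptional locus, which one controls using that $\mult$ along exceptional divisors is at most the discrepancy. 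The point of this step is purely to be able to work with an honest compact K\"ahler (indeed projective) manifold $Y$ and honest sheaves of differentials.

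Next, in part (i), the hypothesis of algebraic singularities lets me choose $h$ so that $\mathcal I(h^{\otimes m})$ is a fixed ideal $\mathcal I(h)$ for $m$ ranging over a suitable arithmetic progression of Cartier multiples; then Theorem~\ref{thm:DPS} gives, for each nonnegative $q$, a surjection
\[
H^0\big(Y,\Omega^{n-q}_Y\otimes\pi^*\OO_X(mK_X)\otimes\mathcal I(h)\big)\twoheadrightarrow H^q\big(Y,\Omega^n_Y\otimes\pi^*\OO_X(mK_X)\otimes\mathcal I(h)\big).
\]
Because $\chi(X,\OO_X)\neq0$, some Hodge summand $H^q(X,\Omega^{[n]}_X)\cong H^q(X,\OO_X)^{\vee}$-type group is nonzero, and tensoring by the (semipositively curved, hence ``numerically trivial-ish'') metrized line bundle does not kill all of the Euler characteristic: a standard argument shows $\sum_q(-1)^q h^q\big(Y,\Omega^n_Y\otimes\pi^*\OO_X(mK_X)\otimes\mathcal I(h)\big)$ is, up to a bounded correction independent of $m$, equal to $\chi(X,\OO_X)\neq 0$ for all such $m$, so for each such $m$ there is some $q=q(m)$ with the right-hand group nonzero, and by pigeonhole a single $q$ works for infinitely many $m$. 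Surjectivity of Hard Lefschetz then forces $H^0\big(Y,\Omega^{n-q}_Y\otimes\pi^*\OO_X(mK_X)\otimes\mathcal I(h)\big)\neq0$, and pushing forward and reflexifying yields $h^0\big(X,\Omega^{[n-q]}_X\otimes\OO_X(mK_X)\big)>0$ (the multiplier ideal and the exceptional corrections only shrink the space of sections by a bounded amount, so nonvanishing survives). Setting the integer in Theorem~\ref{thm:nonvanishing} to be $n-q$ and invoking hypothesis~(i) of that theorem gives $\kappa(X,K_X)\geq 0$. For the ``moreover'' clause, if $K_X$ is hermitian semipositive we may take $h$ smooth, so $\mathcal I(h)=\OO$; having now $\kappa(X,K_X)\geq0$ together with $K_X$ nef and semipositively curved, semiampleness follows from the abundance-from-nonvanishing machinery of \cite{LP16} (equivalently: a nef divisor with a smooth semipositive metric and nonnegative Iitaka dimension on a variety admitting good models in lower dimension is semiample).

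For part (ii), $K_X$ is nef with $\nu(X,K_X)=1$, so I drop the good-model hypothesis and instead feed hypothesis~(ii) of Theorem~\ref{thm:nonvanishing}. Nefness already supplies a smooth semipositive metric is not automatic, but a nef line bundle on a projective manifold has, after pulling back to $Y$, metrics with arbitrarily small negative part, and in the $\nu=1$ case one gets an honest singular metric with $\Theta_h\geq0$ and $\mathcal I(h)=\OO$ (nef plus numerical dimension one is enough to run the DPS argument with trivial multiplier ideal, since the non-nef locus is empty). Then the same Euler-characteristic-plus-Hard-Lefschetz argument as above, now with $\mathcal I(h)=\OO$ throughout, produces the required $q$ and infinitely many $m$ with $h^0\big(X,\Omega^{[q]}_X\otimes\OO_X(mK_X)\big)>0$, and Theorem~\ref{thm:nonvanishing}(ii) gives $\kappa(X,K_X)\geq0$.

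The main obstacle I anticipate is the bookkeeping on the resolution: relating $H^\bullet\big(Y,\Omega^q_Y\otimes\pi^*\OO_X(mK_X)\otimes\mathcal I(h)\big)$ to $H^\bullet\big(X,\Omega^{[q]}_X\otimes\OO_X(mK_X)\big)$ uniformly in $m$, controlling the exceptional and multiplier-ideal corrections so that they contribute only an $m$-independent bounded error to the Euler characteristic, and making sure the $\chi(X,\OO_X)\neq0$ input really survives tensoring by the metrized bundle. All of this is routine in spirit — it is exactly the kind of estimate underlying the original DPS argument — but it is where the care must go; the rest is a clean reduction to Theorem~\ref{thm:nonvanishing}.
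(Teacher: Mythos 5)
Your overall strategy --- feeding Theorem \ref{thm:nonvanishing} by producing forms via the Hard Lefschetz Theorem \ref{thm:DPS} together with the hypothesis $\chi(X,\OO_X)\neq0$ --- is exactly the paper's, but two of your key steps fail as stated. First, the assertion that $\sum_q({-}1)^qh^q\big(Y,\Omega^n_Y\otimes\pi^*\OO_X(mK_X)\otimes\mathcal I(h)\big)$ equals $\chi(X,\OO_X)$ ``up to a bounded correction independent of $m$'' is false: $\chi\big(X,\Omega^n_X\otimes\OO_X(mK_X)\big)=\chi\big(X,\OO_X((m{+}1)K_X)\big)$ is a Riemann--Roch polynomial in $m$ of degree up to $n$, not a bounded perturbation of a constant. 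The paper avoids ever having to evaluate this polynomial at a positive $m$ by arguing by contradiction: if $\kappa(X,K_X)=-\infty$, the contrapositive of Theorem \ref{thm:nonvanishing} kills all the $h^0\big(X,\Omega^q_X\otimes\OO_X(mK_X)\big)$ for almost all $m$, Hard Lefschetz then kills all the $h^q\big(X,\Omega^n_X\otimes\OO_X(mK_X)\big)$, so the polynomial $\chi\big(X,\OO_X((m{+}1)K_X)\big)$ vanishes for almost all $m$, hence identically, hence at $m=-1$, contradicting $\chi(X,\OO_X)\neq0$. Your forward version can be repaired in the hermitian semipositive case by exactly this ``polynomial nonzero at $m=-1$'' observation, but not by the bounded-correction claim.

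Second, your handling of the multiplier ideals does not work. In the algebraic-singularities case one has $\mathcal I(h^{\otimes m})=\OO_Y\big({-}\sum\lfloor m\lambda_j\rfloor D_j\big)$, which grows linearly with $m$; it is not a fixed ideal along an arithmetic progression, and the Euler characteristic with this growing negative twist cannot be read off from $\chi(X,\OO_X)$ by evaluation at $m=-1$. (The contradiction argument sidesteps this as well: vanishing of $h^0$ without the ideal gives vanishing with the ideal a fortiori, which is all that Hard Lefschetz needs.) More seriously, in part (ii) you assert that nefness plus $\nu(X,K_X)=1$ yields a semipositive singular metric with trivial multiplier ideal ``since the non-nef locus is empty''; this is false --- a nef line bundle with $\nu=1$ need not admit any metric, smooth or singular, with semipositive curvature current and trivial multiplier ideal (the standard example being $\OO(1)$ on the nonsplit ruled surface over an elliptic curve), and the intended route in the $\nu=1$ case is to control the cosupport of $\mathcal I(h^{\otimes m})$, showing it has codimension at least two via the Hodge index theorem, cf.\ the argument sketched for Corollary \ref{cor:HLF}, not to assume the ideal is trivial. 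Finally, for the ``moreover'' clause the paper invokes the Gongyo--Matsumura result \cite{GM14} to pass from $\kappa(X,K_X)\geq0$ and hermitian semipositivity to semiampleness; your appeal to an unspecified ``abundance-from-nonvanishing machinery'' is not a substitute.
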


We note that if $K_X$ is hermitian semipositive in part (i), and if $\kappa(X,K_X)\geq0$, then $K_X$ is semiample by results of Gongyo and Matsumura \cite{GM14}, hence the main issue to prove here is nonvanishing.

We sketch the proof when $X$ is smooth and $K_X$ is hermitian semipositive. Hence, assume that $\kappa(X,K_X)=-\infty$. Now, Theorem \ref{thm:nonvanishing} implies that
$$h^0\big(X,\Omega^q_X \otimes \OO_X(mK_X)\big)=0$$
for all $q$ and almost all $m>0$, and then Theorem \ref{thm:DPS} yields 
$$h^q\big(X,\Omega^n_X \otimes \OO_X(mK_X)\big)=0$$
for all $q$ and almost all $m>0$, and a fortiori,
$$\chi\big(X,\Omega^n_X \otimes \OO_X(mK_X)\big)=0.$$
In particular, since $\chi\big(X,\Omega^n_X \otimes \OO_X(mK_X)\big)$ is a numerical polynomial in $m$, this last relation holds for every $m$, and in particular for $m={-}1$. This contradicts the assumption $\chi(X,\mathcal O_X) \ne 0$. 

\begin{rem}
When $n=3$, Theorem \ref{thm:B} gives a new proof of (the most difficult part of) nonvanishing in dimension $3$, which avoids Donaldson's theory.
\end{rem}

\subsection{Abundance Conjecture on Ricci flat manifolds}  

One might speculate that given a smooth projective variety $X$ and a nef divisor $L$ on $X$ such that $K_X + L$ is nef, then $K_X + L$ is numerically equivalent to a semiample divisor. This is however not true: indeed, consider $X$ to be $\mathbb P^2$ blown up in $9$ points in general position and set $L = - 2K_X$. Then $K_X+L = -K_X$ will not be semiample. 

However, it is expected that things change when $K_X$ is numerically trivial. The following is the Abundance Conjecture for manifolds with trivial canonical class.

\begin{con} \label{conj:Ab1} 
Let $X$ be a projective manifold with $H^1(X,\mathcal O_X) = 0$  such that $K_X \sim 0$. If $L$ is a nef divisor on $X$, then $L$ is semiample. 
\end{con}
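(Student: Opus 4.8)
The plan is to reduce Conjecture \ref{conj:Ab1} to known and conjectural abundance-type statements by carefully decomposing the numerical dimension $\nu(X,L)$. First I would observe that, by the Beauville-Bogomolov decomposition (Theorem \ref{thm:Beauville}) and the hypothesis $H^1(X,\mathcal O_X)=0$, a suitable finite étale cover $\widetilde X$ of $X$ is a product of Calabi-Yau and hyperkähler factors (no torus factor can appear, since a torus factor would contribute to $H^1$). Semiampleness can be tested after pulling back to $\widetilde X$ (a line bundle on $X$ is semiample if and only if its pullback to a finite étale cover is, by taking norms/invariants under the Galois group), so one may assume $X=\widetilde X$ is such a product; and then $L$ being nef means its Künneth components are nef on each factor, reducing the problem to the case where $X$ is a single Calabi-Yau or hyperkähler manifold. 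Moreover $K_X\sim 0$ here rather than merely numerically trivial, so $L=K_X+L$ up to linear equivalence, which is the form in which the conjecture is interesting.

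Next I would stratify by $\nu=\nu(X,L)$. The extreme cases are the easiest. If $\nu(X,L)=0$, then $L\equiv 0$, hence $L\sim_{\mathbb Q}0$ by Nakayama's work on numerical dimension zero divisors (quoted in the excerpt as abundance for $\nu=0$), and $L$ is even trivial when $\mathrm{Pic}$ is torsion-free; in particular $L$ is semiample. If $\nu(X,L)=\dim X$, then $L$ is big and nef, so $L$ is semiample by the Base Point Free Theorem. The genuinely hard range is $0<\nu(X,L)<\dim X$. Here the natural strategy is: (1) first establish \emph{nonvanishing}, i.e.\ $\kappa(X,L)\geq 0$; (2) then upgrade $\kappa(X,L)\geq 0$ to semiampleness. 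For step (1) the idea is to feed $L$ into the machinery of Theorem \ref{thm:nonvanishing} and Theorem \ref{thm:B} --- more precisely, their analogues for $K_X+L$ in place of $K_X$, where the reflexive differentials $\Omega_X^{[q]}$ get replaced by $\Omega_X^{[q]}\otimes\mathcal O_X(L)$ twisted appropriately. Because $K_X\sim 0$, the twisted cotangent sheaf $\Omega_X^1\otimes\mathcal O_X(L^{\otimes 1/\nu})$-type objects inherit positivity from $L$ via the Campana-Peternell positivity theorem (Theorem \ref{thm:CP11}), so one produces effective divisors $N_m\sim mL-F$ for a pseudoeffective $F$ and infinitely many $m$; combined with a Hard Lefschetz argument à la Theorem \ref{thm:DPS} and the Euler characteristic input (for Calabi-Yau $n$-folds one has $\chi(X,\mathcal O_X)\neq 0$ unless $n$ is odd, so one must also handle the odd-dimensional and hyperkähler cases by another device, e.g.\ via orbifold/covering tricks or via the existence of a holomorphic symplectic form).

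For step (2), once $\kappa(X,L)\geq 0$ is known, I would choose an effective divisor $D\in|mL|$ and run a $(K_X+\varepsilon D)$-MMP with scaling, exactly as in the nonvanishing argument sketched in the excerpt: since $K_X\sim 0$, this is an $L$-MMP, $L$ is nef so the program terminates on $X$ itself with $L$ nef and with an effective model, and the output is a fibration $f\colon X\to Y$ onto a lower-dimensional base together with a big $\mathbb Q$-divisor $A$ on $Y$ with $L\sim_{\mathbb Q}f^*A$. Then, by Ambro's canonical bundle formula (\cite{Amb05a}) one has $K_X+0\sim_{\mathbb Q}f^*(K_Y+\Gamma)$ for a klt pair $(Y,\Gamma)$; but $K_X\sim 0$ forces $K_Y+\Gamma\sim_{\mathbb Q}0$, and since $\dim Y<\dim X$ one may invoke abundance in lower dimension to conclude that $A$ (or rather $K_Y+\Gamma+$ a small ample, then $A$ itself by bigness and Base Point Free) is semiample on $Y$, whence $L=f^*A$ is semiample on $X$. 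The main obstacle is clearly step (1): the nonvanishing $\kappa(X,L)\geq 0$ for a nef $L$ with $0<\nu(X,L)<\dim X$ is genuinely open, and the Euler-characteristic trick of Theorem \ref{thm:B} does not apply uniformly --- it breaks precisely for odd-dimensional Calabi-Yau manifolds and needs substantial modification for hyperkähler manifolds, where instead one would want to exploit the Beauville-Bogomolov-Fujiki quadratic form and the fact that $\nu(X,L)$ is forced to be $\tfrac12\dim X$ when $q(L,L)=0$. So the realistic scope of a theorem here is conditional: assuming abundance for klt pairs in dimension $<\dim X$ and assuming nonvanishing (or restricting to $\nu(X,L)\in\{0,\dim X\}$, or to the hyperkähler case where Torelli-type input from Theorem \ref{thm:TorelliHK} and Remark \ref{rem:TorelliHK} can be brought to bear), the argument above goes through.
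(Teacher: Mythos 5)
Conjecture \ref{conj:Ab1} is exactly that --- a conjecture. The paper offers no proof of it; it only surveys partial results (Proposition \ref{pro:semiample}, Proposition \ref{prop:c2}, Theorems \ref{ample}, \ref{thm:sa}, \ref{thm:CYn-1}--\ref{cor:nef}, and the hyperk\"ahler results), all of which are conditional on extra hypotheses such as $\rho(X)=2$, $c_3(X)\neq 0$, $L\cdot c_2(X)\neq 0$, hermitian semipositivity of $L$, or the existence of good models in lower dimensions. Your proposal, as you yourself concede in the last sentence, does not close the gap either: the nonvanishing $\kappa(X,L)\geq 0$ for $0<\nu(X,L)<\dim X$ is the open heart of the problem, and no amount of rearranging the known machinery (Theorem \ref{thm:CP11}, Theorem \ref{thm:DPS}, the Euler characteristic trick of Theorem \ref{thm:B}) produces it unconditionally. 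So what you have written is a reasonable strategic survey, consistent in spirit with Subsections 4.3--4.5 of the paper, but it is not a proof and should not be presented as one.

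Two concrete errors in the reductions deserve mention. First, the claim that ``no torus factor can appear'' in the Beauville--Bogomolov cover because $H^1(X,\mathcal O_X)=0$ is false: for a finite \'etale Galois cover $\pi\colon\widetilde X\to X$ with group $G$ one only has $H^1(X,\mathcal O_X)=H^1(\widetilde X,\mathcal O_{\widetilde X})^G$, and the invariants can vanish while $\widetilde X$ has an abelian factor (free quotients of abelian threefolds with $b_1=0$ exist). The actual role of $H^1(X,\mathcal O_X)=0$ is to kill $\Pic^0(X)$, so that numerical and $\Q$-linear equivalence agree up to torsion; it does not eliminate torus factors upstairs, and your K\"unneth reduction must therefore still confront them (where $\Pic$ of the product is not simply the product of the $\Pic$'s). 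Second, your step (2) is circular as written: once $D\in|mL|$ is effective, $K_X+\varepsilon D\sim_\Q \varepsilon mL$ is already nef, so the MMP with scaling does nothing, and producing the fibration $f\colon X\to Y$ with $L\sim_\Q f^*A$ \emph{is} abundance for the klt pair $(X,\varepsilon D)$ in dimension $n$ --- not an application of abundance in dimension $<n$. This is precisely why Proposition \ref{pro:semiample} works for threefolds (where \cite{KMM94} is available) and why the higher-dimensional statements in the paper must explicitly assume the existence of good models.
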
 

The reason for assuming $H^1(X, \mathcal O_X) = 0$  is of course to exclude the presence of tori -- otherwise, the conjecture needs to be reformulated to claim that $L$ is numerically equivalent to a divisor $L'$ such that $L'$ is semiample. Note that when $X$ is an abelian variety, the modified conjecture has classically an affirmative answer, see e.g.\ \cite{BL04}. 

\begin{rem} 
Conjecture \ref{conj:Ab1} has been stated by various authors; we refer to \cite{Ver10} for an account. When $X$ is a hyperk\"ahler manifold, Conjecture \ref{conj:Ab1} is  referred to as (a version of) Strominger-Yau-Zaslow (SYZ) conjecture. 
\end{rem} 

A singular version of the conjecture is:

\begin{con} 
Let $X$ be a normal projective klt variety with $H^1(X,\mathcal O_X) = 0$  such that $K_X \sim_\Q 0$. If $L$ is a nef divisor on $X$, then $L$ is semiample. 
\end{con}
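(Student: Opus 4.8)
The plan is to argue by induction on $\dim X$, stratifying by the numerical dimension $\nu:=\nu(L)$, with the essential case being $X$ of Calabi--Yau or hyperk\"ahler type and $0<\nu<\dim X$. First I would carry out the standard reductions. Passing to a $\Q$-factorial terminalization $\pi\colon Y\to X$ — which is crepant, so $K_Y\sim_\Q 0$, and has $R^i\pi_*\OO_Y=0$, hence $H^1(Y,\OO_Y)=0$ — semiampleness of $\pi^*L$ descends to $L$ because $\pi^*L$ is $\pi$-trivial, so I may assume $X$ terminal and $\Q$-factorial. Since semiampleness is insensitive to finite quasi-\'etale covers, the Beauville--Bogomolov decomposition \ref{thm:Beauville} motivates concentrating on the Calabi--Yau and hyperk\"ahler building blocks (an abelian factor contributing nothing new by \cite{BL04}). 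Now split by $\nu$: if $\nu=0$ then $L\equiv0$, and $H^1(X,\OO_X)=0$ forces $\Pic^\tau(X)$ to be finite, so a multiple of $L$ is trivial and $L$ is semiample; if $\nu=\dim X$ then $L$ is nef and big, and the base-point-free theorem applied to $K_X+L\equiv L$ on the klt pair $(X,0)$ gives semiampleness. So the real problem is the range $0<\nu<\dim X$.

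There I would follow \cite{LP16} and split into \emph{nonvanishing} ($\kappa(X,L)\ge0$) and \emph{semiampleness} ($\kappa(X,L)\ge0\Rightarrow L$ semiample). For nonvanishing: endow $\OO_X(L)$ with a singular hermitian metric $h$ of semipositive curvature current (available since $L$ is nef, hence pseudoeffective) and apply Hard Lefschetz \ref{thm:DPS} with $\mathcal L=\OO_X(mL)$ to get surjections
$$H^0\big(X,\Omega^{[n-q]}_X\otimes\OO_X(mL)\otimes\mathcal I(h^{m})\big)\twoheadrightarrow H^q\big(X,\Omega^{[n]}_X\otimes\OO_X(mL)\otimes\mathcal I(h^{m})\big).$$
Since $K_X\sim_\Q0$, the right-hand groups govern the numerical polynomial $m\mapsto\chi\big(X,\OO_X(mL)\otimes\mathcal I(h^{m})\big)$, and one argues that for a suitable $q$ this polynomial is not identically zero, producing $h^0(X,\Omega^{[q]}_X\otimes\OO_X(mL))>0$ for infinitely many $m$; an adjoint version of Theorem \ref{thm:nonvanishing} for $K_X+L$ in place of $mK_X$, together with the positivity of torsion-free quotients of $\Omega^1_X(\log)$ from Theorem \ref{thm:CP11}, then upgrades this to $N_m\sim mL-F$ with $N_m\ge0$ effective and $F$ pseudoeffective, i.e.\ $\kappa(X,L)\ge0$. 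For semiampleness, write $N\ge0$ with $N\sim_\Q mL$ and run a $(K_X+\varepsilon N)$-MMP with scaling for small $\varepsilon>0$; as $K_X\sim_\Q0$ this is an $N$-MMP, and — using termination in the relevant settings and good models in lower dimensions as inputs — it yields a birational contraction $X\dashrightarrow X'$ and a fibration $f\colon X'\to Z$ with $\dim Z=\nu<\dim X$ on which the strict transform of $L$ is pulled back from a big semiample class; semiampleness descends along $f$ and is preserved under the birational contraction, and the inductive hypothesis (abundance below dimension $\dim X$) closes the loop.

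The main obstacle is nonvanishing in the intermediate range. Controlling the multiplier ideals $\mathcal I(h^{m})$ is delicate: one would like a metric with asymptotically trivial multiplier ideal, but producing one is essentially equivalent to the conclusion sought, so the argument must draw its content from the behaviour of $m\mapsto\chi(X,\OO_X(mL))$ alone. This is exactly where the Euler-characteristic argument that works for $\nu=1$ and for $\chi(X,\OO_X)\ne0$ breaks down — for an odd-dimensional Calabi--Yau manifold $\chi(X,\OO_X)=0$, and twisting by $\OO_X(mL)$ with $0<\nu(L)<\dim X$ need not change this — so a genuinely new input seems necessary there, the natural candidate being unconditional abundance in all lower dimensions combined with a finer analysis of the MMP and of the nef classes on the boundary of $\Nef(X)$. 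Termination of the MMP with scaling and the unconditional existence of lower-dimensional good models are further, still open, difficulties, though ones already anticipated by the general conjectural framework.
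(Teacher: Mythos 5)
The statement you are asked to prove is a \emph{conjecture}: the paper states it as an open problem and proves it only in special cases (e.g.\ Proposition \ref{prop:c2} when $L\cdot c_2(X)\neq 0$ on a Calabi--Yau threefold, Theorem \ref{thm:sa} under the hypotheses $\rho(X)=2$, $\nu(X,L)=2$, $c_3(X)\neq 0$, and Corollary \ref{cor:nef} conditionally on lower-dimensional good models and $\chi(X,\OO_X)\neq0$). Your proposal is an accurate roadmap of the known strategy, and to your credit you flag the genuine obstructions yourself in the last paragraph; but as a proof it has an irreducible gap exactly where you locate it. Concretely: (1) the nonvanishing step in the range $0<\nu(X,L)<\dim X$ is not established. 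The Hard Lefschetz/Euler-characteristic mechanism of Theorem \ref{thm:DPS} and Theorem \ref{thm:nonvanishing} only produces a contradiction when some twisted Euler characteristic is forced to be nonzero; for a Calabi--Yau threefold $\chi(X,\OO_X)=0$ and $\chi(X,\OO_X(mL))=\frac{m}{12}L\cdot c_2(X)$, so the critical case $L\cdot c_2(X)=0$ yields no information and remains open even in dimension~$3$. Moreover, the metric $h$ you invoke is not known to have controlled (let alone algebraic) singularities, and the hypotheses of Theorem \ref{thm:nonvanishing} and Corollary \ref{cor:nef} (good models in all lower dimensions, or analytic positivity of $\OO_X(L)$) are themselves unproven. (2) The semiampleness step requires abundance-type input in lower dimensions that is again conjectural beyond dimension~$3$.

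Two further reductions in your first paragraph need repair even granting the above. The Beauville--Bogomolov decomposition quoted in Theorem \ref{thm:Beauville} is for smooth Ricci-flat manifolds; for klt varieties with $K_X\sim_\Q 0$ you would need the singular decomposition theorem, which the paper does not invoke. More importantly, passing to the quasi-\'etale cover can destroy the hypothesis $H^1(X,\OO_X)=0$ and introduce an abelian factor, on which a nef divisor is in general only \emph{numerically equivalent} to a semiample one (the paper makes precisely this caveat after Conjecture \ref{conj:Ab1}); descending genuine semiampleness back to $X$ from a statement that is only true up to numerical equivalence upstairs is a nontrivial issue you would have to address, for instance by using that $\Pic^0(X)$ is trivial to control the discrepancy. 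In short: the easy cases $\nu=0$ and $\nu=\dim X$ are handled correctly, but the heart of the conjecture is untouched, as the paper's own partial results make clear.
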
 

We start discussing Conjecture \ref{conj:Ab1}. When $X$ is a K3 surface, then Conjecture \ref{conj:Ab1} is a simple consequence of Riemann-Roch and the Hodge index theorem. Thus we turn to Calabi-Yau threefolds. 

\subsection{Abundance conjecture: Calabi-Yau threefolds} 

We start with the following observation from \cite{Og93}. 

\begin{pro}\label{pro:semiample}
Let $L$ be a nef divisor on a Calabi-Yau threefold $X$. If $\kappa(X,L) \geq 0$, then $L$ is semiample. 
\end{pro}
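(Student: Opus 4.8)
The plan is to analyze the problem according to the numerical dimension $\nu(L)$, which for a nef divisor on a threefold takes values in $\{0,1,2,3\}$, and to treat each case separately using the hypothesis $\kappa(X,L)\geq 0$ together with the Calabi-Yau condition $K_X\sim 0$. First I would dispose of the extreme cases. If $\nu(L)=0$, then since $\kappa(X,L)\geq 0$ and $L$ is nef with $L^3=0$, the divisor $L$ is numerically trivial; being effective (up to a multiple) and numerically trivial on a manifold with $H^1(X,\OO_X)=0$ forces $L\sim_\Q 0$, which is semiample. If $\nu(L)=3$, then $L$ is big and nef, and the Kawamata-Shokurov basepoint free theorem applied to $K_X+L\sim_\Q L$ (using that $(X,0)$ is klt, in fact terminal) shows that $L$ is semiample.

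\medskip

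The substantive cases are $\nu(L)=1$ and $\nu(L)=2$. In both, the point is to produce an actual fibration out of the Iitaka fibration or out of an effective member, and to use the structure of Calabi-Yau threefolds to constrain it. Since $\kappa(X,L)\geq 0$, choose $m$ with $|mL|\neq\emptyset$ and write $mL\sim D$ with $D\geq 0$. I would like to apply the basepoint free theorem to $K_X+L$, but $L$ is not big, so instead I would pass to the Iitaka fibration of $L$ (equivalently run an MMP / use the structure theory) and argue that $L$ is pulled back from an ample class on the base; the key is that the numerical dimension equals the Kodaira dimension in these situations. Concretely, when $\nu(L)=1$ one expects $\kappa(X,L)=1$ and $L\sim_\Q f^*A$ for $f\colon X\to B$ with $B$ a curve and $A$ ample, giving semiampleness immediately; when $\nu(L)=2$, one expects a fibration to a surface. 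The mechanism for upgrading $\kappa$ to match $\nu$ is the log abundance already available in low dimensions (abundance for klt threefold pairs, \cite{KMM94}) applied to suitable pairs $(X,\varepsilon D)$, together with the fact that $K_X\equiv 0$.

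\medskip

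The hard part will be the case $\nu(L)=2$: here $\kappa(X,L)\geq 0$ does not obviously give $\kappa(X,L)=2$, and even granting that, one must show the Iitaka fibration $f\colon X\to S$ is a genuine morphism with $L\equiv f^*H$ for $H$ ample on the normal surface $S$, which requires controlling the positivity of $L$ on the fibers and ruling out the appearance of extra base components. The natural route is to run a $(K_X+\varepsilon D)$-MMP with scaling, which by threefold abundance for klt pairs terminates with a good minimal model; since $K_X\equiv 0$ this MMP is $D$-negative, hence also $L$-trivial on the contracted/flipped curves, so $L$ descends, and on the minimal model one gets a fibration whose base carries an ample class pulling back to (a multiple of) $L$. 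One then has to check this fibration structure is compatible with the original $X$ — i.e. that the birational contraction $X\dashrightarrow X'$ is an isomorphism in codimension one, which holds because $X$ is a terminal minimal model so all steps are flops. Finally, pulling the base ample divisor back through the flop and using that flops are isomorphisms in codimension one yields that $L$ itself is semiample on $X$.

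\medskip

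An alternative and perhaps cleaner approach avoiding the MMP is via the numerical dimension: by Nakayama's theory \cite{Nak04}, a nef divisor $L$ on a terminal variety with $K_X\equiv 0$ satisfies $\kappa(X,L)=\nu(X,L)$ provided $\kappa(X,L)\geq 0$ — this is essentially because $K_X+L\equiv L$ and the "nonvanishing implies $\kappa=\nu$" principle holds for $K$-trivial manifolds — and once $\kappa=\nu$, semiampleness of a nef divisor with $\kappa=\nu$ on a threefold follows from log abundance for the pair $(X,L)$ in the generalized sense, or directly from the fact that $L$ then has a birational Zariski decomposition with the moving part semiample. I would present the argument through whichever of these is shortest in the paper's conventions, but in all formulations the crux is the passage from $\kappa(X,L)\geq 0$ to $\kappa(X,L)=\nu(X,L)$ in the middle range $\nu(L)\in\{1,2\}$, which is exactly where the special geometry of Calabi-Yau threefolds (triviality of $K_X$, vanishing of $H^1(\OO_X)$, and low-dimensional abundance) is indispensable.
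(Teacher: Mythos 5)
Your proposal contains the paper's entire proof as one subordinate clause, but you do not seem to realize that this clause is the whole argument. The paper's proof is three lines: pick $D\in|mL|$ using $\kappa(X,L)\geq 0$; for $0<\varepsilon\ll 1$ the pair $(X,\varepsilon D)$ is klt since $X$ is smooth; the divisor $K_X+\varepsilon D\sim_{\Q}\varepsilon m L$ is nef because $L$ is; hence it is semiample by log abundance for threefold klt pairs \cite{KMM94}, and triviality of $K_X$ transfers this to $L$. You cite exactly this — ``log abundance \ldots applied to suitable pairs $(X,\varepsilon D)$'' — but you mischaracterize it as merely ``the mechanism for upgrading $\kappa$ to match $\nu$,'' and consequently surround it with a case division by $\nu(L)$, a discussion of Iitaka fibrations, an MMP with scaling, and an argument that the birational contraction is a composition of flops. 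None of that is needed: the theorem of \cite{KMM94} is a \emph{semiampleness} statement (klt plus nef implies semiample), not a statement about Kodaira versus numerical dimension, so it finishes the proof in one application with no case analysis. The extra machinery you invoke is not wrong — the $\nu=3$ case via the basepoint free theorem and the $\nu=0$ case are fine, and the MMP route for $\nu\in\{1,2\}$ could be made to work — but it obscures the fact that the hypothesis $\kappa(X,L)\geq 0$ is used only once, to produce the boundary divisor $D$, after which everything is outsourced to the known threefold log abundance.
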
 

\begin{proof} 
Fix a positive integer $m$ and an effective divisor $D \in |mL|$. Then for a small positive rational number $\varepsilon$ the pair $(X,\varepsilon D)$ is klt, hence $K_X + \varepsilon D$ is semiample by the abundance conjecture for threefolds \cite{KMM94}. Since $K_X$ is trivial, the claim follows.
\end{proof} 

Thus the problem is reduced to showing that $\kappa(X,L) \geq  0$. The Riemann-Roch theorem gives
$$ \chi(X,\mathcal O_X(mL)) = \frac{m}{12} L \cdot c_2(X). $$ 
By a theorem of Miyaoka \cite{Miy87}, $ c_2(X) \in \NEb(X)$, and in particular 
$$L \cdot c_2(X) \geq 0.$$
Moreover $c_2(X) \ne 0$, since otherwise $X$ would be an \'etale quotient of a torus by Yau's theorem, see for instance \cite[IV.4.15]{Kob87}. 

Therefore, if $L \cdot c_2(X) \ne 0$, then $h^0(X, \mathcal O_X(mL))$ or $h^2(X, \mathcal O_X(mL))$ grows at least linearly with $m$. The latter case is ruled out if $\nu(X,L) = 2$ by the Kawamata-Viehweg vanishing \cite[Corollary]{Kaw82}. When $\nu(X,L) = 1$, additional arguments are needed \cite{Og93}; we discuss this also below, using a different approach. Thus, we have:
 
\begin{pro}  \label{prop:c2} 
Let $L$ be a nef divisor on a Calabi-Yau threefold $X$. If $L \cdot c_2(X) \ne 0$, then $L $ is semiample. 
\end{pro}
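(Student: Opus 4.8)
The plan is to reduce the statement to Proposition \ref{pro:semiample}, so that it suffices to prove $\kappa(X,L)\geq0$. Before splitting into cases I would record two elementary facts. First, by Miyaoka's theorem $c_2(X)\in\NEb(X)$, so the nef divisor $L$ satisfies $L\cdot c_2(X)\geq0$, and the hypothesis then upgrades this to $L\cdot c_2(X)>0$; in particular $L\not\equiv0$, hence $\nu(X,L)\in\{1,2,3\}$. Second, $h^0\big(X,\OO_X(-mL)\big)=0$ for every $m\geq1$: if some $D\sim-mL$ were effective and nonzero, then for an ample class $A$ we would get $0=(mL+D)\cdot A^2=mL\cdot A^2+D\cdot A^2$ with both summands nonnegative, forcing the contradiction $D\cdot A^2=0$ for a nonzero effective divisor. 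By Serre duality and $K_X\sim0$ this gives $h^3\big(X,\OO_X(mL)\big)=0$ for all $m\geq1$.

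Next I would dispose of the case $\nu(X,L)=3$ at once: then $L$ is nef and big, so $\kappa(X,L)=3$ and we are done. So assume $\nu(X,L)\leq2$, hence $L^3=0$, and Riemann--Roch (using $\chi(X,\OO_X)=0$ for a Calabi--Yau threefold) yields
\[
\chi\big(X,\OO_X(mL)\big)=\tfrac{m}{12}\,L\cdot c_2(X)>0\qquad(m\geq1).
\]
Combined with the vanishing of $h^3$ this gives $h^0\big(X,\OO_X(mL)\big)+h^2\big(X,\OO_X(mL)\big)\geq\tfrac{m}{12}L\cdot c_2(X)>0$ for all $m\geq1$, so the problem becomes to rule out the scenario where $h^0\big(X,\OO_X(mL)\big)=0$ for all $m$ while it is $h^2\big(X,\OO_X(mL)\big)$ that grows linearly.

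If $\nu(X,L)=2$ this is immediate: $mL$ is nef with $\nu(X,mL)=2=\dim X-1$, so Kawamata--Viehweg vanishing gives $h^2\big(X,\OO_X(K_X+mL)\big)=h^2\big(X,\OO_X(mL)\big)=0$ for all $m\geq1$, whence $h^0\big(X,\OO_X(mL)\big)\geq\tfrac{m}{12}L\cdot c_2(X)>0$ and $\kappa(X,L)\geq0$. Applying Proposition \ref{pro:semiample} concludes this case.

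I expect the genuine obstacle to be the case $\nu(X,L)=1$. Here $h^2\big(X,\OO_X(mL)\big)=h^1\big(X,\OO_X(-mL)\big)$ need not vanish, and one has to exclude, by a genuinely different argument, the possibility that this $h^1$ grows linearly even though no multiple of $L$ is effective. This is precisely the delicate point handled in \cite{Og93}; alternatively it can be attacked through the reflexive-differentials and MMP-with-scaling circle of ideas behind Theorem \ref{thm:nonvanishing}, which is the route this paper pursues below. All the remaining cases, as sketched above, are formal consequences of Riemann--Roch, Serre duality, Miyaoka's positivity of $c_2$, and Kawamata--Viehweg vanishing.
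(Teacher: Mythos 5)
Your proposal follows the paper's own argument essentially step for step: reduction to $\kappa(X,L)\geq0$ via Proposition \ref{pro:semiample}, Riemann--Roch together with Miyaoka's $c_2(X)\in\NEb(X)$ to force linear growth of $h^0+h^2$ (with $h^3$ vanishing by Serre duality), Kawamata--Viehweg vanishing to kill $h^2$ when $\nu(X,L)=2$, and a deferral of the delicate $\nu(X,L)=1$ case to \cite{Og93} and the Hard Lefschetz/reflexive-differentials machinery --- which is precisely what the paper does here. The proposal is correct and takes essentially the same approach.
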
 
 
An important step in our general approach in \cite{LOP16} towards Conjecture \ref{conj:Ab1} is given by the following proposition, which is already stated in \cite{Wi94}.
 
\begin{pro} \label{prop:van} 
Let $L$ be a nef line bundle on a Calabi-Yau threefold $X$. Suppose that $\kappa(X,L) = - \infty$. Then 
$$ H^0\big(X,\Omega^q_X \otimes \sO_X(mL)\big) = 0$$
for all $m \gg 0$ and all $q$. 
\end{pro}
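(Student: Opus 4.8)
The plan is to combine the stability result for the cotangent bundle (Theorem~\ref{thm:CP11}) with the Beauville--Bogomolov structure theory to rule out the existence of nonzero reflexive differentials twisted by $mL$ when $L$ carries no sections. First I would reduce to the case where $X$ itself is a \emph{strict} Calabi--Yau threefold, i.e.\ has no finite \'etale cover with extra holomorphic forms: since $H^1(X,\mathcal O_X)=0$ and $X$ is a Calabi--Yau threefold, by the Beauville--Bogomolov decomposition $X$ is already simply connected (this is part of the definition of Calabi--Yau used in the excerpt), so $h^0(X,\Omega^1_X)=h^0(X,\Omega^2_X)=0$, and $h^0(X,\Omega^3_X)=1$ with $\Omega^3_X\cong\mathcal O_X$. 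The cases $q=0$ and $q=1,2$ require separate but easy handling; the case $q=3$ is the interesting one.

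For $q=1$: a nonzero section of $\Omega^1_X\otimes\mathcal O_X(mL)$ gives an injection $\mathcal O_X(-mL)\hookrightarrow\Omega^1_X$, hence a torsion-free quotient $\Omega^1_X\twoheadrightarrow\mathcal Q$ with $c_1(\mathcal Q)=mL$ (using $c_1(\Omega^1_X)=-K_X\equiv 0$); but also the saturation of the sub-line-bundle gives $c_1$ of the sub equal to $-mL+(\text{effective})$, and dualizing, Theorem~\ref{thm:CP11} applied to the pair $(X,0)$ (with $K_X$ pseudoeffective since $K_X\equiv 0$) forces $c_1$ of any torsion-free quotient of $\Omega^1_X$ to be pseudoeffective. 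One then plays this against the dual statement for $\Omega^1_X\otimes\mathcal O_X(-mL)$ — equivalently, a quotient of $T_X$ — to conclude that $mL$ must be both pseudoeffective and anti-pseudoeffective, hence numerically trivial; since $L$ is nef and $X$ is Calabi--Yau with $\rho\ge1$, a numerically trivial nef divisor with a section would be trivial, contradicting $\kappa(X,L)=-\infty$ unless the section space is $0$ to begin with. The case $q=2$ is dual to $q=1$ via $\Omega^2_X\cong (\Omega^1_X)^*$ (using $\Omega^3_X\cong\mathcal O_X$), so the same argument applies. The case $q=3$ is immediate: $\Omega^3_X\otimes\mathcal O_X(mL)\cong\mathcal O_X(mL)$, and $h^0=0$ for all $m>0$ by hypothesis $\kappa(X,L)=-\infty$.

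The main obstacle will be making the $q=1$ argument fully rigorous when $L$ is only nef and not big: one must ensure that the sub-sheaf $\mathcal O_X(-mL)\hookrightarrow\Omega^1_X$ produced by a section does not, after saturation, acquire so large a correction divisor that the pseudoeffectivity constraint from Theorem~\ref{thm:CP11} becomes vacuous. The clean way around this is to apply Theorem~\ref{thm:CP11} not to a single $m$ but to the whole graded family: if $h^0(X,\Omega^1_X\otimes\mathcal O_X(mL))>0$ for infinitely many $m$, then passing to tensor powers $\Omega^1_X{}^{\otimes k}$ and using that $mL$ grows linearly while $c_1(\Omega^1_X{}^{\otimes k})\equiv 0$ forces $L$ itself to be anti-pseudoeffective; combined with nefness of $L$ this gives $L\equiv 0$, and then $\kappa(X,L)=-\infty$ is absurd. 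So in fact one should only prove the vanishing for all $m\gg0$ simultaneously (which is all the statement claims), rather than for each individual $m$, and this is exactly what the graded argument delivers.

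Finally I would remark that this Proposition is the differential-forms half of the strategy: combined with Proposition~\ref{pro:semiample} (which reduces abundance to $\kappa(X,L)\ge0$) and with the Hard Lefschetz surjectivity of Theorem~\ref{thm:DPS}, the vanishing $h^0(X,\Omega^q_X\otimes\mathcal O_X(mL))=0$ propagates to $h^q(X,\Omega^3_X\otimes\mathcal O_X(mL))=0$ for all $q$ (once $L$ carries a semipositive singular metric, e.g.\ if $L$ is hermitian semipositive), forcing $\chi(X,\mathcal O_X(mL))=0$ identically in $m$ and hence $L\cdot c_2(X)=0$ — which is the hypothesis excluded in Proposition~\ref{prop:c2}. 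Thus the logical role of Proposition~\ref{prop:van} is to set up the Euler-characteristic contradiction that reduces Conjecture~\ref{conj:Ab1} for Calabi--Yau threefolds to the case $L\cdot c_2(X)=0$, which then needs the separate analysis of numerical dimension $1$.
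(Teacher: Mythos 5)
You have correctly identified Theorem~\ref{thm:CP11} as the engine and correctly flagged the saturation defect as the danger point, and your handling of $q=0$ and $q=3$ (via $\Omega^3_X\cong\sO_X$ and $\kappa(X,L)=-\infty$) is fine. But the mechanism you propose for $q=1,2$ does not work. Theorem~\ref{thm:CP11} constrains quotients of tensor powers of $\Omega^1_X$ only; there is no ``dual statement'' for quotients of $T_X$ (rank-one quotients of the tangent bundle of a $K$-trivial variety can have arbitrary sign -- think of foliations), so the step where you ``play this against the dual statement \dots to conclude that $mL$ must be both pseudoeffective and anti-pseudoeffective'' has no support. Concretely: a section of $\Omega^1_X\otimes\sO_X(mL)$ gives $\sO_X(-mL)\hookrightarrow\Omega^1_X$ with saturation $\sO_X(-mL+E_m)$, $E_m\geq0$, and Campana--P\u{a}un applied to the quotient only says that $mL-E_m$ is pseudoeffective. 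Since $mL$ is nef this is a constraint on $E_m$, not on $L$, and it produces no tension whatsoever with $\kappa(X,L)=-\infty$; in particular ``$L$ is anti-pseudoeffective, hence $L\equiv0$'' does not follow, and the graded/tensor-power variant in your third paragraph suffers from exactly the same defect. (Also, $q=2$ should be treated as a direct summand of $(\Omega^1_X)^{\otimes 2}$, to which Theorem~\ref{thm:CP11} applies, rather than by ``duality'' with $T_X$.)

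What the actual argument does, and what is missing from your proposal, is the following two-step structure. First, one takes $\sF\subseteq\Omega^q_X$ to be the \emph{smallest} subsheaf containing the images of \emph{all} the inclusions $\sO_X(-mL)\to\Omega^q_X$, so that after saturating $\det\sF$ inside $\bigwedge^r\Omega^q_X$ one obtains a single divisor $F$, independent of $m$, with $H^0(X,\sO_X(mL-F))\neq0$ for infinitely many $m$; Campana--P\u{a}un applied to $\bigwedge^r\Omega^q_X/\sO_X(-F)$ together with $K_X\sim0$ shows that this fixed $F$ is pseudoeffective. Second -- and this is the step your proposal has no substitute for -- one must derive $\kappa(X,L)\geq0$ from the relations $mL\sim N_m+F$ with $N_m\geq0$ and $F$ pseudoeffective. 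This is a genuinely separate input: for $\nu(X,L)=1$ it uses the Hodge index theorem on a hyperplane section together with Nakayama's divisorial Zariski decomposition, and in general it is the MMP-with-scaling argument of Theorem~\ref{thm:MMPtwistCY}. Without this second step the pseudoeffectivity of $F$ is simply not a contradiction, so the proof as proposed does not close.
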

 
We give some ideas of the proof following \cite[Proposition 3.4]{LOP16}. Suppose to the contrary that there exists a number $q$ such that 
$$ H^0\big(X,\Omega^q_X \otimes \sO_X(mL)\big) \neq 0 $$
for infinitely many positive integers $m$. Equivalently, there are infinitely many inclusions $\sO_X(-mL) \to \Omega^q_X$.  Let $\sF \subseteq \Omega^q_X$ be the smallest subsheaf containing the images of all these inclusions, and let $r$ be the rank of $\sF$. Taking determinants and saturation, we obtain a divisor $F$ such that $\sO_X(-F) $ is the saturation of $\det \sF$ in $\bigwedge^r\Omega^q_X$, and such that
$$ H^0\big(X,\mathcal O_X(mL-F)\big) \ne 0$$
for infinitely many $m$. Now consider the induced exact sequence
$$ 0 \to \sO_X(-F) \to \bigwedge^r\Omega^q_X \to \mathcal Q \to 0.$$
Since $\sO_X(-F)$ is saturated,  the sheaf $\mathcal Q$ is torsion free, and hence $c_1(\mathcal Q)$ is pseudoeffective by \cite{CP11,CP15}. As $K_X \sim0$,  we deduce that $F=c_1(\mathcal Q)$, hence the divisor $F$ is pseudoeffective. Thus for all $m$, we find a Weil divisor $N_m\geq0$ such that 
$$ N_m + F \sim mL.$$ 
When $\nu(X,L) = 1$, this can be ruled out using the Hodge index theorem on a hyperplane section and Nakayama's divisorial Zariski decomposition \cite{Nak04}. When $\nu(X,L) = 2 $, we certainly obtain $\rho(X) \geq 3$, which suffices for the most interesting statements below. A more general argument, valid for all values of $\rho(X)$, is given in \cite[Theorem 8.1]{LP16}, see Theorem  \ref{thm:CYn-1} below.

\vskip .2cm  

As a consequence, we have:

\begin{cor} \label{cor:HLF} 
Let $L$ be a nef line bundle on a Calabi-Yau threefold $X$. Suppose that $\kappa(X,L) = - \infty$. Then there exists a number $m_0$ such that for all $m \geq m_0 $ and all $q \geq 0$ we have
$$ H^q(X,mL) = 0.$$
\end{cor}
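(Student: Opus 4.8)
The plan is to combine the Hard Lefschetz Theorem \ref{thm:DPS} with Proposition \ref{prop:van} to kill a multiplier--ideal--twisted version of the cohomology of $\OO_X(mL)$, and then to descend to $\OO_X(mL)$ itself using that a nef divisor is divisorially trivial in the sense of Nakayama. The case $q=0$ is immediate, since $\kappa(X,L)=-\infty$ forces $h^0(X,mL)=0$ for every $m\geq1$. For the Euler characteristic, note that $L$ nef with $\kappa(X,L)=-\infty$ cannot be big, so $L^3=0$, and by Proposition \ref{prop:c2} we must have $L\cdot c_2(X)=0$, for otherwise $L$ would be semiample, hence effective. Since $\chi(X,\OO_X)=0$ on a Calabi--Yau threefold, Riemann--Roch gives
$$\chi\big(X,\OO_X(mL)\big)=\tfrac{m^3}{6}L^3+\tfrac{m}{12}\,L\cdot c_2(X)+\chi(X,\OO_X)=0$$
for all $m$. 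It therefore suffices to prove $H^2(X,mL)=H^3(X,mL)=0$ for $m\gg0$: together with $h^0(X,mL)=0$ this forces $h^1(X,mL)=0$ as well.

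Since $L$ is nef it is pseudoeffective, so for each $m$ the line bundle $\OO_X(mL)$ carries its minimal singular metric $h_m$, whose curvature current is semipositive. As $X$ is Calabi--Yau, $\Omega^3_X=K_X\cong\OO_X$, so the Hard Lefschetz Theorem \ref{thm:DPS} applied to $\mathcal L=\OO_X(mL)$ yields, for every $q\geq0$, a surjection
$$H^0\big(X,\Omega^{3-q}_X\otimes\OO_X(mL)\otimes\mathcal I(h_m)\big)\longrightarrow H^q\big(X,\OO_X(mL)\otimes\mathcal I(h_m)\big).$$
Since $\mathcal I(h_m)\subseteq\OO_X$ and $\Omega^{3-q}_X\otimes\OO_X(mL)$ is locally free, the source embeds into $H^0(X,\Omega^{3-q}_X\otimes\OO_X(mL))$, which vanishes for all $q$ as soon as $m\gg0$ by Proposition \ref{prop:van} (the threshold there being uniform in $q$, of which only $q\in\{0,1,2,3\}$ occur). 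Hence $H^q\big(X,\OO_X(mL)\otimes\mathcal I(h_m)\big)=0$ for all $q\geq0$ and all $m\gg0$.

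It remains to remove the multiplier ideal. A nef divisor has vanishing negative part $N_\sigma(mL)=0$ in its divisorial Zariski decomposition \cite{Nak04}, so the generic Lelong number of $h_m$ along every prime divisor is zero; consequently the subscheme $Z_m$ cut out by $\mathcal I(h_m)$ has codimension at least $2$, i.e.\ $\dim Z_m\leq1$. Tensoring $0\to\mathcal I(h_m)\to\OO_X\to\OO_{Z_m}\to0$ by the locally free sheaf $\OO_X(mL)$ and using the vanishing just obtained, we get $H^q(X,mL)\cong H^q\big(Z_m,\OO_X(mL)|_{Z_m}\big)$ for every $q$, and the right-hand side vanishes for $q\geq2$ by Grothendieck vanishing on the at most one-dimensional scheme $Z_m$. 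Thus $H^2(X,mL)=H^3(X,mL)=0$ for $m\gg0$, and, as observed above, $H^1(X,mL)=0$ as well, which proves the corollary.

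The step I expect to be delicate is precisely the passage from the multiplier--ideal--twisted vanishing to the untwisted one: this is the only place where the full strength of \emph{nefness} (rather than mere pseudoeffectivity) enters, through the identity $N_\sigma(L)=0$ that bounds $\dim Z_m$ and makes Grothendieck vanishing applicable. In the special case $\nu(X,L)=2$ one may instead bypass the multiplier ideal entirely, since then $H^2(X,mL)=0$ follows directly from Kawamata--Viehweg vanishing, as already used in the discussion preceding Proposition \ref{prop:c2}.
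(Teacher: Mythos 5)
Your overall strategy --- Hard Lefschetz (Theorem \ref{thm:DPS}) plus Proposition \ref{prop:van} to kill the multiplier-ideal-twisted cohomology, then a dimension bound on the zero scheme of $\mathcal I(h_m)$ to descend to $\OO_X(mL)$, with $\chi(X,mL)=0$ supplying the missing $h^1=0$ --- is essentially the paper's argument in the case $\nu(X,L)=1$ (the paper handles $\nu(X,L)=2$ separately by Kawamata--Viehweg, as you note at the end). However, the step you yourself flag as delicate contains a genuine gap.

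You justify $\dim Z_m\leq 1$ by the chain: $L$ nef $\Rightarrow N_\sigma(mL)=0$ $\Rightarrow$ the generic Lelong number of the minimal metric $h_m$ along every prime divisor vanishes. The second implication is false in general. Nakayama's $\sigma_D(mL)$ (and Boucksom's analytic analogue) is computed as a limit over the perturbed classes $c_1(mL)+\varepsilon\omega$, whereas the Lelong number $\nu(T_{h_m},D)$ of the current of minimal singularities in the \emph{unperturbed} class can be strictly larger. The classical example of Demailly--Peternell--Schneider --- $\OO_{\PS(E)}(1)$ for $E$ the nonsplit extension of $\OO$ by $\OO$ on an elliptic curve --- is a nef line bundle whose only closed positive current is the integration current along a curve, so its minimal metric has Lelong number $1$ along a divisor even though $N_\sigma=0$. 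Thus nefness alone does not bound $\dim Z_m$, and this is precisely the place where your proof breaks.

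The paper closes this gap by a different argument: if $V_m$ contained a divisor $D$, then $mL-D$ would be pseudoeffective (\cite[Lemma 3.7]{LOP16}), and when $\nu(X,L)=1$ this is excluded by the Hodge index theorem on a hyperplane section (\cite[Lemma 3.2]{LOP16}). That argument genuinely uses the numerical dimension hypothesis, not just nefness, which is why the two cases $\nu(X,L)=1$ and $\nu(X,L)=2$ are treated by different methods. To repair your proof you would need to substitute this Hodge-index argument (or another justification) for the Lelong-number claim; the remainder of your write-up is sound and matches the paper.
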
 

Indeed, if $\nu(X,L) = 2$, then as above we already have the vanishing for $q \ne 1$, and the result follows from $\chi(X,mL) = 0$, since $L \cdot c_2(X) = 0$ by Proposition \ref{prop:c2}.

The case $\nu(X,L) = 1$ needs more considerations. Obviously $ H^3(X,\sO_X(mL)) = 0$ for all positive integers $m$ by Serre duality, hence it suffices to show that 
$$ H^2(X,\sO_X(mL)) = 0\quad\text{for large $m$.}$$  
We consider a singular hermitian metric $h$ on the line bundle $\sO_X(mL)$ with semipositive curvature current; such a metric always exist, and we refer to \cite{Dem01} for a detailed discussion. Consider the induced metric $h^m$ on $\sO_X(mL)$, and let $\sI(h^m)$ be the associated multiplier ideal with the corresponding complex subspace $V_m\subseteq X$. It is crucial to observe that $\dim V_m \leq 1$: otherwise $V_m$ would contain a divisor $D$ such that $mL - D$ is pseudoeffective \cite[Lemma 3.7]{LOP16}, which can be ruled out using the Hodge index theorem \cite[Lemma 3.2]{LOP16}. Therefore, we obtain a surjection
$$ H^2\big(X, \sI(h^m) \otimes \sO_X(mL)\big) \to H^2\big(X,\sO_X(mL)\big),$$ 
and it suffices to show that
$$ H^2\big(X, \sI(h^m) \otimes \sO_X(mL)\big) = 0\quad\text{for large $m$.}$$  
Now Proposition \ref{prop:van} gives 
$$H^0(X,\Omega^1_X \otimes \sO_X(mL)) = 0\quad\text{for $m\gg0$}, $$
so a fortiori,
$$H^0\big(X,\Omega^1_X \otimes \sI(h^m) \otimes \sO_X(mL)\big) = 0.$$
By Theorem \ref{thm:DPS}, we obtain a surjective map
$$H^0\big(X,\Omega^1_X \otimes \sI(h^m) \otimes \sO_X(mL)\big) \to  H^2\big(X, \sI(h^m) \otimes \sO_X(mL)\big), $$
which gives the desired vanishing, and proves Corollary \ref{cor:HLF}.

\medskip

The key criterion towards abundance on Calabi-Yau threefolds is the following.

\begin{thm} \label{ample} 
Let $X$ be a Calabi-Yau threefold with $c_3(X) \ne 0$ and let $L$ be a nef divisor on $X$ with $\nu(X,L) =2$. Assume that there is a very ample divisor $H$ and a positive integer $m$  such that for general $D \in | H |$ the following holds:
\begin{enumerate} 
\item the vector bundle $\Omega^1_X(\log D) \otimes \mathcal O_X(mL)$ is nef, and
\item the divisor $L|_D $ is ample. 
\end{enumerate} 
Then $L$ is semiample. 
\end{thm}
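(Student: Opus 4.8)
The plan is to deduce the semiampleness of $L$ from the nonvanishing statement in Proposition~\ref{prop:van} (or rather its consequence Corollary~\ref{cor:HLF}) by producing, out of the geometric hypotheses (1) and (2), a nonzero section of $\Omega^q_X\otimes\OO_X(mL)$ for infinitely many $m$, thereby reaching a contradiction if $\kappa(X,L)=-\infty$. Concretely, suppose for contradiction that $L$ is not semiample; by Proposition~\ref{pro:semiample} this forces $\kappa(X,L)=-\infty$. We want to exploit hypothesis (1), the nefness of $\Omega^1_X(\log D)\otimes\OO_X(mL)$, together with a Chern class computation involving $c_3(X)\neq0$, to force a nontrivial cohomology class to survive.

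First I would set up the conormal/residue exact sequence for the log pair $(X,D)$:
$$0\to\Omega^1_X\to\Omega^1_X(\log D)\to\OO_D\to0,$$
twist it by $\OO_X(mL)$, and analyse it via the long exact sequence in cohomology together with the restriction sequence $0\to\OO_X((m-h)L)\to\OO_X(mL)\to\OO_D(mL|_D)\to0$, where $h$ refers to the class of $H$. Hypothesis (2) says $L|_D$ is ample on the surface $D$, so on $D$ we have good positivity and, via Kawamata--Viehweg vanishing on $D$ (using $K_D=(K_X+D)|_D=D|_D=H|_D$ and adjusting multiples), we can control $H^i(D,\Omega^1_D\otimes\OO_D(mL|_D))$ and $H^i(D,\OO_D(mL|_D))$ for $i\geq1$ and large $m$. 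The point of hypothesis (1) is that a nef vector bundle $\sE:=\Omega^1_X(\log D)\otimes\OO_X(mL)$ of rank $3$ on the threefold $X$ has $\ch(\sE)$ with the property that $s_3(\sE)$ (the third Segre class, or equivalently an appropriate combination of Chern classes) is $\geq0$ against effective cycles; combining this with a Riemann--Roch computation for $\chi(X,\sE)=\chi(X,\Omega^1_X(\log D)\otimes\OO_X(mL))$ — which is a cubic polynomial in $m$ whose leading behaviour is governed by $\nu(X,L)=2$ (so the $m^3$ and, because $L^3=0$, also various terms vanish) and whose constant/linear-in-$m$ part picks up $c_3(X)$ and $c_1\cdot c_2$ — should show that $\chi(X,\Omega^1_X(\log D)\otimes\OO_X(mL))\neq0$ for large $m$ (this is where $c_3(X)\neq0$ enters decisively, much as $c_2(X)\neq0$ did in Proposition~\ref{prop:c2}). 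Then by Corollary~\ref{cor:HLF}, the vanishing $H^q(X,mL)=0$ for $q\geq0$ and $m\gg0$, fed through the two exact sequences above, kills all the $\Omega^1_X$-free contributions, forcing a nonzero class in $H^0(X,\Omega^1_X(\log D)\otimes\OO_X(mL))$ or in $H^i(D,\Omega^1_D\otimes\OO_D(mL|_D))$ that cannot be there — contradiction. Hence $L$ is semiample.

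The main obstacle I expect is the Riemann--Roch bookkeeping that isolates the role of $c_3(X)$: one must check that the polynomial $\chi(X,\Omega^1_X(\log D)\otimes\OO_X(mL))$ really is nonzero for large $m$ under the assumptions $\nu(X,L)=2$ and $c_3(X)\neq0$, and in particular that the potentially competing terms (those involving $L^2\cdot c_1(X)=0$ since $K_X\sim0$, and $L^3=0$ since $\nu(X,L)=2$) collapse in the right way so that a $c_3$-term or a term linear in $m$ involving $L\cdot(\text{something})$ dominates and does not accidentally vanish. A secondary technical point is justifying the vanishing of the higher cohomology groups on $D$ of $\Omega^1_D\otimes\OO_D(mL|_D)$: this needs the Akizuki--Nakano or Bogomolov--Sommese type vanishing on the surface $D$, using ampleness of $L|_D$, and must be arranged so that the error terms coming from comparing $\Omega^1_X(\log D)|_D$ with $\Omega^1_D$ (there is an extension $0\to\Omega^1_D\to\Omega^1_X(\log D)|_D\to\OO_D\to0$) are also under control. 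Finally, one should make sure the nefness hypothesis (1) is used in the strong form — nefness of $\sO_{\PS(\sE)}(1)$ — so that the required inequality on Segre classes against the (effective, since $X$ is a Calabi--Yau threefold with $\rho\geq2$ in the interesting cases) curve classes is genuinely available; this is the structural input that replaces, in higher dimensions, the ad hoc surface arguments of \cite{Og93}.
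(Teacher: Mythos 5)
Your reduction to $\kappa(X,L)=-\infty$ via Proposition \ref{pro:semiample}, and your instinct to play the residue sequence $0\to\Omega^1_X\to\Omega^1_X(\log D)\to\sO_D\to0$ against a Riemann--Roch computation, are both in the right direction, but the central mechanism you propose breaks down at the decisive step. The Euler characteristic you want $c_3(X)$ to govern is the wrong one: for $\sE=\Omega^1_X(\log D)\otimes\sO_X(mL)$, using $c_1(X)=0$, $L^3=0$ and $L\cdot c_2(X)=0$ (the last from Proposition \ref{prop:c2}), Riemann--Roch gives
$$\chi(X,\sE)=\tfrac{m^2}{2}\,H\cdot L^2-\tfrac{m}{2}\,H^2\cdot L-\tfrac{c_3(X)}{2}+\tfrac{H^3}{6}+\tfrac{H\cdot c_2(X)}{12},$$
whose leading term $\tfrac{m^2}{2}H\cdot L^2$ is strictly positive because $H$ is very ample and $\nu(X,L)=2$. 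So $\chi(X,\sE)\neq0$ for $m\gg0$ holds trivially, independently of $c_3(X)$, and carries no contradiction: the intended ``impossible'' nonzero class in $H^0(X,\sE)$ is in fact perfectly possible, since $H^0(X,\sE)$ sits in $0\to H^0(X,\Omega^1_X\otimes\sO_X(mL))\to H^0(X,\sE)\to H^0(D,\sO_D(mL|_D))$ and nothing (certainly not Corollary \ref{cor:HLF}, which only controls $H^q(X,mL)$ and not $H^q(X,mL-H)$ or cohomology on $D$) forces it to vanish; note $h^0(D,\sO_D(mL|_D))$ itself grows like $m^2$ because $L|_D$ is ample. For the same reason the Segre/Chern positivity of the nef bundle $\sE$ only yields inequalities such as $c_3(\sE)=-c_3(X)+H\cdot c_2(X)+H^3+mH^2\cdot L+m^2H\cdot L^2\geq0$, which are automatic for $m\gg0$ and cannot detect the sign of $c_3(X)$.

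The object whose Euler characteristic actually sees $c_3(X)$ is $\Omega^1_X\otimes\sO_X(mL)$: the same cancellations give $\chi(X,\Omega^1_X\otimes\sO_X(mL))=-c_3(X)/2$, \emph{independent of $m$}. The argument then needs two vanishings for $m\gg0$, namely $H^2(X,\Omega^1_X\otimes\sO_X(mL))=0$ and $H^2(X,\Omega^2_X\otimes\sO_X(mL))=0$. The first is where hypotheses (1) and (2) are really used, and not numerically: $\sE$ is nef and, for $m$ large, big, so a vanishing theorem for nef and big vector bundles gives $H^2(X,\Omega^1_X(\log D)\otimes\sO_X(mL+D))=0$, hence $H^2(X,\sE)=0$, and one descends to $H^2(X,\Omega^1_X\otimes\sO_X(mL))=0$ through the residue sequence using Kodaira vanishing on $D$ for the ample $L|_D$ (only $H^1(D,\sO_D(mL|_D))=0$ is needed here --- no Akizuki--Nakano statement for $\Omega^1_D$ enters). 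The second vanishing comes from an Esnault--Viehweg theorem. Combining the first with $h^0=0$ from Proposition \ref{prop:van} (and $h^3=0$) gives $\chi\leq0$, i.e.\ $c_3(X)\geq0$; the Serre dual of the second gives $H^1(X,\Omega^1_X\otimes\sO_X(-mL))=0$ and hence $c_3(X)\leq0$. This two-sided squeeze, contradicting $c_3(X)\neq0$, is the missing idea in your proposal, and the nefness hypothesis must be consumed by a cohomological vanishing theorem rather than by positivity of Segre classes.
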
 

We now discuss the basic ideas of the proof and explain the role of the assumption $c_3(X) \ne 0$. We argue by contradiction and aim to show the following two vanishing statements for $m \gg 0$: 
\begin{equation} \label{van1} 
H^2(X,\Omega^1_X \otimes \sO_X(mL)) = 0  
\end{equation} 
and 
\begin{equation} \label{van2}
H^2(X,\Omega^2_X \otimes \sO_X(mL)) = 0.   
\end{equation} 
These two assertions immediately yield a contradiction. Indeed, \eqref{van1} implies
$$ \chi(X,\Omega^1_X \otimes \sO_X(mL)) = - h^1(X,\Omega^1_X \otimes \sO_X(mL)) \leq 0, $$
and since
$$ \chi(X,\Omega^1_X \otimes \sO_X(mL)) = - \frac{c_3(X)}{2} $$
by Proposition \ref{prop:c2} and by  Riemann-Roch, we obtain $c_3(X) \geq 0$. On the other hand, \eqref{van2}  implies by Serre duality 
$$ H^1(X,\Omega^1_X \otimes \sO_X(-mL)) = 0 $$
for $ m \gg 0$, hence the same argument yields $c_3(X) \leq 0$, hence $c_3(X) = 0.$

The vanishing \eqref{van2} follows easily from a vanishing of Esnault-Viehweg \cite[6.4]{EV92}. As for \eqref{van1}, which is the crucial issue, we use the assumption that the locally free sheaf $\Omega^1_X(\log D) \otimes \mathcal O_X(mL)$ is nef. Since $L|_D$ is ample, the residue sequence
$$ 0 \to \Omega^1_X \to \Omega^1_X(\log D) \to \sO_D \to 0$$
allows to reduce \eqref{van1}  to the vanishing
\begin{equation} \label{van3}
H^2(X,\Omega^1_X(\log D) \otimes \sO_X(mL)) = 0 \quad\text{for }m\gg0.
\end{equation} 
Choosing $m$ sufficiently large,  the locally free sheaf $\Omega^1_X(\log D) \otimes \sO_X(mL) $ is not only nef, but even big. By a standard vanishing theorem for big and nef locally free sheaves, we obtain
$$ H^2(X, \Omega^1_X(\log D) \otimes \sO_X(mL + D)) = 0$$
for $m \gg 0,$ which easily implies \eqref{van3}, and shows Theorem \ref{ample}.

\medskip

Theorem \ref{ample} is used to prove 

\begin{thm} \label{thm:sa} 
Let $L$ be a nef line bundle on a Calabi-Yau threefold $X$ with $c_3(X) \ne 0$. Assume that $\rho(X) = 2$ and $\nu(X,L) = 2$. Then $L$ is semiample. 
\end{thm}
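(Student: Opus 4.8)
The plan is to deduce the statement from the criterion of Theorem~\ref{ample}. Since $c_3(X)\neq0$ and $\nu(X,L)=2$ are assumed, it suffices to produce a very ample divisor $H$ and a positive integer $m$ such that, for a general $D\in|H|$, the bundle $\Omega^1_X(\log D)\otimes\OO_X(mL)$ is nef and the divisor $L|_D$ is ample. I would begin by recording the structural consequences of the hypothesis $\rho(X)=2$: since $\nu(X,L)=2<3=\dim X$ the divisor $L$ is not ample, so its class lies on the boundary of the two-dimensional cone $\Nef(X)$, it spans one of the two (rational) extremal rays of $\Nef(X)$, and dually every curve $C$ with $L\cdot C=0$ has class in the opposite extremal ray $R$ of $\NEb(X)$.

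To obtain ampleness of $L|_D$, note that for general $D\in|H|$ the class $L|_D$ is nef and $(L|_D)^2=L^2\cdot H>0$, the latter because $L^2\not\equiv0$ (this is what $\nu(X,L)=2$ says) and $H$ is ample; by the Nakai--Moishezon criterion on the surface $D$ it is then enough to show that a general $D$ contains no curve $C$ with $L\cdot C=0$. This is where $\rho(X)=2$ enters decisively: no prime divisor $E$ on $X$ can satisfy $L|_E\equiv0$, for otherwise the $1$-cycle $L\cdot E$ would be numerically trivial and, together with $L^3=0$ and $\dim_\R N^1(X)_\R=2$, this would force $L^2\cdot M=0$ for all $M\in N^1(X)_\R$, hence $L^2\equiv0$, contradicting $\nu(X,L)=2$. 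Consequently the union of all $L$-null curves is either at most one-dimensional, or else on each of its finitely many divisorial components $E$ one has $(L|_E)^2=0$ with $L|_E\not\equiv0$, so the sublocus of $E$ covered by $L|_E$-trivial curves is a proper closed subset of dimension $\le1$. In every case a general member of $|H|$ meets this locus in finitely many points and so contains no $L$-null curve, which yields ampleness of $L|_D$.

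The nefness of $\Omega^1_X(\log D)\otimes\OO_X(mL)$ is the heart of the matter, and the step I expect to be the main obstacle. Passing to the projective bundle $\pi\colon\PS\big(\Omega^1_X(\log D)\big)\to X$ with tautological class $\xi$, the assertion is equivalent to nefness of $\xi+m\pi^*L$, which upon testing against curves reduces to the inequality $\mu_{\min}\big(\Omega^1_X(\log D)|_C\big)+m\,(L\cdot C)\ge0$ for every irreducible curve $C\subset X$. For curves with $L\cdot C>0$ this amounts to a uniform lower bound $\mu_{\min}\big(\Omega^1_X(\log D)|_C\big)\ge-m\,(L\cdot C)$, which I would extract from generic semipositivity of $\Omega^1_X(\log D)$ --- Theorem~\ref{thm:CP11} applies because $K_X+D=D$ is effective --- combined with the fact that, $\rho(X)$ being $2$, only the ray $R$ is annihilated by $L$, which controls how small $L\cdot C$ can be relative to $H\cdot C$. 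For the curves with $L\cdot C=0$, that is $[C]\in R$, one must show $\Omega^1_X(\log D)|_C$ is itself nef: for a general $D$ the residue sequence restricts to
$$0\longrightarrow N_{C/X}^{\vee}\longrightarrow\Omega^1_X(\log D)|_C\longrightarrow\Omega^1_C\otimes\OO_C(C\cap D)\longrightarrow0,$$
and the quotient acquires arbitrarily large positive degree once $H$ is chosen sufficiently ample, so that --- using that the $L$-null curves have normal bundles constrained by $\rho(X)=2$, $\nu(X,L)=2$ and the absence of $L$-trivial prime divisors found above --- a general such extension is nef. Making this control of $N_{C/X}^{\vee}$ precise along all $L$-null curves, and choosing $H$ (say a large multiple of a fixed very ample divisor) and $m$ uniformly, is the delicate point.

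Granting such $H$, $m$ and a general $D\in|H|$, both hypotheses of Theorem~\ref{ample} are met; since $c_3(X)\neq0$ and $\nu(X,L)=2$ hold by assumption, Theorem~\ref{ample} then gives that $L$ is semiample.
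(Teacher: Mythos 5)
Your overall frame is the right one and matches the paper: reduce to the criterion of Theorem~\ref{ample} by exhibiting $H$, $m$ and a general $D\in|H|$ with $\Omega^1_X(\log D)\otimes\OO_X(mL)$ nef and $L|_D$ ample. The problem is that the part you yourself flag as ``the heart of the matter'' --- the nefness of $\Omega^1_X(\log D)\otimes\OO_X(mL)$ --- is not proved, and the attack you sketch for it would not go through as stated. Testing $\xi+m\pi^*L$ on curves requires a lower bound on $\mu_{\min}\big(\Omega^1_X(\log D)|_C\big)$ for \emph{every} irreducible curve $C$, but Theorem~\ref{thm:CP11} (pseudoeffectivity of $c_1$ of torsion-free quotients of $\Omega^1_X(\log D)^{\otimes m}$) only controls slopes against movable classes; it says nothing about the restriction of the cotangent sheaf to an individual, possibly rigid, curve, so it cannot produce the uniform bound $\mu_{\min}\geq -m(L\cdot C)$ you need for curves with $L\cdot C>0$, nor nefness of $\Omega^1_X(\log D)|_C$ on $L$-null curves (which need not be rational, since the ray they span is $K_X$-trivial, so their conormal bundles are not a priori constrained). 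The paper's actual route to nefness is different and genuinely uses more: one fixes an ample prime divisor $A$ with $A\cdot c_2(X)$ minimal among all ample prime divisors and proves, using $\rho(X)=2$, the integrality statement that any integral divisor $M\sim_\Q aA+bL$ with $a>0$ in fact has $a\geq1$; this discreteness is the key input into the intersection-theoretic estimates establishing nefness, and nothing in your proposal substitutes for it.

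Your treatment of the ampleness of $L|_D$ is closer to being complete, and your observation that no prime divisor $E$ can satisfy $L|_E\equiv0$ (via $L\cdot E\equiv0$, $L^3=0$, $L^2\cdot H>0$ and $\rho(X)=2$) is correct. But the divisorial case of the $L$-null locus is handled incorrectly: if a prime divisor $E$ is covered by $L$-trivial curves, then indeed $L^2\cdot E=0$, which by the $\rho(X)=2$ computation forces $E\equiv aL$ with $a>0$; at that point it is simply false that the $L$-trivial curves sweep out only a one-dimensional subset of $E$ (think of $E$ fibred over a curve with $L$ pulled back from the base). The correct conclusion in this branch is not a contradiction but that $L$ is numerically, hence (on a Calabi--Yau threefold) $\Q$-linearly, proportional to the effective divisor $E$, so $\kappa(X,L)\geq0$ and $L$ is semiample directly by Proposition~\ref{pro:semiample} --- which is exactly how the paper disposes of this case. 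So the ampleness step is repairable, but the nefness step is a genuine missing idea rather than a detail.
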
 

The condition $\rho(X) = 2$ is used twofold in the verfication of the assumptions of Theorem \ref{ample}. First, we choose an ample divisor $A$ such that $A \cdot c_2(X) $ is minimal under all ample prime divisors on $X$.  Then, using $\rho(X) = 2,$ one shows that given an integral divisor $M$ on $X$ such that $M \sim_\Q aA + bL$ with $a,b\in \Q$ and $a > 0$, then actually $a \geq 1$. This is important for calculations establishing the nefness of $\Omega^1_X(\log D) \otimes \mathcal O_X(mL) $: the proof is rather tricky, and we refer to \cite{LOP16} for details. 

Second, let $H$ be a very ample divisor on $X$ and $D \in | H | $ general. Then $L|_D$ is ample: otherwise, we would obtain a family of curves $(C_t)$ such that $L \cdot C_t = 0$, which would then cover a surface $S\subseteq X$ such that $L^2 \cdot S = 0$. The assumption $\rho(X) = 2$ then implies $S \in | mL |$.

Having verified the assumptions of Theorem \ref{ample}, Theorem \ref{thm:sa} follows. 

\begin{rem} 
Putting things together, let $X$ be a Calabi-Yau threefold with $\rho(X) = 2$ and let $L$ be a nef divisor on $X$ such that $\nu(X,L) = 2$. Suppose that $\kappa (X,L) = - \infty$. Then the following assertions hold:
\begin{enumerate}
\item $ c_3(X)  = 0$,
\item $H^q(X, \Omega^p_X \otimes \sO_X(mL)) = 0$ for $m \gg 0$ and all $p$ and $q.$
\end{enumerate} 
Assertion (2) looks very awkward and one might speculate that (2)  can never happen for a nef line bundle on a say simply connected projective manifold. 
\end{rem} 

\begin{rem} 
If  $\nu(X,L) = 1$, again the critical case is $L \cdot c_2(X) = 0$. In this case, for any irreducible surface $S \subset X$, the restricted  line bundle $L|_S$ is never big. One would expect to find curves $C \subseteq S$ such that $L \cdot C = 0$, however $L|_S$ could be strictly nef; see Subsection \ref{ss:strict} for the definition and discussion of strictly nef line bundles. If there exists a family $(C_t)_{t \in T}$ of curves covering $X$ with $L \cdot C_t$, then $L$ is semiample \cite{LOP16}.  If $\dim T \geq 2$, but the curves cover a surface, then $L$ is semiample, at least when $\rho(X) = 2$.
\end{rem} 

\begin{rem} 
The existence of a semiample non-ample divisor $D \ne 0$ on a Calabi-Yau threefold often has a significant impact on the geometry of $X$. Indeed, let 
$$ \varphi\colon X \to Y$$
be the morphism associated to the linear system $| mD |$ for $m$ sufficiently divisible. Then one of the following cases occurs. 
\begin{enumerate} 
\item $\nu(X,L) = 1$ and $\varphi$ is a K3-fibration or an abelian fibration over $Y \simeq \bP^1$.  Moreover,
$L \cdot c_2(X) = 0$ if and only if $\varphi$ is an abelian fibration.
\item $\nu(X,L) = 2$ and $\varphi$ is an elliptic fibration over a normal projective rational surface $Y$. Moreover, there is a $\Q$-divisor $D\geq0$ such that the pair $(Y,D)$ is klt and $K_Y + D \sim_\Q 0.$ 
\item $\nu(X,L) = 3$ and $Y$ is a normal projective variety with canonical singularities such that $K_Y \sim_\Q 0$. A more detailed structure of $\varphi$ can be given, see \cite{Wi92,Wi93,Wi97}. 
\end{enumerate} 
\end{rem}

\subsection{Abundance conjecture: Calabi-Yau varieties in higher dimensions}

We now discuss nef divisors $L$ on a normal projective klt variety $X$ of any dimension such that $K_X \sim_\Q 0$. In higher dimensions there is a priori a big difference between effectivity of (a multiple of) $L$ and semiampleness of $L$, since abundance in higher dimensions is still wide open. The discussion here follows closely the discussion in Subsection \ref{subsection:4.1}.

Concerning nonvanishing, we start with a generalization of Proposition \ref{prop:van}. 

\begin{thm} \label{thm:CYn-1}
Assume the existence of good models for klt pairs in dimensions at most $n-1$. Let $X$ be a $\mathbb Q$-factorial projective klt variety of dimension $n$ such that $K_X\sim_\mathbb Q0$, and let $L$ be a nef divisor on $X$ such that $\kappa(X,L) = - \infty$. Let $\pi\colon Y\to X$ be a resolution of $X$. Then for every $p\geq1$ we have
$$ H^0(Y,(\Omega^1_Y)^{\otimes p} \otimes \sO_Y(m\pi^*L))=0\quad\text{for all $m\neq0$ sufficiently divisible}.$$
\end{thm}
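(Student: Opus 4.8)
The plan is to argue by contradiction, mimicking the proof of Proposition \ref{prop:van} but replacing the ad hoc threefold arguments with the minimal model machinery available in dimension $\leq n-1$. Suppose that $H^0\big(Y,(\Omega^1_Y)^{\otimes p}\otimes\sO_Y(m\pi^*L)\big)\neq0$ for some fixed $p\geq1$ and infinitely many $m$ (sufficiently divisible). Each such nonzero section gives an inclusion $\sO_Y(-m\pi^*L)\hookrightarrow(\Omega^1_Y)^{\otimes p}$. Let $\sF\subseteq(\Omega^1_Y)^{\otimes p}$ be the saturated subsheaf generated by the images of all these inclusions, let $r=\rk\sF$, and let $F$ be the divisor with $\sO_Y(-F)$ the saturation of $\det\sF$ inside $\bigwedge^r\big((\Omega^1_Y)^{\otimes p}\big)$. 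By construction $H^0\big(Y,\sO_Y(m\pi^*L-F)\big)\neq0$ for infinitely many $m$. Applying Theorem \ref{thm:CP11} to the torsion-free quotient $(\Omega^1_Y)^{\otimes pr}\to\mathcal Q$ coming from the exact sequence $0\to\sO_Y(-F)\to\bigwedge^r\big((\Omega^1_Y)^{\otimes p}\big)\to\mathcal Q\to0$ — after first noting $K_Y+\Delta$ is pseudoeffective for a suitable snc divisor, since $K_X\sim_\Q0$ and $\pi$ has only exceptional discrepancies — we conclude that $c_1(\mathcal Q)$, and hence $F$, is pseudoeffective. Thus, pushing down to $X$ and using $K_X\sim_\Q0$, we obtain for every sufficiently divisible $m$ an effective Weil divisor $N_m\geq0$ and a pseudoeffective divisor $F_X$ (the pushforward of $F$, modulo exceptional divisors, which we may absorb) with
$$N_m\sim_\Q mL-F_X.$$

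Next I would run the endgame exactly as sketched after Theorem \ref{thm:nonvanishing}: fix one value $k$ and consider the pair $(X,\varepsilon N_k)$ for small rational $\varepsilon>0$, which is klt since $X$ is $\Q$-factorial klt. Run a $(K_X+\varepsilon N_k)$-MMP with scaling of an ample divisor. Since $K_X\sim_\Q0$, this is an $\varepsilon N_k$-MMP, and because $N_k\sim_\Q kL-F_X$ with $L$ nef but $F_X$ only pseudoeffective, the divisor $N_k$ is \emph{not} nef in general, so the program is nontrivial; it terminates assuming the existence of good models in dimension $\leq n-1$ (one runs it until one reaches either a minimal model or a Mori fibre space, using that $N_k$ is not pseudoeffective over the base — here is where one needs the hypothesis, via the standard reduction of termination/good-model statements to lower dimension as in \cite{LP16}). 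One obtains a birational contraction $\phi\colon X\dashrightarrow X'$ and a fibration $f\colon X'\to Z$ with $\dim Z<n$ onto a klt base, such that on $X'$ one has $K_{X'}\sim_\Q f^*D$ for a divisor $D$ on $Z$, with $D$ big by construction of the fibration (the negative part being contracted). But $K_{X'}\sim_\Q0$, forcing $D\equiv0$, contradicting bigness — unless $\dim Z=0$, in which case $X'$, hence $X$, is already such that the original $N_k$ (transported) is trivial, giving $\kappa(X,L)\geq0$, again a contradiction with $\kappa(X,L)=-\infty$. Therefore the assumed infinitely many sections cannot exist.

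The main obstacle I expect is \textbf{the termination and good-model input for the MMP}: one must run a non-$K$-negative MMP (scaled MMP for $\varepsilon N_k$ with $K_X\sim_\Q0$), and guaranteeing that it terminates with a Mori fibre space structure adapted to the pseudoeffective-but-not-big divisor $F_X$ requires the full strength of the inductive hypothesis on good models in dimension $\leq n-1$, together with a careful choice of $N_k$ among the infinitely many available (one wants $N_k$ whose support and coefficients make $(X,\varepsilon N_k)$ behave well, e.g. so that the not-pseudoeffectivity of $N_k$ over the MMP output is genuine). A secondary technical point is bookkeeping the resolution $\pi\colon Y\to X$: one must pass cleanly between reflexive differentials on $X$ and honest differentials on $Y$, controlling the exceptional locus so that Theorem \ref{thm:CP11} applies on a log-smooth model and the resulting pseudoeffectivity descends to $X$ — this is routine but must be done with the snc pair $(Y,\Delta)$, $\Delta=\Exc(\pi)$ reduced, in place. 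Once these two points are handled, the rest follows formally from Theorem \ref{thm:CP11} and the MMP endgame, precisely as in the threefold case treated in Proposition \ref{prop:van}.
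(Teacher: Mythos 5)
Your first half is exactly the paper's argument: the paper proves this statement by running the mechanism of Proposition \ref{prop:van} on the resolution (minimal saturated subsheaf $\sF$ of $(\Omega^1_Y)^{\otimes p}$ containing all the images of $\sO_Y(-m\pi^*L)$, saturation of $\det\sF$, Theorem \ref{thm:CP11} applied to the resulting torsion-free quotient of $(\Omega^1_Y)^{\otimes pr}$ to get $F$ pseudoeffective, hence $N_m+F\sim_\Q mL$ for infinitely many $m$), and then quotes Theorem \ref{thm:MMPtwistCY} (\cite[Theorem 8.2]{LP16}) to conclude $\kappa(X,L)\geq0$, contradicting the hypothesis. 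So the architecture of your proposal is the intended one, and your handling of the log-smooth pair $(Y,\Exc(\pi))$ and the descent of pseudoeffectivity to $X$ is fine.

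The gap is in your MMP endgame, which is supposed to reprove Theorem \ref{thm:MMPtwistCY} but as written would fail. You conclude the $(X,\varepsilon N_k)$-MMP with a fibration $f\colon X'\to Z$ and $K_{X'}\sim_\Q f^*D$ with $D$ \emph{big}; that is the output of the nonvanishing argument for $K_X$ pseudoeffective (Theorem \ref{thm:nonvanishing}), where bigness of $D$ is the whole point and yields $\kappa(X,K_X)\geq0$. In the present setting $K_{X'}\sim_\Q0$, so $f^*D\sim_\Q0$ and there is no reason for $D$ to be big; the ``contradiction'' you extract (a divisor both big and numerically trivial) is an internal inconsistency of the sketch, not a contradiction with $\kappa(X,L)=-\infty$, and the $\dim Z=0$ branch does not give $\kappa(X,L)\geq0$ either ($N_k$ being contracted only says $kL\sim_\Q F$ modulo an exceptional divisor). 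What Theorem \ref{thm:MMPtwistCY} actually delivers is the identity $\kappa(X,L)=\max_m\kappa(X,N_m)\geq0$, i.e.\ one must transfer the sections of the effective divisors $N_k$ to sections of multiples of $L$ itself. For that the nefness of $L$ has to enter in an essential way (e.g.\ via the vanishing of Nakayama's $\sigma_P(mL)$ for every prime divisor $P$, or via the $L$-triviality of the rays contracted by the $\varepsilon N_k$-MMP, so that $L$ descends and can be compared with the pushforward of $N_k$ on the resulting model); your sketch never uses that $L$ is nef after the reduction to $N_m+F\sim_\Q mL$. This missing step is precisely the ``important new ingredient'' the paper isolates, so you should either supply that argument or cite \cite[Theorem 8.2]{LP16} as a black box, as the paper does.
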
 

The proof follows the argument of the proof of Proposition \ref{prop:van} with the following important new ingredient using the techniques of the Minimal Model Program, see \cite[Theorem 8.2]{LP16}. 

\begin{thm} \label{thm:MMPtwistCY}
Assume the existence of good models for klt pairs in dimensions at most $n-1$. Let $X$ be a $\mathbb Q$-factorial projective klt variety of dimension $n$ such that $K_X\sim_\mathbb Q0$, and let $L$ be a nef divisor on $X$. Assume that there exist a pseudoeffective $\bQ$-divisor $F$ on $X$ and an infinite subset $\mathcal S\subseteq \bN$ such that 
$$N_m+F\sim_\bQ mL$$
for all $m\in\mathcal S$, where $N_m\geq0$ are integral Weil divisors. Then 
$$\kappa(X,L)=\max\{\kappa(X,N_m)\mid m\in\mathcal S\}\geq0.$$
\end{thm}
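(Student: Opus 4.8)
Theorem \ref{thm:MMPtwistCY} claims that, given a nef divisor $L$ on a $\Q$-factorial klt variety $X$ with $K_X \sim_\Q 0$, if infinitely many multiples $mL$ decompose as $N_m + F$ with $N_m \geq 0$ Weil and $F$ a fixed pseudoeffective $\Q$-divisor, then $\kappa(X,L) \geq 0$ (and in fact $\kappa(X,L)$ equals the maximum of the $\kappa(X,N_m)$).

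The plan is to run a Minimal Model Program for a suitable klt pair built out of $N_k$ and to push the problem down to dimensions where good models are assumed to exist. First, fix some $k\in\mathcal S$ and a rational number $0<\varepsilon\ll1$ such that $(X,\varepsilon N_k)$ is klt; this is possible since $X$ is klt and $N_k\geq0$. As $K_X\sim_\Q0$ we have $K_X+\varepsilon N_k\sim_\Q\varepsilon N_k\geq0$, so $(X,\varepsilon N_k)$ is a klt pair with effective, in particular pseudoeffective, log canonical class, and in addition
$$K_X+\varepsilon N_k+\varepsilon F\sim_\Q\varepsilon kL$$
is nef. I would then run the $(K_X+\varepsilon N_k)$-MMP with scaling of an ample divisor. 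The assumption that good models exist for klt pairs in dimensions at most $n-1$ gives termination of the MMP with scaling in those dimensions, and hence, via Birkar's results on the existence of log minimal models, this MMP terminates; since $K_X+\varepsilon N_k$ is pseudoeffective it cannot end with a Mori fibre space, so it produces a birational contraction $\phi\colon X\dashrightarrow X'$ onto a $\Q$-factorial klt pair $(X',\varepsilon N_k')$, where $N_k'=\phi_*N_k$ and $K_{X'}+\varepsilon N_k'$ is nef. Because $K_X\sim_\Q0$, the MMP preserves $\Q$-linear triviality of the canonical class, so $K_{X'}\sim_\Q0$ and therefore $N_k'\sim_\Q\varepsilon^{-1}(K_{X'}+\varepsilon N_k')$ is a nef effective divisor.

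Next I would record how the construction interacts with $L$ and $F$. Pushing the $\Q$-linear equivalences forward, $mL'\sim_\Q N_m'+F'$ for all $m\in\mathcal S$, where $L'=\phi_*L$, the divisors $N_m'=\phi_*N_m\geq0$ are integral Weil divisors and $F'=\phi_*F$ is pseudoeffective, and $kL'-F'\sim_\Q N_k'$ is nef. Moreover, each step of the MMP contracts a $(K_X+\varepsilon N_k)$-negative, hence $(kL-F)$-negative, extremal ray $R$; since $L$ is nef this forces $L\cdot R\geq0$ and therefore $F\cdot R>0$, so the MMP is $F$-negative, and the negative part of $F$ in Nakayama's divisorial Zariski decomposition is progressively removed. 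Now comes the heart of the matter: one shows on $X'$ that $\varepsilon N_k'$ is semiample. Since $N_k'\geq0$, nonvanishing is automatic, and the reduction of abundance for klt pairs to good models in lower dimension (together with the numerical-dimension-zero case of abundance from \cite{Nak04} when $\nu(X',N_k')=0$) shows that the minimal model $(X',\varepsilon N_k')$ is in fact good. Hence $N_k'\sim_\Q f^*A_Y$ for a fibration $f\colon X'\to Y$ with $\dim Y=\kappa(X',N_k')=:d$ and $A_Y$ ample. If $d=n$ then $kL'=N_k'+F'$ is big, so $\kappa(X,L)=\kappa(X',L')=n$. If $d<n$, one restricts to a general fibre $X'_y$: by adjunction $K_{X'_y}\sim_\Q0$, and $N_k'|_{X'_y}\sim_\Q0$, so $kL'|_{X'_y}\sim_\Q F'|_{X'_y}$ is pseudoeffective; applying the theorem inductively in dimension $n-d<n$ (its hypotheses hold on $X'_y$ with the effective divisor $0$ playing the role of $N_k$) gives $\kappa(X'_y,L'|_{X'_y})\geq0$, and a relative-to-absolute argument over $Y$ — where $Y$ carries a klt structure via the canonical bundle formula and good models are available since $\dim Y<n$ — together with the equivalences $mL'\sim_\Q N_m'+F'$ yields $\kappa(X',L')\geq0$. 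Transporting back along the birational contraction $\phi$ gives $\kappa(X,L)\geq0$, and a bookkeeping of which of the $N_m$ contribute the sections, using $kL\sim_\Q N_k+F$, gives the precise equality $\kappa(X,L)=\max\{\kappa(X,N_m)\mid m\in\mathcal S\}$.

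The main obstacle, and the reason the statement is not formal, is controlling the pseudoeffective defect $F$ along the MMP: although $L$ is nef on $X$, its strict transform need not remain nef, and $F$ is only pseudoeffective, so one cannot simply add together sections of $N_m$ and of $F$. The crucial mechanism is that the MMP for $(X,\varepsilon N_k)$ is $F$-negative and therefore contracts precisely the part of $F$ that obstructs effectivity, so that on the minimal model the remaining data are positive enough to run the lower-dimensional induction. A secondary technical point is the invariance of the Iitaka dimension under $\phi$, which requires that no divisor carrying sections of a multiple of $L$ is contracted; this is handled using that $\phi$ is a birational contraction and the relevant divisors are $\phi$-nonnegative.
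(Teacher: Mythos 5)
The survey does not actually prove this theorem --- it is quoted from \cite[Theorem 8.2]{LP16} --- but the strategy it sketches for the analogous nonvanishing statement for $K_X$ is the one you follow: make $(X,\varepsilon N_k)$ klt, use the hypothesis on good models in lower dimensions to reach a minimal model on which $\varepsilon N_k'\sim_{\Q}K_{X'}+\varepsilon N_k'$ is semiample, and pass to the induced fibration $f\colon X'\to Y$. So your skeleton is right, but the proposal has genuine gaps at precisely the points where the content lies. The decisive one is your endgame: from ``$\kappa\geq 0$ on the general fibre $X'_y$'' you cannot conclude $\kappa(X',L')\geq 0$; fibrewise effectivity does not globalize, and the ``relative-to-absolute argument over $Y$'' you invoke is exactly the missing theorem. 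Note that the paper's sketch of the $K_X$-case ends with the much stronger output $K_{X'}\sim_{\Q}f^*D$ with $D$ big on $Y$, i.e.\ the divisor in question is shown to \emph{descend} to the base, from which $\kappa=\dim Y$ is immediate; you never show that $L'$ or $F'$ becomes trivial on the fibres of $f$, which is what one needs. Relatedly, you take $k\in\mathcal S$ arbitrary, whereas the asserted equality with $\max_m\kappa(X,N_m)$ (the individual $\kappa(X,N_m)$ can genuinely differ) forces one to choose $k$ with $\kappa(X,N_k)$ maximal; maximality is what should kill the horizontal parts of the $N_m'$ over $Y$, and your closing ``bookkeeping'' remark does not substitute for this.

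Two further steps would fail as written. Transporting the conclusion back along $\phi$ goes the wrong way: for any birational contraction one has $H^0(X,mL)\subseteq H^0(X',mL')$, hence $\kappa(X,L)\leq\kappa(X',L')$, so $\kappa(X',L')\geq 0$ gives no information about $\kappa(X,L)$ unless you prove that $\phi$ is $L$-non-positive; the contracted rays only satisfy $kL\cdot R<F\cdot R$, which does not give $L\cdot R=0$, so $L$ may become non-nef and may gain sections under $\phi$. And the inductive application of the theorem to $X'_y$ requires $L'|_{X'_y}$ to be nef and the fibre to be $\Q$-factorial klt, none of which is verified (you yourself note that nefness can be lost). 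Finally, a sign point: the contracted rays satisfy $F\cdot R>0$, not $F\cdot R<0$, so the MMP is not ``$F$-negative'' and there is no reason it removes the negative part of $F$ in the divisorial Zariski decomposition; that heuristic should be dropped.
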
 

Concerning semiampleness, we have the following \cite[Theorem 8.3]{LP16}.

\begin{thm} \label{thm:FormsCY}
Assume the existence of good models for klt pairs in dimensions at most $n-1$. Let $X$ be a $\bQ$-factorial projective klt variety of dimension $n$ such that $K_X\sim_\bQ0$, and let $L$ be a nef divisor on $X$ which is not semiample. Let $\pi\colon Y\to X$ be a resolution of $X$. Then for all $q \geq 1$ and $m \neq 0$ sufficiently divisible we have
$$ h^0\big(Y,(\Omega^1_Y)^{\otimes q} \otimes \sO_Y(m\pi^*L)\big)\leq r,$$
where $r$ is the rank of $(\Omega^1_Y)^{\otimes q}$. In particular, 
$$ h^0\big(Y,\Omega^q_X \otimes \sO_Y(m\pi^*L)\big) \leq \binom{n}{q}.$$
\end{thm}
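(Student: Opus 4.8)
The plan is to argue by contradiction with the hypothesis that $L$ is not semiample. If $\kappa(X,L)=-\infty$, then Theorem~\ref{thm:CYn-1} gives the stronger conclusion that $H^0\big(Y,(\Omega^1_Y)^{\otimes q}\otimes\sO_Y(m\pi^*L)\big)=0$ for all $m\ne0$ sufficiently divisible, so the asserted bound holds a fortiori; hence we may assume $\kappa(X,L)\ge0$, and fix $m_1>0$ with $h^0\big(X,\sO_X(m_1L)\big)\ge1$. Suppose now that the bound fails: negating ``$m$ sufficiently divisible'' means that for every $m_0$ there is an $m\ne0$ divisible by $m_0$ with $h^0\big(Y,(\Omega^1_Y)^{\otimes q}\otimes\sO_Y(m\pi^*L)\big)\ge r+1$, so this holds for infinitely many $m$.

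The first half of the argument is the extraction of a pseudoeffective divisor, following Proposition~\ref{prop:van} and Theorem~\ref{thm:CYn-1}. Each section of $(\Omega^1_Y)^{\otimes q}\otimes\sO_Y(m\pi^*L)$ is an inclusion $\sO_Y(-m\pi^*L)\hookrightarrow(\Omega^1_Y)^{\otimes q}$; I would let $\sF\subseteq(\Omega^1_Y)^{\otimes q}$ be the smallest saturated subsheaf through which all of them factor, put $r'=\rk\sF\le r$, pick $r'$ of the twisted inclusions that generate $\sF$ at the generic point, and wedge them. Via the exact sequence $0\to\sO_Y(-F_Y)\to\bigwedge^{r'}(\Omega^1_Y)^{\otimes q}\to\mathcal Q\to0$, where $\sO_Y(-F_Y)$ is the saturation of $\det\sF$ and $\mathcal Q$ is torsion free, one gets that $m\pi^*L-F_Y$ is linearly equivalent to an effective divisor for infinitely many $m$. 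Since $\bigwedge^{r'}(\Omega^1_Y)^{\otimes q}$ is a torsion-free quotient of $(\Omega^1_Y)^{\otimes r'q}$ and its determinant is an integer multiple of $K_Y$, Theorem~\ref{thm:CP11}, applied on a log resolution on which $K_Y$ together with its reduced exceptional divisor is pseudoeffective (this is where $K_X\sim_\bQ0$ enters), forces $c_1(\mathcal Q)=c_1\big(\bigwedge^{r'}(\Omega^1_Y)^{\otimes q}\big)+F_Y$ to be pseudoeffective; pushing forward to $X$ and using $K_X\sim_\bQ0$, the divisor $F:=\pi_*F_Y$ is pseudoeffective, and pushing the effectivity statements down produces integral Weil divisors $N_m\ge0$ with $N_m+F\sim_\bQ mL$ for infinitely many $m$. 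This is precisely the input to Theorem~\ref{thm:MMPtwistCY}, which yields $\kappa(X,L)=\max_m\kappa(X,N_m)\ge0$.

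The remaining step, which I expect to be the main obstacle, is to upgrade ``$\kappa(X,L)\ge0$'' to ``$L$ is semiample'', and here the excess of sections over the rank (the ``$+1$'') must be used in an essential way, since nonvanishing alone is consistent with $L$ not being semiample. The idea is that, because $h^0$ exceeds $\rk\sF$ for infinitely many twists, a finer bookkeeping lets one control the numerical dimension of the pseudoeffective part $F$ and choose the $N_m$ so that the Minimal Model Program with scaling run on a pair $(X,\varepsilon N_k)$ — legitimate since $K_X\sim_\bQ0$ and good models for klt pairs in dimension $\le n-1$ are assumed — terminates on a birational model $X'$ with a fibration $f\colon X'\to Z$ onto a variety of strictly smaller dimension along which the strict transform of $L$ is $f$-trivial. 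Restricting to a general fibre, invoking good models in dimension $\le n-1$ there, and descending by the canonical bundle formula then makes $L$ semiample on $X$, contradicting the hypothesis; the delicate sub-points are (a) making the construction of $F$ and $N_m$ sufficiently uniform in $m$ to feed the scaling MMP, (b) tracking the exceptional divisors of $\pi$ so that semiampleness descends from $Y$ to $X$, and (c) closing the induction on $\dim X$. The ``in particular'' for $\Omega^q_X$ follows by running the same argument with $\bigwedge^q\Omega^1_Y$ in place of $(\Omega^1_Y)^{\otimes q}$: it is again a torsion-free quotient of $(\Omega^1_Y)^{\otimes q}$ so Theorem~\ref{thm:CP11} applies to its quotients, it has rank $\binom{n}{q}$, and the natural map from the (reflexive) pullback of $\Omega^q_X$ to $\Omega^q_Y$ is generically an isomorphism, so the bound transfers.
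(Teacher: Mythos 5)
Your first half is sound and follows the template of Proposition~\ref{prop:van} and Theorem~\ref{thm:CYn-1}: extract a saturated subsheaf from the twisted inclusions, take determinants, apply Theorem~\ref{thm:CP11} on a log resolution (with the reduced exceptional divisor, since $K_Y$ itself need not be pseudoeffective), and feed $N_m+F\sim_\bQ mL$ into Theorem~\ref{thm:MMPtwistCY}. But notice that this chain of steps uses only \emph{one} section per twist, i.e.\ only $h^0\geq 1$, and therefore can never output more than $\kappa(X,L)\geq 0$. That is where the genuine gap sits: under the stated hypotheses, ``$\kappa(X,L)\geq 0$ and $L$ nef imply $L$ semiample'' is essentially the full abundance statement in dimension $n$ (take $D\in|mL|$ and the klt minimal pair $(X,\varepsilon D)$); the assumption of good models in dimension $\leq n-1$ does \emph{not} give this when $\kappa(X,L)=0$, since then the MMP with scaling on $(X,\varepsilon D)$ does nothing ($K_X+\varepsilon D$ is already nef) and produces no fibration to a lower-dimensional base to induct on. So your endgame, as described, cannot close.

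The missing idea is precisely how the excess ``$+1$'' over the rank is spent: $r+1$ linearly independent sections of a sheaf of rank $r$ force the evaluation map $H^0\otimes\sO_Y\to(\Omega^1_Y)^{\otimes q}\otimes\sO_Y(m\pi^*L)$ to have a kernel of positive rank inside a trivial bundle, equivalently the induced ``kernel map'' to a Grassmannian is non-constant (a constant map would mean a nonzero subspace of sections vanishing generically, hence vanishing). This produces a divisor class with \emph{two} independent sections --- a pencil --- sitting inside the decomposition $N_m+F\sim_\bQ mL$, so that $\kappa(X,N_m)\geq 1$ for the relevant $m$, and Theorem~\ref{thm:MMPtwistCY} then upgrades the conclusion to $\kappa(X,L)\geq 1$. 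Only at that point does the inductive hypothesis bite: for a nef klt pair $K_X+\varepsilon D\sim_\bQ\varepsilon mL$ with $\kappa\geq 1$, semiampleness follows from good models in dimension $\leq n-1$ via the Iitaka fibration and the canonical bundle formula, giving the desired contradiction with ``$L$ not semiample.'' Your proposal correctly flags that the excess of sections must be used, but replaces this pencil argument with an appeal to ``finer bookkeeping'' of the numerical dimension of $F$, which does not supply the needed jump from $\kappa\geq 0$ to $\kappa\geq 1$. (Two smaller points: the wedge of $r'$ inclusions gives effectivity of $r'm\pi^*L-F_Y$, not $m\pi^*L-F_Y$; and Theorem~\ref{thm:MMPtwistCY} needs a single $F$ for all $m\in\mathcal S$, so the subsheaf must be generated by the inclusions for \emph{all} $m$ at once, as in Proposition~\ref{prop:van}. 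Your derivation of the ``in particular'' statement from the case of $\Omega^q_Y=\bigwedge^q\Omega^1_Y$, of rank $\binom{n}{q}$, is fine.)
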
 

In order to exploit Theorem \ref{thm:CYn-1}, using Theorem \ref{thm:DPS} and Riemann-Roch similarly as in the proof of Theorem \ref{thm:B}, one shows:

\begin{cor}\label{cor:nef}
Assume the existence of good models for klt pairs in dimensions at most $n-1$. Let $X$ be a projective klt variety of dimension $n$ such that $K_X\sim_\Q0$, and let $L$ be a nef divisor on $X$. 
\begin{enumerate}
\item[(i)] Assume that $\sO_X(L)$ has a singular hermitian metric with semipositive curvature current and with algebraic singularities. If $\chi(X,\sO_X)\neq0$, then $\kappa(X,L)\geq0$.
\item[(ii)] If $\sO_X(L)$ is hermitian semipositive and if $\chi(X,\sO_X)\neq0$, then $L$ is semiample.
\end{enumerate} 
\end{cor}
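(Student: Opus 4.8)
The plan is to deduce both parts of Corollary~\ref{cor:nef} from the nonvanishing input Theorem~\ref{thm:CYn-1} together with the Hard Lefschetz surjectivity of Theorem~\ref{thm:DPS}, mimicking the argument already sketched for Theorem~\ref{thm:B}. First I would reduce to the case where $X$ is smooth: passing to a resolution $\pi\colon Y\to X$, we have $H^1(Y,\sO_Y)=0$ (klt singularities are rational), $\chi(Y,\sO_Y)=\chi(X,\sO_X)\neq0$, and $\pi^*L$ is a nef divisor. If I can prove $\kappa(Y,\pi^*L)\geq0$ in case (i), this descends to $\kappa(X,L)\geq0$ since $L=\pi_*\pi^*L$ up to the exceptional locus; and in case (ii), hermitian semipositivity of $\sO_X(L)$ gives a smooth semipositive metric on $\pi^*\sO_X(L)$, so once we know $\kappa(X,L)\geq0$, semiampleness follows from the results of Gongyo--Matsumura~\cite{GM14} as noted after Theorem~\ref{thm:B}. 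Thus in both cases the heart of the matter is \emph{nonvanishing} on the smooth model.

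For the nonvanishing, argue by contradiction: assume $\kappa(Y,\pi^*L)=-\infty$. By Theorem~\ref{thm:CYn-1} applied to $X$ (with its resolution $Y$), we get $H^0\big(Y,\Omega^q_Y\otimes\sO_Y(m\pi^*L)\big)=0$ for every $q\geq1$ and all sufficiently divisible $m\neq0$, since $\Omega^q_Y$ is a direct summand of $(\Omega^1_Y)^{\otimes q}$. Now equip $\sO_Y(m\pi^*L)$ with a singular hermitian metric $h$ of semipositive curvature current --- in case (ii) the smooth semipositive metric, in case (i) the pullback of the algebraic-singularities metric, whose multiplier ideal is $\sI(h^m)=\sO_Y(-E_m)$ for suitable effective $E_m$ coming from a log resolution. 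Feeding this into Theorem~\ref{thm:DPS} with $q$ replaced by $n-q$ (here $n=\dim Y$), surjectivity of $\omega^{q}\wedge\bullet\colon H^0\big(Y,\Omega^{n-q}_Y\otimes\sO_Y(m\pi^*L)\otimes\sI(h^m)\big)\to H^q\big(Y,\Omega^n_Y\otimes\sO_Y(m\pi^*L)\otimes\sI(h^m)\big)$, combined with the surjection $H^q(Y,\Omega^n_Y\otimes\sO_Y(m\pi^*L)\otimes\sI(h^m))\to H^q(Y,\Omega^n_Y\otimes\sO_Y(m\pi^*L))$ (when the metric is smooth, $\sI(h^m)=\sO_Y$ and there is nothing to do), and the vanishing of the $H^0$ on the left from the previous step, yields $H^q\big(Y,\Omega^n_Y\otimes\sO_Y(m\pi^*L)\big)=0$ for all $q\geq1$. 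Since $H^0(Y,\Omega^n_Y\otimes\sO_Y(m\pi^*L))=H^0(Y,K_Y\otimes\sO_Y(m\pi^*L))$ also vanishes (in case (ii) directly by $\kappa=-\infty$ on $X$ and $\sO_X(K_X)\cong\sO_X$; in case (i) by a comparison of sections on $X$), all cohomology groups vanish and hence $\chi\big(Y,\Omega^n_Y\otimes\sO_Y(m\pi^*L)\big)=0$.

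The final step is the Riemann--Roch polynomial trick: $\chi\big(Y,\Omega^n_Y\otimes\sO_Y(m\pi^*L)\big)$ is a numerical polynomial in $m$, vanishing for infinitely many $m$, hence identically zero; evaluating at $m=0$ gives $\chi(Y,\Omega^n_Y)=\chi(Y,\sO_Y)\cdot(-1)^n$ (by Serre duality $\chi(Y,\Omega^n_Y)=\chi(Y,K_Y)=(-1)^n\chi(Y,\sO_Y)$), so $\chi(X,\sO_X)=\chi(Y,\sO_Y)=0$, contradicting the hypothesis. Therefore $\kappa(Y,\pi^*L)\geq0$, hence $\kappa(X,L)\geq0$, proving (i); and in case (ii), semiampleness follows by Gongyo--Matsumura as above. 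The main obstacle I anticipate is the bookkeeping around the multiplier ideal in case (i): one must check that the metric pulled back to a log resolution has algebraic singularities in the sense required for Theorem~\ref{thm:DPS}, that the relevant $H^0$-vanishing survives tensoring with $\sI(h^m)$ (this is automatic since $\sI(h^m)\subseteq\sO_Y$), and that passing between $X$ and $Y$ does not disturb the Euler characteristics or the Kodaira dimension --- all of which is routine but needs care, especially ensuring that the ``sufficiently divisible $m$'' ranges in Theorems~\ref{thm:CYn-1} and~\ref{thm:DPS} are compatible so that infinitely many common $m$ remain for the polynomial argument.
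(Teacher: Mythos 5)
Your overall strategy --- resolve, use Theorem \ref{thm:CYn-1} to kill $H^0\big(Y,\Omega^p_Y\otimes\sO_Y(m\pi^*L)\big)$ for $p\geq 1$ (and $p=0$ via $\kappa(X,L)=-\infty$), feed this into the Hard Lefschetz Theorem \ref{thm:DPS}, and finish with the Riemann--Roch polynomial trick evaluated at $m=0$ --- is exactly the route the paper intends (it only says ``using Theorem \ref{thm:DPS} and Riemann--Roch similarly as in the proof of Theorem \ref{thm:B}''). Your part (ii), where the metric is smooth and the multiplier ideal is trivial, is correct, and the appeal to Gongyo--Matsumura for the semiampleness upgrade is the intended one.

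In part (i), however, the step ``combined with the surjection $H^q\big(Y,\Omega^n_Y\otimes\sO_Y(m\pi^*L)\otimes\sI(h^m)\big)\to H^q\big(Y,\Omega^n_Y\otimes\sO_Y(m\pi^*L)\big)$'' is a genuine gap: that map is in general not surjective for $0<q<n$. From $0\to\sF\otimes\sI\to\sF\to\sF\otimes\sO_Y/\sI\to 0$ one only gets exactness of $H^q(\sF\otimes\sI)\to H^q(\sF)\to H^q(\sF\otimes\sO_Y/\sI)$, and when the cosupport of $\sI(h^{\otimes m})$ contains a divisor --- which is precisely the situation for a metric with algebraic singularities, where $\sI(h^{\otimes m})=\sO_Y\big({-}\sum\lfloor m\lambda_j\rfloor D_j\big)$ --- the group on the right need not vanish. (Compare the paper's Corollary \ref{cor:HLF}: there the analogous surjection is legitimate only because one first proves $\dim V_m\leq 1$, so that $H^2$ of the quotient sheaf vanishes; no such dimension bound is available here.) The repair is to not pass to the untwisted cohomology at all. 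Because the singularities are algebraic, for $m$ in a suitable arithmetic progression one has $\sI(h^{\otimes m})=\sO_Y(-m\Lambda)$ with $\Lambda=\sum\lambda_j D_j$ a fixed $\Q$-divisor and $m\Lambda$ integral, so $\chi\big(Y,\Omega^n_Y\otimes\sO_Y(m\pi^*L)\otimes\sI(h^{\otimes m})\big)=\chi\big(Y,\Omega^n_Y\otimes\sO_Y(m(\pi^*L-\Lambda))\big)$ is still a numerical polynomial in $m$ along that progression; it vanishes for infinitely many such $m$ by the Hard Lefschetz argument, hence identically, and evaluating at $m=0$ gives $\chi(Y,K_Y)=(-1)^n\chi(Y,\sO_Y)=(-1)^n\chi(X,\sO_X)=0$ directly, with no surjection needed. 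Two smaller points: Theorem \ref{thm:CYn-1} is stated for $\Q$-factorial $X$, so one should first pass to a small $\Q$-factorialization (harmless for $\kappa$, $\chi(X,\sO_X)$ and nefness of the pullback of $L$); and rational singularities give $H^i(Y,\sO_Y)\simeq H^i(X,\sO_X)$, not $H^1(Y,\sO_Y)=0$ --- the latter is neither assumed nor needed.
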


The condition $\chi(X,\sO_X) \ne 0$ is necessary, since the conclusion of the corollary is wrong in case of an abelian variety. There might a version of Corollary \ref{cor:nef} without this assumption, stating that $L$ is numerically equivalent to a semiample divisor. However, some of the methods discussed so far fail if $\chi(X,\sO_X) = 0 $, even for a Calabi-Yau $3$-fold. Note that if $\dim X $ is odd and $K_X\sim_\Q 0$, then $\chi(X,\sO_X) = 0$ by \cite[Corollary 6.11]{GKP11}. The case of a hyperk\"ahler manifold will be discussed in the next section, without assuming the MMP in lower dimensions. 

When $\nu(X,L) = 1$, we can say more:

\begin{thm}
Let $X$ be a projective manifold with $K_X \sim_\Q 0$ and let $\mathcal L$ be a nef line bundle on $X$ with $\nu(X,\mathcal L)=1$. Let $\eta\colon \tilde X \to X$ be a finite \'etale cover such that the Beauville-Bogomolov decomposition is of the form
$$\tilde X \simeq T\times \prod X_j,$$ 
where the $X_j$ are even-dimensional Calabi-Yau manifolds or hyperk\"ahler manifolds, and $T$ is an abelian variety. Then there exists a line bundle $\mathcal L'$ numerically equivalent to $\mathcal L$ such that $\kappa (X,\mathcal L') \geq 0$. 
\end{thm}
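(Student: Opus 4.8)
The plan is to pull $\mathcal L$ back to the Beauville--Bogomolov cover $\eta$, split off the abelian factor, and then run the nonvanishing machinery of Section~\ref{sec:abundance} on a single even-dimensional Calabi--Yau or hyperk\"ahler factor, the point being that such a factor has $\chi(\OO)\neq0$. First I would replace $X$ by $\tilde X=T\times\prod_j X_j$: since $\eta$ is finite \'etale, $\eta^*\mathcal L$ is nef with $\nu(\tilde X,\eta^*\mathcal L)=\nu(X,\mathcal L)=1$, and $\eta_*\eta^*=\deg\eta$, so all numerical questions can be read off on $\tilde X$, the passage back to $X$ being a descent along the free $G$-action, $G=\mathrm{Gal}(\tilde X/X)$, dealt with at the end. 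As each $X_j$ is simply connected, $\Pic^0(X_j)=0$ and $\mathrm{Alb}(X_j)=0$, so the K\"unneth formula gives $\Pic(\tilde X)=\Pic(T)\times\prod_j\Pic(X_j)$; write $\eta^*\mathcal L\cong p_T^*A\otimes\bigotimes_j p_j^*B_j$ with $A$, $B_j$ nef. Numerical dimension is additive over products of nef classes (again by K\"unneth), so $1=\nu(T,A)+\sum_j\nu(X_j,B_j)$, hence exactly one summand is $1$ and the rest vanish; a nef line bundle of numerical dimension $0$ is numerically trivial, and since $\Pic$ of a simply connected manifold is torsion free, all but one of the $B_j$ is $\OO_{X_j}$.

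If the numerical dimension sits on the abelian factor, then $[\mathcal L]$ lies in the image of $\mathrm{alb}_X^*\colon N^1(\mathrm{Alb}(X))\to N^1(X)$, and I would finish by the classical theory of nef line bundles on abelian varieties (cf.\ \cite{BL04}): $A$ is numerically equivalent to the pullback of an ample line bundle along an elliptic quotient of $T$, and transporting this through the Albanese map of $X$ gives $\mathcal L'\equiv\mathcal L$ with $\kappa(X,\mathcal L')=1$. So assume the numerical dimension sits on a factor $X_{j_0}$, which is an even-dimensional Calabi--Yau or hyperk\"ahler manifold, and write $\eta^*\mathcal L\cong p_T^*A\otimes p_{j_0}^*B_{j_0}$ with $A\in\Pic^0(T)$ and $B_{j_0}$ nef, $\nu(X_{j_0},B_{j_0})=1$. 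For a hyperk\"ahler $X_{j_0}$ the Fujiki relation forces $\nu=1$ to occur only in dimension two, i.e.\ for a $K3$ surface, where the claim is immediate from Riemann--Roch and the Hodge index theorem; the new content is thus an even-dimensional Calabi--Yau $X_{j_0}$ of dimension $\geq4$. The decisive structural fact is that $\chi(X_{j_0},\OO_{X_{j_0}})\neq0$ — an even-dimensional Calabi--Yau manifold has $\chi(\OO)=2$, a $2n$-dimensional hyperk\"ahler manifold has $\chi(\OO)=n+1$ — which is exactly why the statement excludes odd-dimensional Calabi--Yau factors, for which $\chi(\OO)=0$.

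Since $X_{j_0}$ is simply connected there is no numerical freedom there, so I must show $\kappa(X_{j_0},B_{j_0})\geq0$. Suppose not. Following the proof of Proposition~\ref{prop:van}: from infinitely many inclusions $\OO_{X_{j_0}}(-mB_{j_0})\hookrightarrow\Omega^q_{X_{j_0}}$ one forms the saturated image, and using that the determinant of a torsion free quotient of an exterior power of $\Omega^1$ is pseudoeffective (Theorem~\ref{thm:CP11}) together with $K_{X_{j_0}}\sim0$, one gets a pseudoeffective $F$ with $N_m+F\sim mB_{j_0}$, $N_m\geq0$, for infinitely many $m$. In the case $\nu=1$ this already forces $\kappa(X_{j_0},B_{j_0})\geq0$ \emph{without} the Minimal Model Program: restricting to a general complete intersection surface $S\subseteq X_{j_0}$ one has $(B_{j_0}|_S)^2=0$, and since the restriction of the positive part $P_\sigma(F)$ of Nakayama's divisorial Zariski decomposition (see \cite{Nak04}) is nef on $S$ and orthogonal to $B_{j_0}|_S$, the Hodge index theorem on $S$ makes it proportional to $B_{j_0}|_S$, which by Lefschetz injectivity $N^1(X_{j_0})\hookrightarrow H^2(S)$ gives $P_\sigma(F)\equiv cB_{j_0}$ on $X_{j_0}$; then $(m-c)B_{j_0}$ is numerically, hence (by torsion-freeness) linearly, equivalent to the effective divisor $N_m+N_\sigma(F)$ — a contradiction. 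Hence $H^0(X_{j_0},\Omega^q_{X_{j_0}}\otimes\OO(mB_{j_0}))=0$ for all $q$ and $m\gg0$. Equipping $\OO(mB_{j_0})$ with a singular metric $h$ of semipositive curvature current and checking by the same Hodge-index argument that the associated multiplier ideal subspace cannot contain a divisor $D$ with $mB_{j_0}-D$ pseudoeffective, the Hard Lefschetz Theorem~\ref{thm:DPS} then yields $H^q(X_{j_0},\OO(mB_{j_0}))=0$ for all $q$ and $m\gg0$; so $\chi(X_{j_0},\OO(mB_{j_0}))$, a numerical polynomial in $m$, vanishes for $m\gg0$ hence identically, giving $\chi(X_{j_0},\OO_{X_{j_0}})=0$, a contradiction. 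To descend: $X_{j_0}$ is $G$-stable (the unique factor on which $\eta^*\mathcal L$ is numerically nontrivial), each $g\in G$ acts on it fixing $[B_{j_0}]$ hence $B_{j_0}$ itself, and $A$ is $G$-invariant; as $\Pic^0(X)\to\Pic^0(T)^G$ is a surjection of abelian varieties there is $\mathcal P\in\Pic^0(X)$ with $\eta^*\mathcal P\cong p_T^*A^{-1}$, so $\mathcal L':=\mathcal L\otimes\mathcal P\equiv\mathcal L$ satisfies $\eta^*\mathcal L'\cong p_{j_0}^*B_{j_0}$ and $H^0(X,m\mathcal L')=H^0(X_{j_0},mB_{j_0})^{G_0}$ for the finite image $G_0$ of $G$ in $\Aut(X_{j_0})$; taking $\prod_{g\in G_0}g\cdot s$ for $0\neq s\in H^0(X_{j_0},m_0B_{j_0})$ produces a nonzero $G_0$-invariant section, so $\kappa(X,\mathcal L')\geq0$.

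The main obstacle I expect is the $\nu=1$ analysis on $X_{j_0}$ in arbitrary even dimension: both the implication ``$N_m+F\sim mB_{j_0}\Rightarrow\kappa\geq0$'' and the control of the multiplier ideal subspaces have to be run by Hodge-index and Zariski-decomposition arguments on general surface sections, with Lefschetz injectivity transporting the conclusions back to $X_{j_0}$, precisely so as to avoid invoking abundance in lower dimensions; checking that this works uniformly in the dimension (as opposed to dimension three, where it is carried out in \cite{LOP16}) is the heart of the matter. A secondary, purely bookkeeping, difficulty is keeping the equivariant descent compatible with matching $[\mathcal L']=[\mathcal L]$ exactly, rather than only up to a multiple.
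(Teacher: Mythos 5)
Your global architecture is the right one, and it is essentially the one behind the cited result [LP16, Theorem 8.9]: pass to $\tilde X=T\times\prod X_j$, use simple connectedness of the factors to split $\eta^*\mathcal L$ as an exterior product of nef bundles, use additivity of $\nu$ to isolate the unique factor carrying $\nu=1$, dispose of the abelian factor by the classical theory, of a hyperk\"ahler factor by the Fujiki/Verbitsky relation (forcing a K3), and attack an even-dimensional Calabi--Yau factor via $\chi(\OO)\neq0$, the forms-to-sections machinery in its $\nu=1$ (MMP-free) incarnation, and the Hard Lefschetz Theorem \ref{thm:DPS}. The descent back to $X$ and the matching of $[\mathcal L']=[\mathcal L]$ are also handled correctly in outline. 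Note that the survey itself gives no proof (it only points to \cite{LP16}), so the comparison is with the method there and with the threefold template of Corollary \ref{cor:HLF}.

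There is, however, a genuine gap exactly at the step you describe as ``the Hard Lefschetz Theorem then yields $H^q(X_{j_0},\OO(mB_{j_0}))=0$ for all $q$''. Theorem \ref{thm:DPS} only controls the groups $H^q\big(X_{j_0},\OO(mB_{j_0})\otimes\mathcal I(h^m)\big)$; to pass from their vanishing to the vanishing of $H^q\big(X_{j_0},\OO(mB_{j_0})\big)$ you need $H^q\big(V_m,\OO_{V_m}(mB_{j_0})\big)=0$, where $V_m$ is the subspace cut out by $\mathcal I(h^m)$. The Hodge-index argument you invoke only excludes divisorial components of $V_m$, i.e.\ gives $\dim V_m\leq\dim X_{j_0}-2$, which kills $H^q$ only for $q\geq\dim X_{j_0}-1$. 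In dimension $3$ this is exactly enough ($H^2$ and $H^3$ are handled this way, $H^0$ by hypothesis, and $H^1$ is recovered from $\chi(mL)=0$, which holds there because $L\cdot c_2=0$); but on an even-dimensional Calabi--Yau factor of dimension $2k\geq4$ the middle groups $H^q$ for $1\leq q\leq 2k-2$ are left uncontrolled, and the Euler-characteristic trick cannot recover them since Riemann--Roch (with $B_{j_0}^2\equiv0$ and the vanishing of the odd Todd classes) gives $\chi(X_{j_0},mB_{j_0})=\chi(\OO_{X_{j_0}})=2$ identically, so there is no analogue of the threefold relation $\chi(mL)=0$ to close the gap. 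This is precisely the point where the three-dimensional argument does not generalise verbatim and where the substance of [LP16, Theorem 8.9] lies; your proposal acknowledges the $\nu=1$ analysis as the heart of the matter but does not supply the missing control. A secondary imprecision: in your Hodge-index step the negative part $N_\sigma(F)$ is only an effective $\R$-divisor, so ``numerically, hence linearly, equivalent'' does not parse, and in any case numerical equivalence of $(m-c)B_{j_0}$ to an effective $\R$-divisor only gives pseudoeffectivity (which $B_{j_0}$ already has), not $\kappa\geq0$; the correct deduction requires the more careful integrality argument of \cite{LP16}.
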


For the proof we refer to \cite[Theorem 8.9]{LP16}. Observe that here we do not have any inductive assumptions.

\subsection{Abundance conjecture: hyperk\"ahler manifolds}
 
In this section we consider line bundles on compact hyperk\"ahler manifolds, often called irreducible sympectic manifolds. For basics on hyperk\"ahler manifolds we refer to \cite{Hu03}. 

Let $X$ be a compact hyperk\"ahler manifold of dimension $2n$ and let $q$ be the Beauville-Bogomolov-Fujiki form on $X$. Let $L$ be a nef line bundle on $X$. By Fujiki's formula we know that 
$$ c_1(L)^{2n} = \lambda q(L,L),$$
where $\lambda$ is a positive constant. Therefore, $L$ is big if and only if $q(L,L)>0$. If $L$ is big, then $X$ is projective, and an argument similar to that of Proposition \ref{pro:semiample} which invokes the basepoint free theorem instead of the results of \cite{KMM94}, shows that $L$ is semiample. Therefore, when considering abundance on hyperk\"ahler manifolds (projective or not), the only interesting case is when $q(L,L) = 0$. Such line bundles $L$ are often called \emph{parabolic}. 

A form of the Abundance Conjecture \ref{conj:Ab1} for hyperk\"ahler manifolds is now:

\begin{con} \label{con:SYZ} 
Any nef parabolic line bundle on a compact hyperk\"ahler manifold is semiample. 
\end{con}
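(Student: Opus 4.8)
Here is a possible strategy; write $\dim X=2n$. By the Fujiki relations one has $\nu(X,L)=n$ for a nonzero nef parabolic line bundle $L$ (indeed $L^{n}\not\equiv0$ since $L$ pairs positively with a K\"ahler class under the Beauville--Bogomolov--Fujiki form, while $L^{n+1}\equiv0$ because $q(L,L)=0$). If $X$ is not projective, then $q$ is negative semidefinite on $N^1(X)$ and $L$ is an isotropic class on the boundary of the nef cone; using the surjectivity of the period map, the global Torelli theorem and the boundedness of MBM classes (Amerik--Verbitsky), one deforms $(X,L)$ to a pair $(X',L')$ with $X'$ projective and $L'$ nef parabolic, and since Lagrangian fibrations deform in families (Matsushita) the semiampleness of $L'$ transports back to $L$. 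So we may assume $X$ projective; in the deformation types of $K3^{[n]}$ and of generalised Kummer varieties Conjecture \ref{con:SYZ} is then already known (Bayer--Macr\`i, Markman, Matsushita), so the real issue is the general projective case. As in Subsection \ref{subsection:4.1}, split the problem into nonvanishing and semiampleness.

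\emph{Nonvanishing.} Suppose $\kappa(X,L)=-\infty$. Assuming the existence of good models for klt pairs in dimensions at most $2n-1$, Theorem \ref{thm:CYn-1} applied with $Y=X$ (legitimate, as $X$ is smooth, $\bQ$-factorial and klt with $K_X\sim0$) gives
$$H^{0}\big(X,(\Omega^{1}_X)^{\otimes p}\otimes\sO_X(mL)\big)=0$$
for all $p\geq1$ and all sufficiently divisible $m\neq0$; in particular $H^{0}(X,\Omega^{q}_X\otimes\sO_X(mL))=0$ for every $q$. Fix such an $m$ and a singular hermitian metric $h$ on $\sO_X(mL)$ of minimal singularities, which has semipositive curvature current because $L$ is nef. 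The Hard Lefschetz Theorem \ref{thm:DPS} then exhibits, for each $q$, the group $H^{q}\big(X,\sO_X(mL)\otimes\sI(h)\big)$ as a quotient of $H^{0}\big(X,\Omega^{2n-q}_X\otimes\sO_X(mL)\otimes\sI(h)\big)\subseteq H^{0}\big(X,\Omega^{2n-q}_X\otimes\sO_X(mL)\big)=0$. Provided the multiplier ideal subspace $V_m\subseteq X$ attached to $h$ can be controlled — as in the threefold case, where one proves $\dim V_m\leq1$ — the maps $H^{q}(X,\sO_X(mL)\otimes\sI(h))\to H^{q}(X,\sO_X(mL))$ are surjective, whence $H^{q}(X,\sO_X(mL))=0$ for every $q$ and all large divisible $m$. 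Thus $\chi(X,\sO_X(mL))=0$ for infinitely many $m$; this Euler characteristic being a polynomial in $m$, it vanishes identically, so $\chi(X,\sO_X)=0$, contradicting $\chi(X,\sO_X)=n+1$. Hence $\kappa(X,L)\geq0$.

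\emph{Semiampleness.} Fix $m$ and $D\in|mL|$, and choose a small rational $\varepsilon>0$ so that $(X,\varepsilon D)$ is klt; since $K_X\sim_{\bQ}0$ we have $K_X+\varepsilon D\equiv\varepsilon m\,L$, which is nef. If one grants abundance (the existence of good models) in dimension $2n$ as well, then $K_X+\varepsilon D$ is semiample, hence so is $L$. To avoid assuming abundance in dimension $2n$, one analyses the Iitaka fibration of $L$ directly: using $\nu(X,L)=n$, Matsushita's structure theorem for fibrations of hyperk\"ahler manifolds and the base point free theorem, one aims to promote this fibration to a genuine morphism $f\colon X\to B$ with $\dim B=n$ and $L\sim_{\bQ}f^{*}A$ for some ample $\bQ$-divisor $A$ on $B$, after which $L$ is automatically semiample and $f$ is the expected Lagrangian fibration.

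\emph{Main obstacle.} The hardest step is precisely this passage from $\kappa(X,L)\geq0$ to semiampleness without circularly invoking abundance in dimension $2n$: one must exclude intermediate Iitaka dimensions $0<\kappa(X,L)<n$ (a form of the $\kappa=\nu$ problem) and produce the fibration structure, whereas Matsushita's results govern honest morphisms rather than Iitaka maps from resolutions. Secondary difficulties are the reliance on good models in dimension $2n-1$ in the nonvanishing step, the analytic control of the multiplier ideal subspaces $V_m$ required by the Hard Lefschetz argument, and — in the non-projective case — arranging the deformation to a projective hyperk\"ahler manifold so that $L$ stays nef. In the $K3^{[n]}$ and Kummer deformation types all of these are handled via Bridgeland stability and Torelli-type descriptions of the movable cone; in general the conjecture remains open because the full list of deformation types of hyperk\"ahler manifolds, and the wall structure of their movable cones, are not yet understood.
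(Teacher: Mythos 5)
The statement you are trying to prove is Conjecture \ref{con:SYZ}, a form of the SYZ conjecture; the paper does not prove it and it is open. Your text is accordingly not a proof but a strategy outline, and to your credit you say so. What you propose essentially reassembles the partial results the paper itself records: the nonvanishing step is the specialisation of Corollary \ref{cor:nef} (via Theorem \ref{thm:CYn-1} and the Hard Lefschetz Theorem \ref{thm:DPS}) to hyperk\"ahler manifolds, using $\chi(X,\sO_X)=n+1\neq0$, and the semiampleness step is the basepoint-free/klt-pair argument of Proposition \ref{pro:semiample}. Your preliminary observation that $\nu(X,L)=n$ for a nonzero nef parabolic $L$ is correct and standard via the polarised Fujiki relations.

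The gaps, however, are real and you should not present the outline as close to a proof. First, the nonvanishing step is doubly conditional: it assumes good models for klt pairs in dimension $2n-1$, and, more seriously, the Hard Lefschetz argument only controls $H^q\big(X,\sO_X(mL)\otimes\sI(h)\big)$; passing to $H^q\big(X,\sO_X(mL)\big)$ requires controlling the cosupport $V_m$ of the multiplier ideal. In dimension $3$ this is done with the Hodge index theorem on a hyperplane section (\cite[Lemmas 3.2, 3.7]{LOP16}), and no analogue is known in higher dimension; moreover a general nef $L$ need not carry a metric with \emph{algebraic} singularities, which is what Corollary \ref{cor:nef}(i) actually requires, so ``minimal singularities'' does not suffice as written. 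Second, your semiampleness step either invokes abundance in dimension $2n$ (circular with what you are trying to avoid) or requires solving $\kappa=\nu$ for $L$ and upgrading the Iitaka map to a genuine Lagrangian fibration, to which Matsushita's theorems do not directly apply. Third, the reduction of the non-projective case by deformation is asserted, not argued: one must produce a deformation preserving nefness of $L$ and then transport semiampleness back along the family, neither of which is known in general. So the proposal is a fair survey of the state of the art, but each of the three steps contains an open problem.
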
 

The conjecture would imply that a non-trivial nef parabolic line bundle $L$ on a compact hyperk\"ahler manifold $X$ defines a holomorphic surjective map with connected fibers
$$\varphi\colon X \to B$$
to a normal projective variety $B$ such that $0 < \dim B <  \dim X$. Then the results of Matsushita \cite{Mat99,Mat01} show that $\dim B = n$, and in particular, we have $\kappa(X,L)=\nu(X,L) = n$. Moreover, $\varphi$ is a Lagrangian fibration and all smooth fibers are tori. Hence, Conjecture \ref{con:SYZ} allows the following more precise form:

\begin{con} \label{con:para}
Let $L$ be a non-trivial nef parabolic line bundle on a compact hyperk\"ahler manifold $X$ of dimension $2n$. Then $L$ is semiample and $\kappa(X,L) = n$. 
\end{con}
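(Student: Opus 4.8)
The plan is to follow the template developed for Calabi--Yau varieties in \cite{LP16,LOP16}, splitting Conjecture~\ref{con:para} into three parts: (a) the nonvanishing $\kappa(X,L)\geq0$; (b) semiampleness of $L$; (c) the numerical identification $\kappa(X,L)=n$. Part (c) is formal once (b) is available: a non-trivial semiample $L$ with $q(L,L)=0$ defines through $|mL|$, for $m$ sufficiently divisible, a surjective morphism with connected fibres $\varphi\colon X\to B$ with $\dim B=\kappa(X,L)$; non-triviality of $L$ forces $\dim B\geq1$, while Fujiki's formula $L^{2n}=\lambda\,q(L,L)=0$ shows $L$ is not big, so $\dim B<2n$, and then Matsushita's theorems \cite{Mat99,Mat01} pin down $\dim B=n$. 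Part (b) is obtained from part (a) as in the proof of Proposition~\ref{pro:semiample}, replacing the appeal to \cite{KMM94} by the (conjectural) existence of good models for klt pairs in dimension $2n$: for an effective $D\in|mL|$ and small rational $\varepsilon>0$ the pair $(X,\varepsilon D)$ is klt and $\bQ$-factorial, $K_X+\varepsilon D\sim_\bQ\varepsilon D$ is nef, hence semiample, and since $K_X\sim0$ so is $L$.

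The real content is part (a), which I would prove by contradiction, assuming $\kappa(X,L)=-\infty$. Since a hyperk\"ahler manifold is smooth, Theorem~\ref{thm:CYn-1} (which needs good models for klt pairs in dimensions $\leq 2n-1$) applies with the resolution taken to be the identity and gives
$$H^0\big(X,(\Omega^1_X)^{\otimes p}\otimes\OO_X(mL)\big)=0\quad\text{for all }p\geq1\text{ and all sufficiently divisible }m\neq0,$$
hence $H^0\big(X,\Omega^q_X\otimes\OO_X(mL)\big)=0$ for every $q$, as $\Omega^q_X$ is a direct summand of $(\Omega^1_X)^{\otimes q}$ in characteristic zero. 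Fixing such an $m$, equip $\OO_X(mL)$ with a singular hermitian metric $h$ of semipositive curvature current and let $V_m\subseteq X$ be the subspace cut out by $\sI(h)$; by Fujiki's formula and the Hodge index theorem in the parabolic regime $q(L,L)=0$, as in \cite[Lemmas 3.2 and 3.7]{LOP16}, no divisor $S\subseteq V_m$ can have $mL-S$ pseudoeffective, and one tries to bound $\dim V_m$ as sharply as possible. The Hard Lefschetz Theorem~\ref{thm:DPS}, applied with $\mathcal L=\OO_X(mL)$ and using $\Omega^{2n}_X\cong\OO_X$, produces surjections
$$H^0\big(X,\Omega^{2n-q}_X\otimes\OO_X(mL)\otimes\sI(h)\big)\twoheadrightarrow H^q\big(X,\OO_X(mL)\otimes\sI(h)\big)$$
for all $q$, with vanishing sources; combined with the sequence $0\to\sI(h)\otimes\OO_X(mL)\to\OO_X(mL)\to\OO_{V_m}(mL)\to0$ this forces $H^q(X,\OO_X(mL))=0$ for all $q>\dim V_m$. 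Together with $H^0(X,\OO_X(mL))=0$ (from $\kappa=-\infty$) and $H^{2n}(X,\OO_X(mL))=0$ (Serre duality, as $-mL$ is not pseudoeffective), one wants to deduce $\chi(X,\OO_X(mL))=0$. This would contradict the fact that on a hyperk\"ahler $2n$-fold $\chi(X,\OO_X(mL))$ is a function of $q(mL,mL)=m^2q(L,L)=0$ alone, hence equals $\chi(X,\OO_X)=n+1\neq0$.

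The principal obstacle is exactly the passage to $\chi(X,\OO_X(mL))=0$: Hard Lefschetz only annihilates cohomology in degrees above $\dim V_m$, and in the Calabi--Yau threefold case (Corollary~\ref{cor:HLF}) one could afford to control a single degree because the relation $\chi=0$ then recovered the remaining group. In dimension $2n$ one instead needs $\dim V_m$ to be very small, ideally finite, which requires genuinely new input --- presumably a refined analysis of the asymptotic multiplier ideals of a parabolic nef class, or of the (Lagrangian) geometry one is ultimately trying to establish. If $\OO_X(L)$ is assumed \emph{hermitian semipositive}, then $\sI(h)=\OO_X$, $V_m=\emptyset$, and the argument closes at once; this is the natural first case to settle. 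The second, and with present techniques unavoidable, gap is the use of abundance for klt pairs in dimension $2n$ in part (b), which is known only for $2n\leq3$ and is plausibly removable when $\nu(X,L)=1$ in the spirit of the nef-$\nu=1$ results discussed earlier.
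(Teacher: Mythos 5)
The statement you are trying to prove is a conjecture: the paper does not prove it, and it is open. What the paper does is record the known partial results (Verbitsky's nonvanishing for hermitian semipositive parabolic bundles, and the conditional statement derived from Corollary~\ref{cor:nef} for projective hyperk\"ahler manifolds assuming good models in dimensions $\leq 2n-1$), together with Matsushita's theorems which justify the refinement $\kappa(X,L)=n$. Your proposal is an accurate reconstruction of exactly this circle of ideas --- part (c) via Matsushita, part (a) via Theorem~\ref{thm:CYn-1} plus Hard Lefschetz plus the fact that $\chi(X,\OO_X(mL))$ depends only on $q(mL,mL)=0$ and hence equals $n+1$ --- but it is not a proof, and you correctly flag the two places where it cannot be closed with current technology.

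To make the gaps explicit: (i) in part (a), for a general nef parabolic $L$ you have no control over the multiplier ideal $\sI(h)$ of a metric with minimal singularities, so the Hard Lefschetz surjections only kill $H^q(X,\OO_X(mL)\otimes\sI(h))$, not $H^q(X,\OO_X(mL))$, and the contradiction $\chi(X,\OO_X(mL))=0\neq n+1$ is only reached under the additional hypothesis that $\sO_X(L)$ is hermitian semipositive (or has algebraic singularities, with extra work); this is precisely why the paper's corresponding statements carry that hypothesis. (ii) In part (b), passing from $\kappa(X,L)\geq0$ to semiampleness via the pair $(X,\varepsilon D)$ requires abundance for klt pairs in dimension $2n$, since $L$ is parabolic and hence not big, so the basepoint-free theorem does not apply; this is open for $2n\geq4$. (iii) Theorem~\ref{thm:CYn-1} and the whole MMP-with-scaling machinery behind it are stated for \emph{projective} klt varieties, whereas Conjecture~\ref{con:para} concerns arbitrary compact hyperk\"ahler manifolds; in the non-projective case the paper's substitute is the algebraic-dimension analysis of \cite{COP10}, which your proposal does not address. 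So your writeup is a sound survey of the state of the art and of the obstacles, but it should be presented as such, not as a proof.
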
 

The first progress towards Conjecture \ref{con:SYZ} is the following important result of Verbitsky \cite{Ver10}.

\begin{thm}
Let $X$ be a projective hyperk\"ahler manifold and let $L$ be a parabolic line bundle on $X$ which is hermitian semipositive. Then $\kappa (X,L) \geq 0$. 
\end{thm}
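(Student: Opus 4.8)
The plan is a contradiction argument: reduce first to the vanishing of all higher cohomology of $\mathcal O_X(mL)$, via Riemann--Roch and the Hard Lefschetz Theorem \ref{thm:DPS}, then to the vanishing of the twisted forms $H^0(X,\Omega^p_X\otimes\mathcal O_X(mL))$ exactly as in Proposition \ref{prop:van}, and finally to exclude the resulting effective decomposition by the cone geometry of $X$. Write $d=\dim X=2n$ and assume $\kappa(X,L)=-\infty$, so $h^0(X,\mathcal O_X(mL))=0$ for all $m>0$. We may assume $L\not\equiv0$: otherwise $L$ is torsion, since $H^1(X,\mathcal O_X)=0$ and $H^2(X,\mathbb Z)$ is torsion free, and then $\kappa(X,L)=0$. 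Then no $-mL$ is effective, so $h^d(X,\mathcal O_X(mL))=h^0(X,\mathcal O_X(-mL))=0$ by Serre duality and $K_X\cong\mathcal O_X$. The numerical input is Riemann--Roch on hyperk\"ahler manifolds: since $h^{0,q}(X)=1$ for $q$ even and $0$ for $q$ odd we have $\chi(X,\mathcal O_X)=n+1$, and $\chi(X,\mathcal O_X(mL))$ is a fixed polynomial of degree $n$ in $q(mL,mL)=m^2q(L,L)$ with constant term $\chi(X,\mathcal O_X)$; parabolicity $q(L,L)=0$ therefore forces $\chi(X,\mathcal O_X(mL))=n+1\neq0$ for every $m$.

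It thus suffices to prove that $h^q(X,\mathcal O_X(mL))=0$ for all $q$ and all $m\gg0$. Here the hermitian semipositivity of $L$ enters in an essential way: the induced metric on $\mathcal O_X(mL)$ is smooth with semipositive curvature, so its multiplier ideal is $\mathcal O_X$, and Theorem \ref{thm:DPS} applied with $\mathcal L=\mathcal O_X(mL)$ and $\Omega^d_X\cong\mathcal O_X$ yields surjections $H^0(X,\Omega^{d-q}_X\otimes\mathcal O_X(mL))\twoheadrightarrow H^q(X,\mathcal O_X(mL))$ for all $q$. Hence it is enough to show
$$h^0\big(X,\Omega^p_X\otimes\mathcal O_X(mL)\big)=0\qquad\text{for all }0\le p\le d\text{ and all }m\gg0,$$
which via the displayed surjections gives $\chi(X,\mathcal O_X(mL))=0$ for $m\gg0$, contradicting the previous paragraph.

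The cases $p\in\{0,d\}$ are immediate from $\kappa(X,L)=-\infty$. For $1\le p\le d-1$ I would run the argument of Proposition \ref{prop:van}: assuming $h^0(X,\Omega^p_X\otimes\mathcal O_X(mL))\neq0$ for infinitely many $m$, let $\mathcal F\subseteq\Omega^p_X$ be the saturated subsheaf generated by the images of all maps $\mathcal O_X(-mL)\to\Omega^p_X$, put $r=\rk\mathcal F$, let $\mathcal O_X(-F)$ be the saturation of $\det\mathcal F$ in $\bigwedge^r\Omega^p_X$, and set $\mathcal Q=\bigwedge^r\Omega^p_X/\mathcal O_X(-F)$; then $\mathcal Q$ is torsion free and $h^0(X,\mathcal O_X(mL-F))\neq0$ for infinitely many $m$. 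Since $\mathcal Q$ is a torsion-free quotient of $(\Omega^1_X)^{\otimes rp}$ and $K_X$ is pseudoeffective, Theorem \ref{thm:CP11} shows $c_1(\mathcal Q)$ is pseudoeffective, and as $c_1(\bigwedge^r\Omega^p_X)$ is a multiple of $K_X\equiv0$, the divisor $F=c_1(\mathcal Q)$ is pseudoeffective. Now I invoke the structure of the cone of $X$: write $F=P(F)+N(F)$ for its divisorial Zariski decomposition, with $P(F)$ modified nef and $N(F)\ge0$ an effective $\mathbb R$-divisor; then $mL-P(F)\equiv N_m+N(F)$ is numerically equivalent to an effective divisor for infinitely many $m$, so $q(L,P(F))=-q(L,mL-P(F))\le0$, while $q(L,P(F))\ge0$ because $L$ is nef and $P(F)$ is pseudoeffective. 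Hence $q(L,P(F))=0$; since $L$ and $P(F)$ lie in the closure of the positive cone, on which $q$ is Lorentzian, and $L$ is a nonzero isotropic class, the light-cone inequality forces $P(F)\in\mathbb R_{\ge0}L$. Then $(m-\beta)L\equiv N_m+N(F)$ is numerically equivalent to an effective divisor for a fixed $\beta$ and all large $m$, so a positive multiple of $L$ is linearly equivalent to an effective divisor (using that $\mathrm{NS}(X)$ is torsion free and that a rational class in a rational polyhedral sub-cone of the effective cone is a rational combination of effective classes), contradicting $\kappa(X,L)=-\infty$.

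The ingredients that really carry the proof are not formal. One is the Riemann--Roch polynomial identity on hyperk\"ahler manifolds, which pins $\chi(X,\mathcal O_X(mL))$ to the nonzero constant $n+1$. The other, and the genuinely delicate point, is the input from the birational/cone theory of hyperk\"ahler manifolds used in the last paragraph: that a nef class pairs non-negatively with every pseudoeffective class — which rests on Boucksom's and Markman's description of the movable cone — and that the movable cone lies in the closure of the positive cone, so that the Lorentzian light-cone geometry of $q$ can be applied to the parabolic class $L$. I expect this last exclusion to be the main obstacle; note, however, that unlike the general Calabi--Yau case of Theorem \ref{thm:CYn-1} it requires no Minimal Model Program in lower dimensions, precisely because the cone of a hyperk\"ahler manifold is already well understood, while the hermitian semipositivity of $L$ is used once and essentially — to trivialise the multiplier ideals in the Hard Lefschetz step.
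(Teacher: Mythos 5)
The paper gives no proof of this statement---it is quoted from Verbitsky \cite{Ver10}---and the closest argument in the text, the theorem deduced from Corollary \ref{cor:nef}, assumes the existence of good models in dimensions at most $2n-1$. Your argument is therefore a genuinely different route, and I believe it is correct: you run the paper's own machinery (the Fujiki--Huybrechts Riemann--Roch identity forcing $\chi(X,\sO_X(mL))=n+1\neq 0$ for parabolic $L$; Theorem \ref{thm:DPS} with trivial multiplier ideal, which is exactly where hermitian semipositivity is used; and the Theorem \ref{thm:CP11}--based production of a pseudoeffective $F$ with $N_m+F\sim mL$ as in Proposition \ref{prop:van}), but you replace the MMP-dependent exclusion step (Theorem \ref{thm:MMPtwistCY}) by an unconditional argument using the Lorentzian geometry of the Beauville--Bogomolov form and Boucksom's divisorial Zariski decomposition. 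What this buys is precisely the removal of the inductive MMP hypothesis, recovering Verbitsky's nonvanishing by the methods of \cite{LP16,LOP16}; Verbitsky's original proof proceeds by quite different analytic considerations. Two points in your last paragraph should be justified more carefully, though both are fine: (a) the inequality $q(L,E)\geq 0$ for $L$ nef and $E$ effective is best obtained by polarizing the Fujiki relation, $q(\omega,E)\,q(\omega,\omega)^{n-1}=c\int_X\omega^{2n-1}E\geq 0$ for $\omega$ ample and then letting $\omega\to L$, rather than by appeal to the movable cone; and (b) the light-cone step needs $P(F)$ to lie in the closure of the positive cone $P(X)$, which holds because on a hyperk\"ahler manifold the modified nef cone coincides with the closure of the birational K\"ahler cone (Boucksom)---pseudoeffectivity of $P(F)$ alone would not suffice, since classes of negative square can be effective. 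With these in place the conclusion $P(F)\in\R_{\geq 0}L$ and the final rationality argument (the class of $L$ is a rational point of the rational polyhedral cone spanned by the components of $N_m+N(F)$, and numerical and linear equivalence agree on a projective hyperk\"ahler manifold) do yield the contradiction with $\kappa(X,L)=-\infty$.
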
 

Corollary \ref{cor:nef} yields the following in the case of {\it projective} hyperk\"ahler manifolds:

\begin{thm}
Let $X$ be a projective hyperk\"ahler manifold of dimension $2n$, and assume the existence of good minimal models for klt pairs in dimensions at most $2n-1$. Let $L$ be a nef parabolic line bundle on $X$.
 \begin{enumerate}
\item[(i)] If $L$ has a singular hermitian metric with semipositive curvature and with algebraic singularities, then $\kappa(X,L)\geq0$.
\item[(ii)] If $L$ is hermitian semipositive, then $L$ is semiample.
\end{enumerate} 
\end{thm}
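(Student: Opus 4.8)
The plan is to deduce both statements directly from Corollary \ref{cor:nef} applied to $X$; essentially all the work has already been done there, and the only point genuinely to be checked is the hypothesis $\chi(X,\mathcal O_X)\neq0$.

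First I would record the standard facts about a compact hyperk\"ahler $X$ of dimension $2n$. It carries a nowhere-degenerate holomorphic $2$-form $\sigma\in H^0(X,\Omega^2_X)$, and its top power $\sigma^n$ trivialises $\Omega^{2n}_X$, so that $K_X\simeq\mathcal O_X$; in particular $K_X\sim_\Q0$, and $X$, being smooth, is $\Q$-factorial and klt. Moreover, by the structure of the Hodge numbers of an irreducible holomorphic symplectic manifold (Beauville--Bogomolov), one has $H^0(X,\Omega^q_X)=0$ for $q$ odd and $H^0(X,\Omega^{2k}_X)=\C\cdot\sigma^{k}$ for $0\le k\le n$, whence $h^q(X,\mathcal O_X)=h^{0,q}(X)$ equals $1$ for $q$ even and $0$ for $q$ odd. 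Therefore
$$\chi(X,\mathcal O_X)=\sum_{q=0}^{2n}(-1)^q h^q(X,\mathcal O_X)=\sum_{k=0}^{n}1=n+1\neq0.$$

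With this in hand, $L$ is a nef divisor on the projective klt variety $X$ with $K_X\sim_\Q0$ and $\dim X=2n$, and the hypothesis grants good minimal models for klt pairs in dimensions at most $2n-1$, which is exactly the inductive assumption of Corollary \ref{cor:nef}. Hence, in case (i), $\mathcal O_X(L)$ has a singular hermitian metric with semipositive curvature current and algebraic singularities, so Corollary \ref{cor:nef}(i) combined with $\chi(X,\mathcal O_X)\neq0$ gives $\kappa(X,L)\geq0$; in case (ii), $\mathcal O_X(L)$ is hermitian semipositive, so Corollary \ref{cor:nef}(ii) gives that $L$ is semiample. Note that the parabolic hypothesis $q(L,L)=0$ is not used by this argument — in the non-parabolic case $L$ is big and semiampleness already follows from the basepoint-free theorem — but it singles out the case of genuine interest.

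There is no real obstacle beyond invoking the earlier machinery; the one substantive observation is the computation $\chi(X,\mathcal O_X)=n+1$, which is precisely what makes hyperk\"ahler manifolds tractable by the differential-forms approach of Theorem \ref{thm:nonvanishing} and Corollary \ref{cor:nef}, in contrast to abelian varieties where $\chi(X,\mathcal O_X)=0$ and the method truly fails. If one preferred not to cite Corollary \ref{cor:nef} as a black box, one would unwind it: assuming $\kappa(X,L)=-\infty$ for contradiction, Theorem \ref{thm:CYn-1} (applied to a resolution, which is $X$ itself) gives $H^0(X,\Omega^q_X\otimes\mathcal O_X(mL))=0$ for all $q$ and all $m\neq0$ sufficiently divisible; feeding this into the Hard Lefschetz surjectivity of Theorem \ref{thm:DPS} kills $H^q\big(X,\Omega^{2n}_X\otimes\mathcal O_X(mL)\big)=H^q\big(X,\mathcal O_X(mL)\big)$ for all $q$ and $m\gg0$, so the numerical polynomial $m\mapsto\chi(X,\mathcal O_X(mL))$ vanishes identically; evaluating at $m=0$ yields $\chi(X,\mathcal O_X)=0$, contradicting $n+1\neq0$. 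This proves (i), and (ii) follows by combining (i) with the semiampleness result of \cite{GM14} for hermitian semipositive line bundles of nonnegative Kodaira dimension.
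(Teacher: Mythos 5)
Your proposal is correct and follows exactly the paper's route: the theorem is stated there as a direct consequence of Corollary \ref{cor:nef}, and the only substantive point is that a hyperk\"ahler manifold of dimension $2n$ has $h^{0,q}=1$ for $q$ even and $0$ for $q$ odd, so $\chi(X,\mathcal O_X)=n+1\neq0$, which you verify correctly. Your optional unwinding of the corollary via Theorems \ref{thm:CYn-1} and \ref{thm:DPS} also matches the mechanism the paper describes for Theorem \ref{thm:B} and Corollary \ref{cor:nef}.
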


When a compact hyperk\"ahler manifold $X$ is not projective, recall that the \emph{algebraic dimension} $a(X)$ is the transcendence degree of the field of meromorphic functions $\C(X)$ over $\C$. In this context, the algebraic dimension of $X$ and $\kappa (X,L)$ are related: 

\begin{con} \label{nonalg} 
Let $X$ be a non-projective compact hyperk\"ahler manifold and let $L$ be a non-trivial nef line bundle on $X$. Then $a(X) = \kappa (X,L)$.
\end{con}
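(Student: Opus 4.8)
We sketch a possible line of attack, reducing Conjecture \ref{nonalg} via known structure results to an abundance/nonvanishing question for a single line bundle, and indicating where the real difficulty lies.

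\emph{Step 1: the numerical type of $L$, and one inequality.} Since $L$ is nef, $L^{2n}\geq0$; as $c_1(L)^{2n}=\lambda\,q(L,L)$ with $\lambda>0$ by Fujiki's relation, we get $q(L,L)\geq0$. If $q(L,L)>0$, then $L^{2n}>0$ and, $L$ being nef, $L$ is big, so $X$ is Moishezon and, being K\"ahler, projective --- contrary to hypothesis. Hence $q(L,L)=0$, i.e. $L$ is parabolic. Polarising Fujiki's relation one sees that a nontrivial parabolic class has numerical dimension exactly $n$; in particular $\kappa(X,L)\leq\nu(X,L)=n$ and $L^n\neq0$. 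Moreover, if $\kappa(X,L)\geq0$, the Iitaka fibration of $L$ is dominated by the algebraic reduction of $X$, so $\kappa(X,L)\leq a(X)$.

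\emph{Step 2: reduction to abundance for $L$.} Since $X$ is not projective, $a(X)\neq2n$; and by the structure theory of the algebraic reduction of hyperk\"ahler manifolds (Campana--Oguiso--Peternell) $a(X)\in\{0,n,2n\}$, hence $a(X)\in\{0,n\}$. By Step 1 it then suffices to prove: (A) if $a(X)=n$ then $\kappa(X,L)=n$, and (B) if $a(X)=0$ then $\kappa(X,L)=0$. Now (A) is exactly the equality $\kappa(X,L)=\nu(X,L)=n$, i.e. abundance for $L$ in the sharp form of Conjecture \ref{con:para}; and (B), since $\kappa(X,L)\leq a(X)=0$, reduces to the nonvanishing $\kappa(X,L)\geq0$. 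As soon as one knows $\kappa(X,L)\geq1$, the above dichotomy forces $a(X)=n$ and we are back in case (A); so (B) is expected to be vacuous, and the conjecture comes down to proving $\kappa(X,L)=\nu(X,L)=n$ for a nontrivial nef parabolic line bundle on a non-projective hyperk\"ahler manifold.

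\emph{Step 3: attacking the main case $a(X)=n$.} Two complementary routes present themselves. The first is to prove Conjecture \ref{con:SYZ} for $L$ directly: semiampleness yields a surjective morphism $\varphi\colon X\to B$ with connected fibres and $0<\dim B<2n$, and by the hyperk\"ahler analogue of Matsushita's theorem \cite{Mat99,Mat01} one gets $\dim B=n$ with $\varphi$ Lagrangian, so that $\kappa(X,L)=\nu(X,L)=n$; together with the easy inequality this also gives $a(X)=n$. The second route avoids semiampleness: assuming $a(X)=n$, realise the algebraic reduction as an almost holomorphic map $r\colon X\dashrightarrow B$ with $B$ projective of dimension $n$, resolve it to a fibration $g\colon\widetilde X\to B$, and observe that a very general fibre of $g$ maps isomorphically onto a compact submanifold $F\subseteq X$ with $a(F)=0$. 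One then shows that $L|_F$ is numerically trivial --- which should follow from the expected fact that a nef line bundle on a compact K\"ahler manifold of algebraic dimension zero is numerically trivial --- and a rigidity argument, using $H^1(X,\OO_X)=0$ so that numerically trivial line bundles are trivial, shows that a multiple of $L$ is linearly equivalent to the pullback of a $\Q$-divisor $M$ on $B$; since $L^n\neq0$ we get $M^n>0$, so $M$ is big and $\kappa(X,L)=n$.

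\emph{Step 4: the main obstacle.} In both routes the heart of the matter --- nonvanishing, and semiampleness in the first route --- is precisely what is missing: the machinery of \cite{LP16}, namely Theorems \ref{thm:nonvanishing}, \ref{thm:CP11} and \ref{thm:MMPtwistCY}, rests on the Minimal Model Program, which is not available for non-projective compact K\"ahler manifolds. Thus one would need either the still-conjectural K\"ahler MMP, applied to a resolution, or a fully transcendental substitute for the nonvanishing step, combining singular Hermitian metrics, the Hard Lefschetz Theorem \ref{thm:DPS} and $L^2$-estimates in the spirit of Section \ref{sec:abundance}; the second route additionally needs the auxiliary ``nef plus algebraic dimension zero implies numerically trivial'' statement together with a careful analysis of the almost holomorphic algebraic reduction. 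Removing the dependence on the Minimal Model Program outside the projective category --- the very difficulty underlying Conjectures \ref{conj:Ab1} and \ref{con:SYZ} --- is what makes Conjecture \ref{nonalg} hard.
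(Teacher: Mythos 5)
The statement you are addressing is Conjecture \ref{nonalg}: the paper offers no proof of it, only the remarks that it follows from Matsushita's results once $L$ is known to be semiample, and that \cite{COP10} establishes it when $a(X)=n$ and when $L$ is hermitian semipositive. So your proposal can only be judged as a reduction programme, and as such it is broadly sensible: Step 1 is correct and standard (every nef line bundle on a non-projective compact hyperk\"ahler manifold is parabolic, a nontrivial parabolic nef class has $\nu(X,L)=n$ by the Fujiki relation, and $\kappa(X,L)\le a(X)$ always), and Step 4 correctly locates the obstruction in the absence of a K\"ahler MMP.

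Two points, however, need correcting. First, Step 2 is circular as stated: the trichotomy $a(X)\in\{0,n,2n\}$ is precisely Conjecture \ref{algdim}, which the paper presents as a \emph{consequence} of Conjectures \ref{con:para} and \ref{nonalg}; unconditionally \cite{COP10} gives only $a(X)\le n$, and excluding the intermediate values $1,\dots,n-1$ requires either $\dim X=4$ or the K\"ahler MMP in dimension at most $2n-1$. So you cannot invoke $a(X)\in\{0,n\}$ as known ``structure theory''; the honest unconditional split is $a(X)=n$ versus $0\le a(X)\le n-1$. Second, your case (A) is not open: the theorem from \cite{COP10} quoted at the end of the hyperk\"ahler subsection states that if $a(X)=n$ then $L$ is semiample, whence $\kappa(X,L)=n=a(X)$ by Matsushita's theorem --- the first route of your Step 3 is already carried out in the literature, and your programme overstates what remains. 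The genuinely open content of Conjecture \ref{nonalg} is the nonvanishing $\kappa(X,L)=a(X)$ when $a(X)<n$ (conjecturally only $a(X)=0$ occurs); note also that your claim that case (B) ``is expected to be vacuous'' is not a deduction from your dichotomy but is itself equivalent to a piece of Conjecture \ref{con:SYZ}. Finally, in the second route of Step 3 the assertion that the very general fibre $F$ of the algebraic reduction satisfies $a(F)=0$ is not automatic (fibres of an algebraic reduction may carry meromorphic functions that do not extend to the total space) and would need the specific structure results of \cite{COP10} for hyperk\"ahler algebraic reductions.
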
 

If $L$ semiample, Conjecture \ref{nonalg}  follows from Matushita's results mentioned above. 

Conjectures  \ref{con:para} and \ref{nonalg} imply 

\begin{con} \label{algdim} 
Let $X$ be a compact hyperk\"ahler manifold of dimension $2n$. Then the algebraic dimension $a(X) $ takes only the values $0$, $n$ and $2n.$ 
\end{con}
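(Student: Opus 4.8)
The plan is to deduce Conjecture \ref{algdim} from Conjectures \ref{con:para} and \ref{nonalg}. First I would dispose of the easy cases: if $X$ is projective then $a(X)=\dim X=2n$, and if $X$ is non-projective with $a(X)=0$ there is nothing to prove. So the whole content is the case in which $X$ is non-projective and $a(X)\geq1$, and the goal is to show $a(X)=n$. Since both the algebraic dimension and the hyperk\"ahler condition are bimeromorphic invariants, I am free to replace $X$ by any bimeromorphic hyperk\"ahler model during the argument.

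The crux is to manufacture a non-trivial nef parabolic line bundle on such a model. Because $a(X)\geq1$, there is a non-constant $f\in\C(X)$; its polar divisor $D:=(f)_\infty$ is a non-zero effective divisor, and since $(f)_0$ and $(f)_\infty$ have disjoint support, the members $mD=m(f)_\infty$ and $m(f)_0$ of $|mD|$ share no component, so $|mD|$ has no fixed part for every $m\geq1$. Hence $D$ is a movable divisor and $0\neq[D]\in\overline{\Mov}(X)$. I would then invoke the birational geometry of hyperk\"ahler manifolds: every bimeromorphic model of a hyperk\"ahler manifold is again hyperk\"ahler, isomorphic in codimension one, with the same N\'eron-Severi lattice $N^1$ and Beauville-Bogomolov-Fujiki form $q$, and $\overline{\Mov}(X)$ is the union of the birational transforms of the nef cones of these models (the global Torelli theorem and the description of the movable cone underlying Theorems \ref{thm:TorelliHK} and \ref{thm:SterkMarkman}, in the form due to Huybrechts and Boucksom). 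Applied to $[D]$, this produces a bimeromorphic hyperk\"ahler manifold $X'$ on which the strict transform $L$ of $D$ is nef; moreover $X'$ is again non-projective, $a(X')=a(X)$, and $L\neq0$ since it is the class of a non-zero effective divisor. Finally, $L$ is automatically \emph{parabolic}: being nef, $L$ lies in the closure of the K\"ahler cone of $X'$, so $q(L,L)\geq0$, whereas non-projectivity of $X'$ forces $q$ to be negative semidefinite on $N^1(X')_\R$ (Huybrechts' projectivity criterion together with Fujiki's formula); therefore $q(L,L)=0$.

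Now $(X',L)$ is a non-projective hyperk\"ahler manifold carrying a non-trivial nef parabolic line bundle, so Conjecture \ref{con:para} gives that $L$ is semiample with $\kappa(X',L)=n$, and Conjecture \ref{nonalg} gives $a(X')=\kappa(X',L)$. Hence $a(X)=a(X')=\kappa(X',L)=n$, and together with the projective case this yields $a(X)\in\{0,n,2n\}$. I expect the main obstacle to be the middle step: representing the boundary class $[D]$---which lies on the wall $q=0$ of the positive cone rather than in its interior---as a nef class on a bimeromorphic hyperk\"ahler model, and verifying that non-projectivity, the lattice $N^1$ and the form $q$ are all preserved under such bimeromorphic maps; this draws on the full strength of the global Torelli theorem and of the structure of the movable cone of hyperk\"ahler manifolds. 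The two remaining inputs---the elementary extraction of a non-trivial movable divisor from $a(X)\geq1$, and the implication that a nef divisor on a non-projective hyperk\"ahler manifold is parabolic---are routine.
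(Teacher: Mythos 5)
Your reduction to the case ``$X$ non-projective with $a(X)\geq 1$'', the extraction of a non-trivial movable class $[D]$ from a non-constant meromorphic function, and the observation that any non-trivial nef line bundle on a non-projective hyperk\"ahler manifold is automatically parabolic (Huybrechts' projectivity criterion forces $q\leq 0$ on $N^1$, nefness forces $q\geq 0$) are all correct, and this is surely the shape of the implication ``Conjectures \ref{con:para} $+$ \ref{nonalg} $\Rightarrow$ Conjecture \ref{algdim}'' that the text asserts without proof. But the step you yourself flag as ``the main obstacle'' is a genuine gap, not a technicality to be discharged by the global Torelli theorem. First, a small but real error: it is false that every bimeromorphic model of a hyperk\"ahler manifold is again hyperk\"ahler (blow up a point); what is true is that two hyperk\"ahler manifolds which happen to be bimeromorphic are isomorphic in codimension one. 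What you actually need is the \emph{existence} of a hyperk\"ahler model $X'$ on which $[D]$ becomes nef, and no known result supplies this here. The Huybrechts--Boucksom--Markman description underlying Theorem \ref{thm:TorelliHK} decomposes the part of the movable cone lying in the \emph{open} positive cone into pullbacks of K\"ahler/ample cones of models; your class is isotropic --- indeed, since $X$ is non-projective, $q$ is negative semidefinite on all of $N^1(X)_\R$, so \emph{every} movable integral class sits on the wall $q=0$, exactly where the chamber decomposition produces no model. Moreover, in the non-projective setting there is no MMP for the pair $(X,\varepsilon D)$ to fall back on, and even in the projective setting ``an isotropic movable class is nef on some model'' is an open problem of essentially the same depth as the SYZ Conjecture \ref{con:para} itself. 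So your argument proves only the weaker statement: if $X$ is non-projective and carries a non-trivial nef line bundle, then $a(X)=n$; the case ``$a(X)\geq 1$ but no non-trivial nef class on any hyperk\"ahler model'' remains unhandled.

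For comparison: the survey gives no argument for this implication, and the actual unconditional progress on Conjecture \ref{algdim} in \cite{COP10} avoids the issue entirely by working with the algebraic reduction $X\dashrightarrow B$ directly --- proving $a(X)\leq n$ and then analysing (minimal models of) the fibres in dimension $\leq 2n-1$ --- rather than by producing a nef class on a model of $X$. If you want to salvage your route, the honest fix is either to add the hypothesis that a non-projective $X$ with $a(X)\geq 1$ admits a non-trivial nef line bundle on some hyperk\"ahler model, or to restate Conjectures \ref{con:para} and \ref{nonalg} for movable (rather than nef) isotropic classes.
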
 

By \cite{COP10}, Conjecture \ref{algdim} holds, provided that any compact K\"ahler manifold $Y$ with $\dim Y \leq 2n - 1$ and $a(Y)= \kappa(Y)=0 $ has a minimal model. In dimension $3$ this assumption is true by \cite{CHP16}, so that Conjecture \ref{algdim} holds in dimension $4$. The paper \cite{COP10} actually proves Conjecture \ref{algdim} in dimension $4$ independently of the K\"ahler MMP in dimension $3$ and also proves that $a(X) \leq n$. Moreover, the following result is shown:

\begin{thm} 
Let $X$ be a non-projective compact hyperk\"ahler manifold of dimension $2n$. Let $L$  be a non-trivial nef line bundle on $X$.
\begin{enumerate} 
\item If $a(X) = n$, then $L$ is semiample.
\item If $L$ is hermitian semipositive, then $a(X) = \kappa(X,L)$.
\end{enumerate} 
\end{thm}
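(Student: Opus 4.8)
The plan is to reduce everything to the \emph{parabolic} case $q(L,L)=0$ and then to read off the Kodaira dimension and the semiampleness of $L$ from the algebraic reduction of $X$. Since $X$ is not projective while $L$ is nef and non-trivial, $L$ cannot lie in the interior of the positive cone: otherwise Fujiki's formula $c_1(L)^{2n}=\lambda q(L,L)$ with $\lambda>0$ would force $L$ to be big, so $X$ would be Moishezon and hence projective. Thus $q(L,L)=0$, i.e.\ $L$ is parabolic. Applying Fujiki's formula once more to a monomial $c_1(L)^{n+1}\cdot\alpha_1\cdots\alpha_{n-1}$ with $\alpha_i\in H^{1,1}(X)$, every term contains a factor $q(L,L)=0$ by the pigeonhole principle, so $L^{n+1}=0$; on the other hand $L^n\cdot\alpha^n\neq0$ for a class $\alpha$ with $q(L,\alpha)\neq0$, so $\nu(X,L)=n$. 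This will be used repeatedly.

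\emph{Part (1).} Since $a(X)=n$, the algebraic reduction of $X$ is a dominant map onto a projective $n$-fold $B$. The crucial geometric input, which I would take from \cite{COP10} (building on Matsushita's theorem \cite{Mat99,Mat01}), is that when $a(X)=n$ this map can be realised holomorphically: one obtains a fibration $g\colon X\to B$ which, by Matsushita, is Lagrangian, with $B$ a normal projective Fano variety of Picard number one and with general fibre a complex torus $F$ of dimension $n$. Next I would show $L\equiv g^{*}D$ for a divisor $D$ on $B$: the general fibre satisfies $a(F)=0$ and $L|_F$ is nef on $F$, so if $c_1(L|_F)\neq0$ the kernel of the associated semipositive hermitian form defines a subtorus $K\subsetneq F$ with $F/K$ a positive-dimensional abelian variety, forcing $a(F)\geq\dim(F/K)\geq1$, a contradiction; hence $c_1(L|_F)=0$, so $L$ is numerically trivial on the general fibre of $g$ and therefore $L\equiv g^{*}D$ with $D$ nef on $B$. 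Because $\rho(B)=1$ and $B$ is Fano, $D$ is a non-negative multiple of the ample generator, hence ample; and since $H^{1}(X,\mathcal O_X)=0$ and $H^{2}(X,\mathbb Z)$ is torsion-free, numerical and linear equivalence of line bundles coincide on $X$, so a suitable multiple $mL$ is the pullback of an ample divisor on $B$. Thus $L$ is semiample.

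\emph{Part (2).} The inequality $\kappa(X,L)\le a(X)$ is automatic, because the Kodaira dimension of any line bundle is at most the algebraic dimension of the variety. For the reverse inequality: when $a(X)=n$ it follows from part (1), since then $\kappa(X,L)=\nu(X,L)=n$. When $a(X)=0$ one must show $\kappa(X,L)\geq0$; arguing by contradiction I would first prove the vanishing
$$H^{0}\big(X,\Omega^{p}_X\otimes\mathcal O_X(mL)\big)=0\quad\text{for }0\le p<2n\text{ and all }m\gg0,$$
which is the hyperkähler counterpart of Theorem \ref{thm:CYn-1}, obtained however \emph{without} assuming the Minimal Model Program in lower dimensions, using instead the Beauville--Bogomolov--Fujiki form and the deformation theory of hyperkähler manifolds in the spirit of Verbitsky. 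Since $L$ is hermitian semipositive its metric $h$ is smooth with $\mathcal I(h)=\mathcal O_X$, and $K_X=\mathcal O_X$, so Theorem \ref{thm:DPS} upgrades this vanishing to $H^{q}(X,\mathcal O_X(mL))=0$ for all $q\geq1$ and $m\gg0$. Hence $h^{0}(X,\mathcal O_X(mL))=\chi(X,\mathcal O_X(mL))$, which by Riemann--Roch on a hyperkähler manifold depends only on $q(mL,mL)=0$ and therefore equals $\chi(X,\mathcal O_X)=n+1>0$, contradicting $\kappa(X,L)=-\infty$. When $0<a(X)<n$ I would combine the two: the smooth semipositive curvature form of $L$ has generic rank $n$ (its $(n{+}1)$-st power vanishes identically by parabolicity, while its $n$-th power does not since $\nu(X,L)=n$), so its kernel defines a holomorphic Lagrangian foliation, and comparing this foliation with the fibres of the algebraic reduction identifies $\kappa(X,L)$ with $\dim B=a(X)$.

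\emph{Main obstacle.} The analytic ingredients above (Fujiki's formula, Hard Lefschetz, Riemann--Roch, Hodge theory on the torus fibres) are routine. The genuine difficulty, and the heart of \cite{COP10}, is the structure theory that makes both parts go through: the holomorphy of the algebraic reduction when $a(X)=n$, and the control of the null foliation of a hermitian semipositive parabolic class when $a(X)<n$. In contrast to the higher-dimensional abundance statements above, all of this is established independently of the Minimal Model Program.
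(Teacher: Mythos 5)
The survey does not actually reproduce a proof of this theorem --- it is quoted from \cite{COP10} --- so your sketch has to stand on its own. Your preliminary reductions are correct: non-projectivity forces $q(L,L)=0$ via Fujiki's formula and Moishezon's theorem, $\nu(X,L)=n$ follows from the polarised Fujiki relations, and $\kappa(X,L)\le a(X)$ is standard. But part (1) contains a false step: the general fibre $F$ of the algebraic reduction need not satisfy $a(F)=0$. Already for $n=1$, a non-projective K3 surface with $a(X)=1$ is elliptically fibred and its general fibre is an elliptic curve, so your argument for $c_1(L|_F)=0$ collapses at the first move. The correct mechanism is the Beauville--Bogomolov--Fujiki form itself: setting $A=g^*H$ for $H$ ample on $B$, one has $q(A,A)=0$, and $q(L,A)>0$ would make $L+A$ nef with positive square, hence big, hence $X$ projective; therefore $q(L,A)=0$, and the Lorentzian signature of $q$ on $H^{1,1}(X,\R)$ forces $L$ to be a positive rational multiple of $A$, whence some $mL=g^*(m'H)$ (using $H^1(X,\OO_X)=0$ and torsion-freeness of $H^2(X,\Z)$) is semiample. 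This also removes your unsupported appeals to $\rho(B)=1$ and $B$ being Fano, which are Matsushita/Hwang-type statements established for projective $X$.

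Part (2) has a more serious problem. The vanishing $H^0\big(X,\Omega^p_X\otimes\OO_X(mL)\big)=0$ for $p<2n$ and $m\gg0$ is asserted ``in the spirit of Verbitsky'' with no argument; this is precisely the hard content, not a routine step. Worse, in the case $a(X)=0$ your endgame is internally inconsistent: if that vanishing held, Theorem \ref{thm:DPS} and Riemann--Roch would give $h^0(X,mL)=\chi(X,\OO_X)=n+1\ge 2$, producing a non-constant meromorphic function and hence $a(X)\ge1$ --- contradicting the hypothesis rather than establishing $\kappa(X,L)=0=a(X)$. So either no such $L$ exists when $a(X)=0$ (which would itself require proof) or the vanishing fails; in neither case does your argument prove the claim. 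Finally, the intermediate case $0<a(X)<n$, which is not known to be vacuous for general $n$, is dispatched in one sentence about the ``Lagrangian foliation'' of the curvature form: a semipositive form of generic rank $n$ does not define a foliation without a constant-rank hypothesis, and no actual comparison with the (merely meromorphic) algebraic reduction is carried out. These are exactly the points where the real work of \cite{COP10} lies.
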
 

For further recent results on hyperk\"ahler manifolds, we refer to \cite{AC13,Hw08,GLR13,GLR14}. 

\subsection{Strictly nef line bundles} \label{ss:strict} 

Finally, we quickly address a question which was implicit in several considerations above.

\begin{dfn} 
Let $X$ be a normal projective variety. A divisor $L$ on $X$ is \emph{strictly nef} if $ L \cdot C > 0$ for all irreducible curves $C$ on $X$. 
\end{dfn}

In general, a strictly nef divisor need not be ample: a classical example was found by Mumford, see for instance \cite[p.\ 56]{Ha70}; and if additionally $\kappa(X,L)\geq0$, a counterexample was given by Ramanujam \cite[pp.\ 57-58]{Ha70}. However, it is expected that \emph{adjoint} strictly nef divisors should be ample:

\begin{con} \label{strict} 
Let $X$ be a projective manifold of dimension $n$ and let $L$ be a strictly nef divisor on $X$. Then $K_X + tL$  is ample for $t >  n+1$. 
\end{con}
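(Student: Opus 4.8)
The plan is to combine Kleiman's ampleness criterion with the Cone Theorem to reduce the conjecture to a positivity statement on the boundary of the Mori cone $\NEb(X)$, and then to attack that residual statement using the Minimal Model Program together with the abundance-type results for $K$-trivial varieties discussed in Section~\ref{sec:abundance}.

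I would first dispose of the formal part. If $K_X+tL$ fails to be nef for some $t$, the Cone Theorem produces a $K_X$-negative extremal ray $R$ with $(K_X+tL)\cdot R<0$, and by Mori's bend-and-break $R$ is spanned by a rational curve $C$ with $0<-K_X\cdot C\le n+1$; since $L$ is an integral strictly nef divisor, $L\cdot C\ge1$, so $0>(K_X+tL)\cdot C\ge t-(n+1)$, hence $t<n+1$. Therefore $K_X+tL$ is nef for every $t\ge n+1$. Now fix $t>n+1$; by Kleiman it suffices to prove $(K_X+tL)\cdot z>0$ for all $z\in\NEb(X)\setminus\{0\}$, and nefness already gives $\ge0$. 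If $(K_X+tL)\cdot z=0$, write $z=z'+\sum_i a_ic_i$ as in the Cone Theorem, with $z'\in\NEb(X)_{K_X\ge0}$, $a_i\ge0$ and $c_i$ rational curves spanning the $K_X$-negative extremal rays; then $(K_X+tL)\cdot z'\ge0$ and $(K_X+tL)\cdot c_i\ge t-(n+1)>0$, forcing all $a_i=0$, so $K_X\cdot z\ge0$, and then $0=K_X\cdot z+tL\cdot z$ forces $K_X\cdot z=L\cdot z=0$. Thus for any $t>n+1$, the divisor $K_X+tL$ is ample if and only if there is no nonzero class $z\in\NEb(X)$ with $K_X\cdot z=L\cdot z=0$; since $L$ is strictly positive on every nonzero $K_X$-negative class of $\NEb(X)$ (again by the Cone Theorem decomposition), this is the same as asking that $K_X$ be strictly positive on the null face $L^\perp\cap\NEb(X)$. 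Note the bound $t>n+1$ is sharp: on $\PS^n$ with $L=\sO(1)$ the divisor $K_X+(n+1)L$ is trivial.

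This residual statement is the heart of the problem, and is where genuine input is needed: were the offending class $z$ the class of an irreducible curve we would contradict strict nefness of $L$, but in general $z$ is only a limit of curve classes, so ruling it out is an abundance-flavoured, not a formal, matter. I would argue by cases on $K_X$. If $K_X$ is nef, abundance in dimension $n$ (known for $n\le3$) makes $K_X$ semiample, say $K_X=f^*A$ with $A$ ample on a base $Y$; then $K_X^\perp\cap\NEb(X)$ is the relative cone $\NEb(X/Y)$, and our task becomes showing that $L$ is $f$-ample. The general fibre $F$ of $f$ satisfies $K_F\sim_\Q0$, and $L|_F$ is a strictly nef divisor on this $K$-trivial variety, hence semiample by Conjecture~\ref{conj:Ab1} (or its singular version), hence ample; so $L$ is ample on the general fibre of $f$, and one then has to propagate this to the special fibres to conclude $f$-ampleness, which contradicts the existence of $z$. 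If $K_X$ is not pseudoeffective, then $X$ is uniruled, and one would run a bend-and-break argument along the face $L^\perp\cap\NEb(X)$ to manufacture an honest rational curve of $L$-degree zero, again contradicting strict nefness. The remaining case, $K_X$ pseudoeffective but not nef, one would reduce to the previous two by a suitable Minimal Model Program, using that "strictly nef $\Rightarrow$ ample" is expected on minimal models of Calabi-Yau type.

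The main obstacle is precisely this last step: controlling the invisible boundary classes of $\NEb(X)$ annihilated by $L$. Even in the most structured situation it reduces to showing that a strictly nef divisor on a $K$-trivial fibre cannot fail to be semiample — the abundance problem for nef divisors on $K$-trivial varieties, open already for Calabi-Yau threefolds — together with a control of the possibly bad special fibres of the abundance morphism. Accordingly, without such input the conjecture is presently accessible only in low dimensions or under extra positivity hypotheses on $L$ (for instance $L$ hermitian semipositive), and a complete proof appears to need exactly the semiampleness results outlined above rather than the Cone Theorem alone.
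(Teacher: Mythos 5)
The statement you set out to prove is presented in the paper as a \emph{conjecture}; the paper offers no proof, only the partial results of Serrano \cite{Se95} and \cite{CCP08} for surfaces, threefolds, and large Kodaira dimension. Your formal reduction is correct and reproduces exactly the known part: the Cone Theorem gives nefness of $K_X+(n+1)L$ (as the paper records), and Kleiman's criterion together with the cone decomposition reduces ampleness of $K_X+tL$ for $t>n+1$ to excluding a nonzero class $z\in\NEb(X)$ with $K_X\cdot z=L\cdot z=0$ --- which is precisely the paper's remark that $L^\perp\cap\partial\NEb(X)$ should contain the class of an irreducible curve. Up to that point your argument is sound.

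The remainder is not a proof, and to your credit you say so; but the specific routes you sketch also have concrete defects beyond merely invoking open conjectures. (a) In the case $K_X$ nef you need abundance in dimension $n$ (open for $n\geq4$) and then Conjecture \ref{conj:Ab1} for the $K$-trivial fibres, which Section \ref{sec:abundance} explains is open already for Calabi-Yau threefolds exactly in the critical case $L\cdot c_2(X)=0$; moreover, even granting both, ampleness of $L$ on the \emph{general} fibre does not yield $f$-ampleness, since a special fibre could contain an $L$-trivial curve, and nothing you propose rules this out. (b) In the uniruled case, bend-and-break produces rational curves of bounded $K_X$-degree through points, but by your own reduction the offending classes lie in $\NEb(X)_{K_X\geq0}$, so there is no mechanism steering the produced curves into the face $L^\perp$. (c) The MMP reduction does not go through as stated: the steps of a $K_X$-MMP contract $K_X$-negative extremal rays, on which $L$ is strictly positive, so $L$ does not descend to the target variety, and strict nefness is not transported along the program. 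The residual statement you isolate is exactly the open content of the conjecture, and none of the three cases closes it.
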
 

For a projective manifold $X$ with $K_X \equiv 0 $, strict nefness and ampleness should therefore coincide. Thus, if $K_X \equiv 0 $ and if $L$ is a nef divisor which is not ample, then Conjecture \ref{strict} predicts that 
$$ L^\perp \cap \partial \NEb(X) $$ 
should contain the class of an irreducible curve. 

Serrano \cite{Se95} established the conjecture for $n = 2$. The paper \cite{CCP08} gives a solution when $\kappa (X) \geq \dim X-2$.  For threefolds, \cite{Se95,CCP08} prove the following:

\begin{thm} 
Let $X$ be a smooth projective threefold and let $L$ be a strictly nef divisor on $X$. Then $K_X + 4L$ is ample, unless possibly when $X$ is a Calabi-Yau threefold 
and $L \cdot c_2(X) = 0$. 
\end{thm}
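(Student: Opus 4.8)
The plan is to prove first that $K_X+4L$ is nef, then that it is in fact strictly nef apart from one exceptional case, and finally to upgrade strict nefness to ampleness via the Nakai--Moishezon criterion, splitting the last step according to $\kappa(X)$ with the Calabi--Yau case handled by Proposition~\ref{prop:c2}. For nefness: if $K_X+4L$ were not nef, the Cone Theorem would produce a $K_X$-negative extremal ray generated by a rational curve $C$ with $0<-K_X\cdot C\le\dim X+1=4$ and $(K_X+4L)\cdot C<0$; but $L$ is a strictly nef integral divisor, so $L\cdot C\ge1$, whence $(K_X+4L)\cdot C=K_X\cdot C+4(L\cdot C)\ge-4+4=0$, a contradiction. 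Since $K_X+4L$ is also nonnegative on the $K_X$-nonnegative part of $\NEb(X)$ (because $L$ is nef), it is nef.

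Next I would show $K_X+4L$ is strictly nef unless $X\cong\mathbb{P}^3$. If $(K_X+4L)\cdot C=0$ for an irreducible curve $C$, then $K_X\cdot C=-4(L\cdot C)\le-4$, so $[C]$ is not in the $K_X$-nonnegative part of $\NEb(X)$; writing $[C]$ as the sum of a $K_X$-nonnegative class and positive multiples of $K_X$-negative extremal classes $\Gamma_i$, nefness of $K_X+4L$ forces $(K_X+4L)\cdot\Gamma_i=0$ for some $i$, i.e.\ $-K_X\cdot\Gamma_i=4(L\cdot\Gamma_i)\ge4=\dim X+1$. A $K_X$-negative extremal contraction of a smooth projective threefold admits a rational curve of anticanonical degree $\dim X+1$ only when it contracts $X$ to a point with $X\cong\mathbb{P}^3$ (by the length estimates for extremal contractions of threefolds together with the Cho--Miyaoka--Shepherd-Barron characterisation of projective space), and then $L=\mathcal{O}_{\mathbb{P}^3}(1)$ and $K_X+4L\sim0$. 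So, assuming $X\not\cong\mathbb{P}^3$, $K_X+4L$ is strictly nef, and by Nakai--Moishezon it remains to prove $(K_X+4L)^2\cdot S>0$ for every irreducible surface $S\subseteq X$ and $(K_X+4L)^3>0$.

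For this I would split on whether $K_X$ is pseudoeffective. If $K_X$ is pseudoeffective then, by nonvanishing in dimension three, $\kappa(X)\ge0$, and there are two subcases. When $\kappa(X)\ge1$ I would follow \cite{CCP08}: restrict to general fibres of the Iitaka fibration and of Mori contractions, where $L$ stays strictly nef on a variety of dimension $\le2$, invoke Serrano's surface case \cite{Se95} and adjunction, and use positivity of $K_X$ along the base to conclude $(K_X+4L)|_S$ and $K_X+4L$ are big, hence ample. When $\kappa(X)=0$, abundance for threefolds and the Beauville--Bogomolov decomposition of an \'etale cover of a minimal model reduce the problem (abelian and product factors being easy, since strictly nef divisors on abelian varieties are ample) to $X$ a Calabi--Yau threefold; there $K_X+4L=4L$, and Proposition~\ref{prop:c2} finishes it: if $L\cdot c_2(X)\ne0$ then $L$ is semiample and, $L$ being strictly nef, the morphism defined by $|mL|$ is finite, so $L$, and hence $K_X+4L=4L$, is ample. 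The residual possibility, $X$ Calabi--Yau with $L\cdot c_2(X)=0$, is exactly the exception. If instead $K_X$ is not pseudoeffective then $X$ is uniruled, and I would argue from the structure of its Mori fibre space (tracking $L$ through the program and restricting to the Fano fibres as above), leaving $X\cong\mathbb{P}^3$ with $L=\mathcal{O}(1)$ as a further degenerate case.

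The main obstacle will be the non-big configurations, i.e.\ those with $(K_X+4L)^3=0$ or $(K_X+4L)^2\cdot S=0$ for some surface $S$: then there is no numerical slack and one must extract a fibration structure from strict nefness and the restriction inequalities alone. On Calabi--Yau threefolds this is precisely where Miyaoka's $c_2(X)\in\NEb(X)$ and Riemann--Roch enter, through Proposition~\ref{prop:c2}, and it is also the reason the case $L\cdot c_2(X)=0$ cannot be removed with present techniques.
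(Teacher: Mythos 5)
The paper itself gives no proof of this statement --- it is quoted from \cite{Se95,CCP08} --- so there is no internal argument to compare against; your outline does follow the strategy of those references (Cone Theorem and the length bound $-K_X\cdot C\le 4$ for nefness, Nakai--Moishezon plus a case division by Kodaira dimension, the Calabi--Yau case via Proposition \ref{prop:c2}), and your first two steps are correct as written. But your own second step exposes a genuine problem with the statement as printed: for $X=\PS^3$ and $L=\OO_{\PS^3}(1)$, which is strictly nef, one has $K_X+4L\sim 0$, which is not ample, and $\PS^3$ is not a Calabi--Yau threefold. So the theorem with coefficient exactly $4$ is false; consistently with Conjecture \ref{strict} and with what \cite{Se95,CCP08} actually prove, it should read $K_X+tL$ for $t>4$, or else $(\PS^3,\OO(1))$ must be listed among the exceptions. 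You should have drawn that conclusion rather than deferring $\PS^3$ as ``a further degenerate case'': as it stands your argument cannot close because the target statement, read literally, is not true.

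The second gap is that the cases carrying essentially all the content of \cite{Se95,CCP08} are only gestured at. For $\kappa(X)\ge 1$ you must verify $(K_X+4L)^2\cdot S>0$ for \emph{every} irreducible surface $S$, not just for general fibres of the Iitaka fibration, and this rests on Serrano's lemma that non-ampleness of $K_X+tL$ for some $t>4$ forces $K_X^j\cdot L^{3-j}=0$ for all $j$, followed by real work; the uniruled case requires a case analysis of conic bundles, del Pezzo fibrations and Fano threefolds, not merely ``restricting to the Fano fibres''. Your reduction of the $\kappa(X)=0$ case is also unjustified: strict nefness of $L$ is not obviously preserved under divisorial contractions or flips, the minimal model may be singular, and the Beauville--Bogomolov decomposition is applied after passing to an \'etale cover of a smooth minimal model to which $L$ does not descend; one must first show that $X$ is already minimal (or argue directly that $K_X\equiv 0$ using $K_X^j\cdot L^{3-j}=0$ and abundance) before that reduction is available. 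None of these steps is automatic, and they are precisely what the cited papers supply.
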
 

Note also that, given $L$ strictly nef, then $K_X + (n+1) L $ is nef by Mori's Cone theorem, and if $K_X + tL$ is not ample for some $t > n+1$, then \cite{Se95} showed that
$$ K_X^j \cdot L^{n-j} = 0 $$
for all $0 \leq j  \leq n$. 

In higher dimensions, not much is known about Conjecture \ref{strict}. Special interesting cases are those when $L = -K_X$ -- then $X$ should be Fano; and when $L = K_X$ -- then $K_X$ should be ample, which is also a consequence of the abundance conjecture. 

\bibliographystyle{amsalpha}

\bibliography{biblio}

\end{document}